\documentclass[leqno]{amsart}
\usepackage{amssymb,yhmath}
\usepackage{mathrsfs}
\usepackage{amsmath, amsfonts, vmargin, enumerate}
\usepackage{color}
\usepackage{verbatim}
\usepackage{amsthm}
\usepackage{latexsym, bm,ulem}

\usepackage{euscript}
\usepackage{dsfont}

\usepackage{lmodern} 
 
 \newcommand{\tens}{\otimes}
  \newcommand{\plB}[3]{\mathring{#1}_{#2_1}\otimes_{\m B}\cdots\otimes_{\m B}
 \mathring{#1}_{#2_{#3}}}
 \newcommand{\un}{\underline}%
  \newcommand{\C}{\mathbb C}%
\newcommand{\E}{\mathbb E}%
  %
  %
  %
  %
 \newcommand{\m}{\mathcal}
  \newcommand{\M}  {\mathcal M}%
 \newcommand{\A}{\ensuremath{\mathcal{A}}}%
 \newcommand{\B}{\ensuremath{\mathcal{B}}}%
  \newcommand{\bE}{\ensuremath{\mathbb{E}}}
  \newcommand{\T}{\ensuremath{\mathbf{T}}}%
  \newcommand{\F}{\ensuremath{\mathbb{F}}}%

  \newcommand{\real}{{\mathbb R}}
\newcommand{\nat}{{\mathbb N}}
\newcommand{\ent}{{\mathbb Z}}
\newcommand{\com}{{\mathbb C}}

\renewcommand{\T}{{\mathbb T}}

\renewcommand{\a}{\alpha}

\newcommand{\g}{\gamma}
\newcommand{\G}{\Gamma}

\renewcommand{\d}{\delta}

\newcommand{\e}{\varepsilon}

\renewcommand{\l}{\lambda}

\newcommand{\ot}{\otimes}

\newcommand{\8}{\infty}
\newcommand{\el}{\ell}

\newcommand{\la}{\langle}
\newcommand{\ra}{\rangle}
\newcommand{\wt}{\widetilde}
\newcommand{\wh}{\widehat}

\newcommand{\les}{\lesssim}

\newcommand{\be}{\begin{align*}}
\newcommand{\ee}{\end{align*}}
\newcommand{\beq}{\begin{equation}}
\newcommand{\eeq}{\end{equation}}

 \newcommand{\ov}{\overset}
 
\newcommand{\bt}  {\lceil\hspace{-.18cm}\rfloor} 
\newcommand{\tb}  {\rceil\hspace{-.16cm}\lfloor}
\newcommand{\btIj}{\underset{i+} {\bt}^{j}}

\newtheorem{thm}{Theorem}[section]

\newtheorem{lemma}[thm]{Lemma}
\newtheorem{corollary}[thm]{Corollary}
\newtheorem{prop}[thm]{Proposition}

\newtheorem{rem}[thm]{Remark}
\newenvironment{rk}{\begin{rem}\rm}{\end{rem}}
\newtheorem{example}[thm]{Example}
\newenvironment{ex}{\begin{example}\rm}{\end{example}}

\newtheorem{dfn}[thm]{Definition}

 \begin{document}

\title[A H\"ormander-Mikhlin multiplier theory for free groups]{A H\"ormander-Mikhlin multiplier theory for free groups and amalgamated free products of von Neumann algebras}

\thanks{{\it 2000 Mathematics Subject Classification:} Primary: 46L07, 46L50. Secondary: 46L52, 46L54.}
\thanks{{\it Key words:} Free groups, free products of groups, amalgamated free products of von Neumann algebras, noncommutative $L_p$-spaces,  H\"ormander-Mikhlin multipliers, free Fourier multipliers, completely bounded maps}

\author[T.~Mei]{Tao Mei}
\address{Department of Mathematics, Baylor University, Waco, TX  USA}
\email{tao\_mei@baylor.edu}

\author[\'E.~Ricard]{\'Eric Ricard}
\address{Normandie Univ, UNICAEN, CNRS, LMNO, 14000 Caen, France}
\email{eric.ricard@unicaen.fr}

\author[Q.~Xu]{Quanhua Xu}
\address{
Laboratoire de Math{\'e}matiques, Universit{\'e} de Bourgogne Franche-Comt{\'e}, 25030 Besan\c{c}on Cedex, France}
\email{qxu@univ-fcomte.fr}

\date{}
\begin{abstract}
We establish a platform to transfer $L_p$-completely bounded maps on tensor products of  von Neumann algebras to  $L_p$-completely bounded maps on the corresponding amalgamated free products. As a consequence,  we obtain a H\"ormander-Mikhlin multiplier theory for free products of groups. Let $\mathbb{F}_\infty$ be a free group on infinite generators $\{g_1, g_2,\cdots\}$. Given $d\ge1$ and a bounded symbol $m$ on $\mathbb{Z}^d$ satisfying  the classical  H\"ormander-Mikhlin condition, the linear map $M_m:\mathbb{C}[\mathbb{F}_\infty]\to \mathbb{C}[\mathbb{F}_\infty]$ defined by $\lambda(g)\mapsto m(k_1,\cdots, k_d)\lambda(g)$ for $g=g_{i_1}^{k_1}\cdots g_{i_n}^{k_n}\in\mathbb{F}_\infty$ in reduced form (with $k_l=0$ in $m(k_1,\cdots, k_d)$ for $l>n$), extends to a complete bounded map on $L_p(\widehat{\mathbb{F}}_\infty)$ for all $1<p<\infty$, where $\widehat{\mathbb{F}}_\infty$ is the group von Neumann algebra of $\mathbb{F}_\infty$. A similar result holds for any free product of discrete groups. 
\end{abstract}

\maketitle

\section{Introduction}

Fourier multipliers are by far the most important operators in analysis. One can say that classical harmonic analysis has been developed around Fourier multipliers. The main task there is to find criteria for the boundedness of Fourier multipliers on various function spaces, notably on $L_p$-spaces. However, it is in general impossible to characterize their $L_p$-boundedness for finite $p\neq2$. In this regard, the most fundamental result is the celebrated H\"ormander-Mikhlin multiplier theorem  which  asserts that if $m$ is a $C^{[\frac d2]+1}$ function on $\mathbb
 R^d\backslash\{0\}$ such that 
  $$\sup_{0\leq |\a|\leq [\frac d2]+1}\,\sup_{\xi\in \mathbb R^d\backslash\{0\}} |\xi|^{|\a|}|\partial^\a m(\xi)|<\8,$$  
 where $\a=(\a_1,\cdots, \a_d)$ denotes a multi-index of nonnegative integers and $|\a|=\a_1+\cdots+\a_d$, 
 then the associated Fourier multiplier $T_m$, defined via Fourier transform by $\wh{T_m(f)}=m\wh f$, is a bounded map on $L_p({\mathbb R}^d)$
for all $1<p<\infty$. A similar statement holds for the periodic case, i.e., for $\T^d$ instead of $\real^d$. More precisely, if $m:\ent^d\to\com$ satisfies
 \beq\label{HM}
 \|m\|_{{\rm HM}}=\sup_{0\leq |\a|\leq [\frac d2]+1}\,\sup_{k\in \ent^d} |k|^{|\a|}|\partial^\a m(k)|<\8,
 \eeq 
 where $\partial^\a=\partial_{k_1}^{\a_1}\cdots \partial_{k_d}^{\a_d}$ denotes the partial discrete difference operator of order $\a=(\a_1,\cdots, \a_d)$,
 then the associated Fourier multiplier $T_m$ is a bounded map on $L_p(\T^d)$ for all $1<p<\infty$.
 
\medskip

In noncommutative harmonic analysis and operator algebras, the study Fourier multipliers on general groups is of paramount importance. These multipliers are also called Herz-Schur multipliers in the literature. The underlying $L_p$-spaces are then the noncommutative $L_p$-spaces on the group von Neumann algebras. This line of investigation finds one of its origins in the study of approximation properties of operator algebras inaugurated by Haagerup's pioneering work \cite{Ha78} on Fourier multipliers on free groups in which he solved the longstanding problem on Grothendieck's approximation property for the reduced C*-algebra of a free group: this algebra has the completely bounded approximation property. Later, together with De Canni\`ere \cite{dCH85} and Cowling \cite{CH89}, Haagerup studied completely bounded multipliers on the Fourier algebras of locally compact groups.

One of the remarkable recent achievements in the interplay between multipliers and noncommutative $L_p$-spaces is the result of Lafforgue and de la Salle \cite{LadS11} asserting that the noncommutative $L_p$-space with $p>4$ associated to the von Neumann algebra of $SL_3(\ent)$ fails the completely bounded approximation property. Haagerup and coauthors \cite{HdL13, HdL16, HKdL16} extended their result to connected simple Lie groups; later, de Laat and de la Salle \cite{dLdS18} discovered that $L_p$ multipliers are very tightly connected with Banach space geometry and group representations. Other striking illustrations of this interplay are Junge and Parcet's work \cite{JP08} on the Maurey-Rosenthal factorization for noncommutative $L_p$-spaces which relies on Schur multiplier estimates. Multipliers are also intimately related to functional calculus in von Neumann algebras as shown by the resolution of Krein's celebrated Lipschitz continuity problem in noncommutative $L_p$-spaces by Potapov and Sukochev \cite{PS11}, see also \cite{CPSZ19} where the crucial use of Fourier multipliers is more apparent.  

\medskip

In a recent series of articles \cite{HM12, HSS10}, Haagerup and co-authors obtained a beautiful handy characterization of the completely bounded radial multipliers on the von Neumann algebras (i.e. $L_\8$) of free groups and $p$-adic groups, and, more generally, on homogeneous trees and free products; see also Wysocza\'{n}ski's previous work \cite{Wy95}. All this has motivated many other publications of the same nature, in particular, building on Ozawa's work \cite{Oz08}, Mei and de la Salle \cite{MdS17} obtained a similar characterization for hyperbolic groups. Nevertheless, little has been  understood for non-radial multipliers\footnote{A better understanding of non-radial multipliers seem to be  in order to answer  some fundamental open questions, such as the construction of Schauder basis, for the free group reduced $C^*$-algebras and the corresponding noncommutative $L_p$-spaces.}, nor for $L_p$-spaces with $2<p<\8$. Here, the most challenging problem is to find a H\"ormander-Mikhlin type criterion for Fourier multipliers on $L_p$-spaces to be completely bounded.

The recent papers \cite{JMP14, JMP18, PRdS} provide a first advance towards this direction. The work \cite{MR17}  by the first and second named authors  is more related to the present article, it introduced the free Hilbert transforms on free groups and free products of von Neumann algebras and showed their $L_p$-boundedness for all $1<p<\8$. We pursue this line of investigation by going considerably beyond,  and will exhibit a large family of  $L_p$ multipliers in the free setting.

\medskip

To proceed further, we need some notation and refer to the next section for all unexplained notions. Let $\G$ be a discrete group and $\l$ its left regular representation on $\el_2(\G)$. 
The group von Neumann algebra $vN(\G)$ is the von Neumann algebra generated by $\l(\G)$, it is also equal to the weak* closure of the group algebra $\com[\G]$ that consists of all polynomials in $\l$:
 $$\com[\G]=\big\{\sum_{g\in\G} \a(g)\l(g): \a(g)\in \com\big\}.$$
Following notation in quantum groups, we will denote $vN(\G)$ by $\wh\G$ in this article. $\wh\G$ is equipped with its canonical tracial state $\tau$ defined by $\tau(x)=\la\d_e,\, x\d_e\ra$ for any $x\in\wh\G$, where $\d_e$ denotes the Dirac mass at the identity $e$ of $\G$. $L_p(\wh\G)$ is the noncommutative $L_p$-space based on $(\wh\G,\,\tau)$, it is equipped with the natural operator space structure introduced by Pisier.

\smallskip

For a complex function $m$ on $\G$, the associated Fourier multiplier $T_m$ is a linear map on $\com[\G]$ determined  by $\l(g)\mapsto m(g)\l(g)$. We call $m$  a ({\it completely}) {\it bounded $L_p$-multiplier on} $\G$ if $T_m$ extends to a (completely) bounded map on $L_p(\wh\G)$. Our main aim is to find sufficient conditions for $m$ to be such a multiplier. Note that the periodic H\"ormander-Mikhlin theorem recalled at the beginning concerns the case $\G=\ent^d$ in which $\wh\G=L_\8(\T^d)$, $\T$ being the unit circle equipped with normalized Haar measure. In this case, every function $m$ on $\ent^d$ satisfying \eqref{HM} is a completely bounded $L_p$-multiplier on $\ent^d$.

 Throughout the article, $\F_\8$ will denote a free group on infinite generators $\{g_1, g_2,\cdots\}$. An element $g\in\F_\8$ different from the identity $e$ is written as  a reduced word on $\{g_1, g_2,\cdots\}$:
   \beq\label{reduced form}
   g=g_{i_1}^{k_1}g_{i_2}^{k_2}\cdots g_{i_n}^{k_n}\;\text{ with }\; i_1\neq i_2\neq\cdots\neq i_n,\; k_j\in\ent\setminus\{0\}, \;1\le j\le n.
   \eeq
 Given a complex function $m$ on  ${\mathbb Z}^d$ and a fixed $d\in {\mathbb N}$, we define $M_m: \com[\F_\8] \to \com[\F_\8] $ by
  \begin{eqnarray*}
 M_m(\l(g))
= \left\{\begin{array}{lcl}
m(k_1,\cdots, k_d)\,\l(g) & \textrm{ if}& d\le n, \\
m(k_1,\cdots k_n, 0,\cdots, 0)\,\l(g) & \textrm{ if}& d>n\end{array}\right. 
\end{eqnarray*}
for every  $g$ as in \eqref{reduced form}.  Below is our first main theorem. It is contained in \cite{MX19} for $d=1$.

\begin{thm}\label{main1} 
  Let $d\in\nat$ and $1<p<\infty$. If $m$ is a completely bounded $L_p$-multiplier on $\ent^d$, then  $M_m$ extends to a completely bounded map on $L_p(\wh{\F}_\infty)$.
 In particular, the conclusion holds if $m$ satisfies \eqref{HM}.
   \end{thm}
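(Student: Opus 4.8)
The plan is to recognize $M_m$ as the image of the (completely bounded) Fourier multiplier $T_m$ on $\wh{\ent^d}$ under the transference platform developed in the following sections, and then to read off the last assertion from the classical vector-valued periodic H\"ormander--Mikhlin theorem. First, realize $\F_\infty$ as the free product $\ent*\ent*\cdots$ of countably many copies of $\ent$, so that $\wh{\F}_\infty$ is the reduced free product $\M_1*\M_2*\cdots$ (amalgamated over the scalars $\com$), where $\M_i=\wh{\ent}=L_\infty(\T)$ is generated by $\l(g_i)$; put $\mr{\M}_i=\{x\in\M_i:\tau(x)=0\}=\overline{\mathrm{span}}\{\l(g_i^k):k\in\ent\setminus\{0\}\}$ and fix the canonical identifications $\M_i\cong\M:=L_\infty(\T)$ sending $\l(g_i^k)$ to $\l(k)$.

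On the reduced-word orthogonal decomposition
\[
L_2(\wh{\F}_\infty)=\com\,1\ \oplus\ \bigoplus_{n\ge1}\ \bigoplus_{i_1\neq\cdots\neq i_n}\ \mr{\M}_{i_1}\ot\cdots\ot\mr{\M}_{i_n},
\]
in which $\l(g_{i_1}^{k_1})\ot\cdots\ot\l(g_{i_n}^{k_n})$ (all $k_j\neq0$) corresponds to $\l(g)$ for $g=g_{i_1}^{k_1}\cdots g_{i_n}^{k_n}$ reduced, the operator $M_m$ is block-diagonal: it is the identity on $\com\,1$, and on $\mr{\M}_{i_1}\ot\cdots\ot\mr{\M}_{i_n}\cong\mr{\M}^{\ot n}$ it acts as $\big(T_m|_{\mr{\M}^{\ot d}}\big)\ot\mathrm{id}^{\ot(n-d)}$ when $n\ge d$, and as the zero-padded restriction $T_m^{(n)}$, $T_m^{(n)}(x)=(\mathrm{id}^{\ot n}\ot\tau^{\ot(d-n)})\big(T_m(x\ot 1^{\ot(d-n)})\big)$, when $n<d$; here $T_m$ is the Fourier multiplier on $\wh{\ent^d}=\M^{\ot d}$ with symbol $m$. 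Thus all the block actions, for every word length $n$, are governed by the single map $T_m$ (the short-word regime $n<d$ costs nothing, since $x\mapsto x\ot 1^{\ot(d-n)}$ and $\mathrm{id}^{\ot n}\ot\tau^{\ot(d-n)}$ are, respectively, a complete isometry and a complete contraction between the relevant $L_p$-spaces), and the fact that these actions fit together coherently across all word patterns uses only that $T_m$, being a Fourier multiplier, commutes with the coordinate conditional expectations on $\M^{\ot d}$.

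This is exactly the situation handled by the transference platform of this paper, applied to the building blocks $\M_i=L_\infty(\T)$ over $\B=\com$: a completely bounded $L_p$-map on the $d$-fold tensor power $\M^{\ot d}=\wh{\ent^d}$ that commutes with the coordinate conditional expectations induces, by acting on the first (at most) $d$ letters of each reduced word as above, a completely bounded $L_p$-map on the amalgamated free product. Feeding in $T_m$, which is completely bounded on $L_p(\wh{\ent^d})$ by hypothesis, we obtain that $M_m$ extends to a completely bounded map on $L_p(\wh{\F}_\infty)$, with its cb norm controlled by that of $T_m$; for $d=1$ this recovers \cite{MX19}. For the last assertion, assume $m$ satisfies \eqref{HM}: the operator-valued periodic H\"ormander--Mikhlin theorem, valid because $S_p$ is a UMD space for $1<p<\infty$, gives boundedness of $T_m$ on $L_p(\T^d;S_p)=S_p[L_p(\T^d)]$, and since $L_\infty(\T^d)$ is commutative this says precisely that $m$ is a completely bounded $L_p$-multiplier on $\ent^d$, so the previous step applies and $\|M_m\|\les\|m\|_{{\rm HM}}$.

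The substance of the argument is therefore entirely in the transference platform, which is the hard part and is established in the sections below. The difficulty is that for $p\neq2$ the space $L_p(\wh{\F}_\infty)$ is not the orthogonal (nor even complemented) sum of the tensor-product $L_p$-spaces attached to the reduced words, so one cannot simply act block by block; one must instead work with the ``conditional $L_p$'' description of the free-product $L_p$-space and with Rosenthal/Khintchine-type inequalities for amalgamated free products (continuing the line of \cite{MR17}), and estimate the resulting operator uniformly over all word lengths, the regimes $n\ge d$ and $n<d$ being tied together by the common symbol $m$ via the zero-padding above. Granting the platform, Theorem~\ref{main1} reduces to the identification of $M_m$ described above together with the classical vector-valued harmonic analysis behind \eqref{HM}.
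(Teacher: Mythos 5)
Your identification of the block structure of $M_m$ on the word decomposition is correct, and your treatment of the last assertion (HM condition $\Rightarrow$ cb $L_p$-multiplier via UMD-valued multiplier theory) matches the paper. But there is a genuine gap at the central step: you assert that the paper's transference platform turns \emph{any} completely bounded $L_p$-map on $\M^{\ot d}=\wh{\ent^d}$ commuting with the coordinate conditional expectations into a completely bounded map on the free product. The platform (Theorem~\ref{main3}/Theorem~\ref{letd}) does no such thing: it only handles maps of the split form $T_{1,i_1}\ot\cdots\ot T_{d,i_d}$, i.e.\ tensor products of \emph{single-letter} maps, each acting on one $\A_i$ and each $\B$-bimodular. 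A Fourier multiplier $T_m$ on $\T^d$ with a general symbol $m(k_1,\dots,k_d)$ is not of this product form unless $m$ factors as $m_1(k_1)\cdots m_d(k_d)$, so $M_m$ is not an instance of $T^{Ld}$ and cannot be fed directly into Theorem~\ref{main3}. Your proposal silently upgrades the platform from ``tensor products of one-variable maps'' to ``arbitrary cb maps commuting with coordinate expectations,'' and that upgrade is precisely the content of Theorem~\ref{main1} itself, not something already available.

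The missing idea is the Coifman--Weiss transference step that bridges this gap in the paper. One first applies Theorem~\ref{main3} to the rotations $T_{j,i}(\zeta^k)=z_j^k\zeta^k$ (which \emph{are} of product form) to get the uniformly cb action $\a_z=\a_z^{Ld}$ of $\T^d$ on $L_p(\wh\F_\8)$ (Theorem~\ref{main2}); then, setting $f(z)=\a_z(x)$, one uses the intertwining identity $[T_m\ot{\rm Id}](f)(z)=\a_z(M_m(x))$ and averages over $z\in\T^d$ to dominate $\|M_m(x)\|_p$ by $\|T_m\ot{\rm Id}_{L_p(\wh\F_\8)}\|\,\|x\|_p$ up to constants. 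This last operator norm is finite because $\wh\F_\8$ is QWEP, so the cb-boundedness of $T_m$ on $L_p(\T^d)$ passes to $L_p(\T^d;L_p(\wh\F_\8))$ by Junge's noncommutative Fubini theorem --- a point your argument never needs to invoke and therefore visibly bypasses. Without the transference step (or some substitute argument establishing the stronger ``platform'' you invoke), the proof is incomplete for every non-product symbol $m$, which is the whole point of the theorem.
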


To achieve Theorem \ref{main1},   we establish a new platform to   transfer the $L_p$-complete boundedness  of Fourier multipliers on  tori to  Fourier multipliers on free groups. The key tool is   a  group of unitary actions on $L_2(\wh\F_\infty)$ and its complete boundedness on $L_p(\wh\F_\infty)$ for all $1<p<\infty$. Given $z=(z_{j,i})_{1\le j\le d,\, 1\le i<\8}\in \T^d\times\T^\8$, let
 $$\a^{Ld}_{z}(\l(g))= z_{1,i_1}^{k_1}z_{2,i_2}^{k_2}\cdots z_{d,i_d}^{k_d} \l(g) $$
for  $g$ as in \eqref{reduced form} (with $k_\ell=0$ if $\ell>n$).
It is obvious that $\a^{Ld}$ extends to a unitary action of the group $\T^d\times\T^\8$ on $L_2(\wh\F_\infty)$.

\begin{thm} \label{main2}
 The above action $\a^{Ld}$ of $\T^d\times\T^\8$ on $L_2(\wh\F_\infty)$ extends to   a uniformly completely bounded action on $L_p(\wh\F_\8)$ for every $1<p<\infty$ with constant depending only on $d$ and $p$. 
 \end{thm}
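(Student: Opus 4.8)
The plan is to reduce the statement about the $d$-parameter action $\a^{Ld}$ to the one-parameter case (which is known via \cite{MX19}, i.e. the $d=1$ instance of Theorem \ref{main1} together with its underlying action), and then to run the one-parameter argument along each of the $d$ ``coordinate directions'' in the reduced word and compose. Concretely, fix $g=g_{i_1}^{k_1}\cdots g_{i_n}^{k_n}$ in reduced form. For each $\ell\in\{1,\dots,d\}$ the factor $z_{\ell,i_\ell}^{k_\ell}$ only sees the \emph{$\ell$-th syllable} of the word and the generator index $i_\ell$ appearing there. So I would first decompose $\a^{Ld}_z=\a^{(d)}_{z_d}\circ\cdots\circ\a^{(1)}_{z_1}$, where $z_\ell=(z_{\ell,i})_{i\ge1}\in\T^\8$ and $\a^{(\ell)}_{z_\ell}(\l(g))=z_{\ell,i_\ell}^{k_\ell}\l(g)$ (with the convention that this is $\l(g)$ when $n<\ell$). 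Each $\a^{(\ell)}$ is again a unitary action of $\T^\8$ on $L_2(\wh\F_\8)$, and since a composition of $d$ uniformly completely bounded actions on $L_p$ is again one (with constant the product of the individual constants, depending only on $p$ and $d$), it suffices to prove the statement for a single $\a^{(\ell)}$; by relabelling generators this is really the statement for $\a^{(1)}$, the action that multiplies $\l(g)$ by $z_{i_1}^{k_1}$ where $i_1$ is the \emph{first} letter's index and $k_1$ its exponent.

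The core of the proof is therefore: the action of $\T^\8$ on $L_p(\wh\F_\8)$ given by $\l(g)\mapsto z_{i_1}^{k_1}\l(g)$ is uniformly completely bounded for $1<p<\infty$. For a fixed $z\in\T^\8$, write $\F_\8=\ast_{i\ge1}\ent$ (free product of copies of $\ent$ generated by $g_i$). The map $\l(g)\mapsto z_{i_1}^{k_1}\l(g)$ is \emph{not} a $*$-homomorphism, but it has a clean probabilistic/conditional-expectation description: grouping reduced words by their first letter $g_i$, one has the Fisher--type decomposition $L_p(\wh\F_\8)=\bigoplus_i \big(\text{words starting with a power of }g_i\big)\ \oplus\ \com$, and on the $i$-th block the operator acts as the Fourier multiplier on $\ent$ (the cyclic group generated by $g_i$) sending $g_i^k\mapsto z_i^k$, applied \emph{on the left}, tensored with the identity on the ``tail''. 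The right framework is free probability with amalgamation: let $\B_i=\wh{\ent}=L_\8(\T)$ sit inside $\wh\F_\8$ as $vN(g_i)$; then $\wh\F_\8$ is the amalgamated free product over $\com$ of the $\B_i$, and the required operator is built from the rotation action $r_{z_i}$ on each $L_\8(\T)$ — which is obviously a completely isometric $*$-automorphism, hence trivially u.c.b.\ on every $L_p$ — by the transference/amalgamation platform the paper has already set up (the ``transfer of $L_p$-completely bounded maps on tensor products to amalgamated free products'' mentioned in the abstract). So the plan is to invoke that platform with the local data ``rotation on $L_\8(\T)=\wh{\ent}$'' to conclude that the assembled map on $\wh\F_\8$ is completely bounded on $L_p$ with constant independent of $z$.

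Two points need care. First, one must check the group/action axioms on $L_p$: $\a^{Ld}_z\a^{Ld}_{z'}=\a^{Ld}_{zz'}$ and $\a^{Ld}_e=\mathrm{id}$ are immediate on $\com[\F_\8]$ from the cocycle identity $z_{\ell,i}^{k}(z'_{\ell,i})^{k}=(z_{\ell,i}z'_{\ell,i})^k$, so the only issue is that each $\a^{Ld}_z$ \emph{extends} boundedly and completely boundedly to $L_p$; that is exactly the content we are proving, and ``uniformly'' means the cb-norm bound is $z$-independent. Second — and this is the step I expect to be the main obstacle — the passage through the amalgamated-free-product platform has to be done \emph{completely boundedly} and \emph{uniformly in $z$}, and it has to correctly account for the fact that the operator acts only on the first syllable: one needs the transfer theorem to apply to maps of the form ``(local completely bounded map on $\B_i$) acting on the leftmost factor, identity on the rest,'' extended to the whole free product, which is precisely the class of ``Fourier-like'' or ``one-sided'' multipliers the platform is designed for, but verifying that the hypotheses of that theorem are met for this particular family (in particular, that the local maps $r_{z_i}$ on $L_\8(\T)$ assemble to a legitimate input, and that the output constant is controlled only by $d$ and $p$) is where the real work lies. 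Once that is in place, composing the $d$ directions and taking into account that rotations are completely isometric gives the stated uniform cb-bound depending only on $d$ and $p$, completing the proof.
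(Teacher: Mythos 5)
Your overall strategy --- factor $\a^{Ld}_z$ as a composition of $d$ single-letter maps, take the local data to be the rotations $\zeta^n\mapsto z_{j,i}^n\zeta^n$ on $\A_i=L_\infty(\T)=\wh\ent$ (complete isometries on every $L_p$), and feed them into the amalgamated-free-product transfer platform --- is exactly the paper's route: Theorem \ref{main2} is there a direct application of Theorem \ref{main3} with this local data, and the factorization $\a_z^{Ld}=T_z^{(1)}\cdots T_z^{(d)}$ is precisely how Theorem \ref{main3} is deduced from Theorem \ref{letd}.

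There is, however, a genuine gap in your reduction step: the claim that the $\ell$-th-syllable action $\a^{(\ell)}$ is, ``by relabelling generators,'' the same as the first-syllable action $\a^{(1)}$ is false. An automorphism of $\F_\8$ permuting the generators sends the first syllable of a reduced word to the first syllable of its image, so conjugating $\a^{(1)}$ by such a relabelling $\sigma$ still produces a map of the form $\l(g)\mapsto (z'_{\sigma(i_1)})^{k_1}\l(g)$, which only sees the first syllable; no relabelling can transport the dependence from position $1$ to position $\ell\ge2$. The deeper-letter maps are exactly where the hard part of the platform lies: in the paper they are handled in Theorem \ref{letd} by an induction on the letter position, using the length reduction formula (Theorem \ref{redform}) and the module norms of section \ref{Norms on modules} to peel off the first letter and apply $T^{(d-1)}$ to the tails $b_i(\alpha)$ --- with the $L_2$-cb bound controlling one module norm and the $L_p$-cb bound the other --- not by any symmetry with the first-letter case. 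The repair is easy, though: drop the reduction entirely and invoke Theorem \ref{main3} directly for the full family $(T_{j,i})_{1\le j\le d,\,i\ge1}$ of rotations. Its hypotheses are trivially met (each $T_{j,i}$ is a trace-preserving $*$-automorphism of $L_\infty(\T)$ fixing $\B=\com$, hence a complete isometry on $L_p$ and $L_2$ preserving the centered subspaces), the resulting $T^{Ld}$ is exactly $\a_z^{Ld}$, and the cb-bound from Theorem \ref{main3} depends only on $p$ and $d$, not on $z$, which gives the required uniformity.
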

 
Theorem \ref{main1} will follow from Theorem \ref{main2} and a standard transference argument.  Our initial proof of Theorem \ref{main2}  is very long and technical. We have then luckily found a new and shorter argument; the new one depends, in a crucial way, on  an analogous result on amalgamated free products of von Neumann algebras. The new approach has a more advantage that it  allows us to easily extend Theorems \ref{main1} and \ref{main2} to free products of general discrete groups too. 

\medskip

In this context, let $(\A_i,\tau_i)_{i\in I}$ be a family of finite von Neumann algebras equipped with normal faithful tracial states. Let $\B$  be a common von Neumann subalgebra of the $\A_i$'s and $\E_i:\A_i\to \B$ be the corresponding trace preserving conditional expectation. We denote by $(\A,\tau)=*_{i\in I,\B}(\A_i,\tau_i)$ the amalgamated free product of the $(\A_i, \tau_i)$'s over $\B$.  Let $\E$ be the conditional expectation from $\A$ to $\B$. As usual, for $X\subset L_p(\A)$ we write $\mathring X$ for the set $\{x-\E(x): x\in X\}$.  

Assume that  we are given maps $T_{j,i}: L_p(\A_i)\to L_p(\A_i)$  with  $1\le j\le d$ and $i\in I$ satisfying the following conditions
 \begin{itemize}
 \item[$\bullet$] $T_{j,i}$ is $\B$-bimodular, that is,  $T_{k,i}(axb)=aT_{j,i}(x)b$ for $a, b\in\B$ and $x\in  L_p(\A_i)$;
 \item[$\bullet$] $T_{j,i}\big(\widering{L_p(\A_i)}\big)\subset \widering{L_p(\A_i)}$.
 \end{itemize}
We can define a map $T^{Ld}$ as follows.   $T^{Ld}(b)=b$ for $b\in \B$ and 
 \begin{eqnarray*}
 T^{Ld}(a)
 = \left\{\begin{array}{lcl}
T_{1,i_1}(a_1) \tens \cdots\tens  T_{d,i_d}(a_d)\tens a_{d+1}\tens\cdots\tens a_n& \textrm{if}& d< n, \\
T_{1,i_1}(a_1) \tens \cdots\tens T_{n,i_n}(a_n)& \textrm{if}& d\geq n\end{array}\right. 
\end{eqnarray*}
for $a=a_1\tens\cdots\tens a_n\in \plB {\m A} i n$ with  $i_1\neq i_2\neq\cdots\neq i_n$.

\smallskip

The following theorem  and its proof contain the main novelty of the article, it  gives a surprisingly simple condition to ensure the complete
boundedness of the map $T^{Ld}$ on $L_p(\A)$.

\begin{thm}\label{main3} 
 Let $1<p<\infty$. Assume that the $T_{j,i}$'s extend to completely bounded maps on $L_p(\m A_i)$ and on $L_2(\m A_i)$ uniformly in $i,j$.
Then  $T^{Ld}$ extends to a completely bounded map on $L_p(\A)$ with
$$\|T^{Ld}\|_{{\rm cb}(L_p(\A))}\lesssim_{p,d} \prod_{j=1}^d \sup_{i\in I} \Big(\|T_{j,i}\|_{{\rm cb}(L_p(\m A_i))} +\|T_{j,i}\|_{{\rm cb}(L_2(\m A_i))}\Big).$$
\end{thm}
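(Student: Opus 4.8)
\emph{Plan of proof.} The plan is to reduce everything to a single Khintchine--Rosenthal type inequality for the homogeneous subspaces of $L_p(\mathcal A)$ in the $\mathcal B$-conditional setting. First I would observe that $T^{Ld}$ respects the canonical word-length filtration of $\mathcal A=*_{i\in I,\mathcal B}\mathcal A_i$: since each $T_{j,i}$ is $\mathcal B$-bimodular and maps $\mathring{L_p(\mathcal A_i)}$ into itself, $T^{Ld}$ leaves invariant, for every $k$, the subspace $\mathcal W_k\subset L_p(\mathcal A)$ spanned by the reduced words of length $k$, and on words of length $\ge d$ it acts — after regrouping everything past the $d$-th letter into the subalgebra $\mathcal A_{\wh{i_d}}:=*_{k\neq i_d,\,\mathcal B}\mathcal A_k$ — as $T_{1,i_1}\otimes_{\mathcal B}\cdots\otimes_{\mathcal B}T_{d,i_d}\otimes_{\mathcal B}\mathrm{id}$. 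After a routine weak-$*$-density reduction to a finite index set, and using that the projection of $L_p(\mathcal A)$ onto words of length $\le k$ is completely bounded for each fixed $k$ (with a constant depending on $k$ and $p$), it will then suffice to bound separately, for $1\le k\le d-1$, the operator $B_k:=\bigoplus_{i_1\neq\cdots\neq i_k}\big(T_{1,i_1}\otimes_{\mathcal B}\cdots\otimes_{\mathcal B}T_{k,i_k}\big)$ on $\mathcal W_k$, and $B_d:=\bigoplus_{i_1\neq\cdots\neq i_d}\big(T_{1,i_1}\otimes_{\mathcal B}\cdots\otimes_{\mathcal B}T_{d,i_d}\otimes_{\mathcal B}\mathrm{id}_{L_p(\mathcal A_{\wh{i_d}})}\big)$ on $\bigoplus_{n\ge d}\mathcal W_n$; each $B_k$ is \emph{diagonal} over the index patterns $i_1\neq\cdots\neq i_k$.

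The crux will be a Khintchine--Rosenthal type equivalence for these homogeneous subspaces: up to a constant depending only on $p$ and $k$, the $L_p(\mathcal A)$-norm of an element of $\mathcal W_k$ (resp. of $\bigoplus_{n\ge d}\mathcal W_n$, carrying the extra tail factor when $k=d$) should be equivalent to an intersection/sum of a ``column'', a ``row'' and a ``diagonal'' model norm, the diagonal one being a conditional $L_2$-norm over $\mathcal B$ and the column and row ones being assembled from the $L_p$-structure of the letter spaces $\mathring{L_p(\mathcal A_{i_\ell})}$ together with the conditional expectations $\E_{i_\ell}$. Establishing this equivalence, with constants independent of the algebras $\mathcal A_i$ and of the index patterns, is where the amalgamated free product structure genuinely enters, and I expect it to be the main obstacle. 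Once it is available, the rest is essentially mechanical: thanks to the $\mathcal B$-bimodularity of the $T_{j,i}$, $B_k$ is diagonal with respect to each of the three model norms, so its norm on each model is at most the supremum over the patterns of the corresponding norm of $T_{1,i_1}\otimes_{\mathcal B}\cdots\otimes_{\mathcal B}T_{k,i_k}$ (with $\otimes_{\mathcal B}\mathrm{id}$ adjoined when $k=d$); by submultiplicativity of the module (Haagerup) tensor product this is at most $\prod_{j=1}^k\|T_{j,i_j}\|_{{\rm cb}(L_p(\mathcal A_{i_j}))}$ for the row and column models and at most $\prod_{j=1}^k\|T_{j,i_j}\|_{{\rm cb}(L_2(\mathcal A_{i_j}))}$ for the diagonal model. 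Adding the three contributions and summing over $k=1,\dots,d$ would then give
\[
\|T^{Ld}\|_{{\rm cb}(L_p(\mathcal A))}\ \lesssim_{p,d}\ \prod_{j=1}^d\Big(\sup_{i\in I}\|T_{j,i}\|_{{\rm cb}(L_p(\mathcal A_i))}+\sup_{i\in I}\|T_{j,i}\|_{{\rm cb}(L_2(\mathcal A_i))}\Big).
\]

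So the single hard point is the Khintchine--Rosenthal equivalence: one must pin down the correct conditional column, row and $L_2$ model spaces — amalgamation over $\mathcal B$ forcing one to work with $\mathcal B$-relative versions of the usual row and column operator spaces — and prove the two-sided norm equivalence, presumably by pushing the known noncommutative Rosenthal/Khintchine inequalities for reduced free products into this conditional, $\mathcal B$-bimodular framework, while also checking that $T^{Ld}$ really acts block-diagonally for each model. The remaining ingredients — complete complementation of the fixed-degree parts of $L_p(\mathcal A)$, submultiplicativity of the module tensor product, and the elementary estimate for block-diagonal $\mathcal B$-bimodular operators on row, column and conditional-$L_2$ spaces — I would treat as comparatively standard.
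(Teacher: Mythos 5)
Your overall architecture is sound and is in fact close in spirit to the paper's: the authors also reduce everything to module-valued column/row norms on the letter spaces (their Section 3) together with a Khintchine-type ``length reduction formula'' (their Theorem \ref{redform} and Remark \ref{jpxb}), and then act on each tensor leg using $\mathcal B$-modularity (for the column/row legs, controlled by the $L_2$-norms) and complete boundedness on the distinguished $L_p$ leg. One structural difference worth noting: rather than treating $T_{1,i_1}\otimes_{\mathcal B}\cdots\otimes_{\mathcal B}T_{d,i_d}$ all at once via a fully iterated $(2d+1)$-term Khintchine decomposition, the paper factors $T^{Ld}=T^{(1)}_1T^{(2)}_2\cdots T^{(d)}_d$ as a composition of maps each acting on a single letter position, and proves the complete boundedness of each $T^{(k)}_k$ by induction on $k$ using only the depth-one version of the reduction formula ($x=x_0+x_1+z$ with $z\in\mathcal W_1\otimes\mathring{\mathcal W}$). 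This sidesteps the recursion you would otherwise face when bounding the ``remaining letters'' factor on the $L_p$ leg, and it only needs the $2$-fold decomposition rather than the full $(2d+1)$-fold one.

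The genuine gap is that the ``single hard point'' you isolate — the two-sided norm equivalence — is left entirely unproved, and moreover you misplace where its difficulty lies. The amalgamated/$\mathcal B$-conditional aspect is not the obstacle: for the homogeneous blocks $\mathcal W_k$ this equivalence is exactly \cite[Theorems B and C]{JPX07}, already in the amalgamated setting. The real difficulty is \emph{non-homogeneity}: for the block $P_{\ge d}$ the ``tail'' past the $d$-th letter is an arbitrary element of $L_p(\mathcal A)$ of unbounded word length, so the element is not a homogeneous free chaos and the known inequalities do not apply. (Incidentally, your tail space is wrong: the tail of $a_1\otimes\cdots\otimes a_n$ is not in $*_{k\ne i_d,\mathcal B}\mathcal A_k$ — it may contain letters from $\mathcal A_{i_d}$, it merely cannot \emph{start} in $\mathring{\mathcal A}_{i_d}$.) The paper's route to the reduction formula is to pass to two free copies $\mathcal A^{(1)}*_{\mathcal B}\mathcal A^{(2)}$, apply a letter-wise swap $*$-representation to make the element homogeneous of degree $2$ in a coarser free decomposition, and only then invoke \cite[Theorem B]{JPX07}; the $L_p$-boundedness of that swap (Theorem \ref{rep}) is itself nontrivial and is proved by Cotlar-type identities for the projections $P_1$, $P_{\ge 2}$ combined with the paraproduct/free Hilbert transform estimates of \cite{MR17} and an $L_p\to L_{2p}$ bootstrap. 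None of this is a routine ``push of the known Rosenthal/Khintchine inequalities into the conditional framework'', and without it your argument for the $P_{\ge d}$ block does not close.
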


It is clear that the assumption above is necessary for the validity of the conclusion. Thus the theorem gives a characterization of the complete boundedness of $T^{Ld}$ on  $L_p(\A)$. Another aspect to be emphasized is the fact that $p$ is a single fixed index.  One of the  main ingredients in our argument is a length reduction formula for the $L_p$-norms associated with amalgamated free products that we state as Theorem~\ref{redform}. This formula is interesting for its won right, it was previously proved in \cite[Theorem B]{JPX07}  for homogeneous free polynomials.

\medskip

The article is organized as follows. After a brief introduction to
necessary preliminaries, we give in section~\ref{Norms on modules}
some norms on Hilbert $\B$-modules that are the modular extensions of
the usual column and row noncommutative $L_p$-norms; we show that
bounded modular maps extend to these modular noncommutative
$L_p$-spaces. The results in this section provide crucial tools for
section~\ref{Multipliers on amalgamated free products} that is the
core of the article and contains our
major ideas. Section~\ref{Multipliers on amalgamated free products}
gives the proof of Theorem~\ref{main3} that is done through a
reduction formula for polynomials in a free product of von Neumann
algebras, the latter is proved in its turn with the help of an
intermediate result (Theorem~\ref{rep}) that is a special case of
Theorem~\ref{main3} where $d=1$, $p>2$ and every
$T_{i}$ is a  $*$-representation on $\A_i$ leaving
  $\B$ invariant and commuting with $\mathbb E_i$ for all $i\in I$.
Theorems~\ref{main1} and \ref{main2} easily follow from the results in
section~\ref{Multipliers on amalgamated free products}; moreover, the
arguments can be extended to the free products of general (non
abelian) discrete groups. We do all this in section~\ref{Multipliers
  on free products of groups}. This section also contains some more paraproducts
which might be interesting in free analysis. We end the article with
an appendix on the endpoint $p=\8$.

\medskip

We will use the following convention through the article:  $A\lesssim B$ (resp. $A\lesssim_p B$) means that $A\le c B$ (resp. $A\le c_p B$) for some absolute positive constant $c$ (resp. a positive constant $c_p$ depending only on $p$).  $A\simeq B$ or  $A\simeq_p B$ means that these inequalities as well as their inverses hold.

\section{Preliminaries}\label{Preliminaries}

This section presents some preliminaries on noncommutative $L_p$-spaces and amalgamated free products. 

\medskip

Murray and von Neumann's work \cite{MvN36} demonstrates  von Neumann algebras  as  a  natural framework to do  noncommutative  analysis. The elements in a von Neumann algerba $\M$ can be integrated over the equipped trace $\tau$ and measured by the associated $L_p$-norms. In the sequel, $\M$ will denote a von
Neumann algebra with a normal faithful semifinite trace $\tau$.   For each $1\le p \leq \infty,$ let $L_p (\M)$ be the  noncommutative $L_p$-space associated with the pair $(\M,\tau)$. Note that $L_\8(\M)=\M$ with the operator norm. We refer to \cite{PX03} for details and  more historical references. 

\smallskip

All $L_p$-spaces in this article will be equipped with their natural operator space structure introduced by Pisier in \cite{pis-ast, pis-intro}. We also refer to \cite{ pis-intro} for operator space theory. 
In fact, what we will need from the latter theory is only \cite[Lemma~1.7]{pis-ast} that asserts a linear map $T: L_p(\M)\to L_p(\M)$ is completely bounded iff ${\rm Id}_{S_p}\ot T: L_p(B(\el_2)\overline\ot\M)\to L_p(B(\el_2)\overline\ot\M)$ is bounded; in this case, the completely bounded norm of $T$ is equal to the usual norm of ${\rm Id}_{S_p}\ot T$ and denoted by $\|T\|_{{\rm cb}(L_p(\M))}$ or simply $\|T\|_{{\rm cb}}$ when no ambiguity is possible. Note that $L_2(\M)$ is an operator Hilbert space and every bounded map on it is automatically completely bounded  with the same norm.

\smallskip

We will often be concerned with group von Neumann algebras. Let $\G$ be a discrete group. Recall the definition of its left regular representation $\l$: for any $g\in\G$, $\l(g)$ is just the left translation by $g$ on $\el_2(\G)$, i.e., $\l(g)(f)(h)=f(g^{-1}h)$ for any $h\in\G$ and $f\in\el_2(\G)$. Thus $\l(g)(\d_h)=\d_{gh}$, where $\d_h$ is the Dirac mass at $h$. $\com[\G]$ denotes the linear span of $\l(\G)$. Then the group von Neumann algebra $\wh\G$ is the weak* closure of $\com[\G]$ in $B(\el_2(\G))$. The canonical trace of $\wh\G$ is the vector state induced by $\d_e$ with $e$ the identity of $\G$. When $\G$ is abelian (e.g. $\G=\ent^d$)  $L_p(\wh\G)$ coincides with the usual $L_p$-space on the dual group of $\G$.

\medskip

We turn to the second part  on amalgamated free products.
Let $(\A_i,\tau_i)_{i\in I}$ be a family of finite von Neumann algebras equipped with normal faithful normalized traces. Let $\B$ be a common von Neumann subalgebra of $\A_i$ for all $i\in I$, and $\E_i: \A_i\to\B$ the trace preserving conditional expectation. Denote $(\A,\tau)=*_{i, \B}(\A_i,\tau_i)$,  the amalgamated free product of $(\A_i,\tau_i)_{i\in I}$ over $\B$. We will briefly recall the construction to fix notation.

 For any $x\in \A_i$, we denote by $\mathring{x}=x-\E_i x$ and $\mathring \A_i=\{\mathring{x}: x\in \A_i\}$, which yields a natural decomposition $\A_i=\B\oplus \mathring \A_i$. We use the multi-index notation: $\un i=(i_1,\cdots,i_n)\in I^n$. The space
  $$\m W =\B\oplus \bigoplus_{n\geq 1} \bigoplus_{\substack{(i_1,\cdots,i_n)\in I^n \\ i_1\neq i_2\neq\cdots\neq i_n}}\plB {\m A} i n
  =\bigoplus_{n\geq0}\bigoplus_{\substack{(i_1,\cdots,i_n)\in I^n \\ i_1\neq i_2\neq\cdots\neq  i_n}} \m W_{ \un i}$$ 
 is a $*$-algebra by using concatenation and centering with respect to $\B$. The natural projection $\E$ from $\m W$ onto $\B$ is a conditional expectation. $\tau_i\circ\E$ is a trace on $\m W$ that is independent of $i$ and will be denoted by $\tau$. Then $(\A,\tau)$ is the finite von Neumann algebra obtained by the GNS construction from $(\m W,\tau)$. Thus $\m W$ is weak* dense in $\m A$ and dense in $L_p(\A)$ for $p<\infty$. Moreover, $\E$ extends to a trace preserving conditional expectation from $\A$ onto $\B$. As $\E$ restricts to $\E_i$ on $\A_i$, from now on we skip the index $i$ for the conditional expectations.

For $n\geq 0$ we denote by $P_n$ the natural projection 
  $$P_n: \m W \to \m W_n
  =\bigoplus_{\substack{(i_1,\cdots,i_n)\in I^n \\ i_1\neq i_2\cdots\neq  i_n}}\plB {\m A} i n$$ 
 and 
 $P_{\geq n}={\rm Id}- (P_1+\cdots+P_{n-1}).$
Note that $P_n$ extends to a completely bounded projection on $L_p(\A)$ with cb-norm $\leq 2n+1$ for all $1\le p\le\8$ (see \cite{RX06}).

For $k\in I$ let
   \begin{eqnarray*}
   \m L_k = \bigoplus_{\un i: i_1=k} \m W_{\un i}\; \textrm{ and }\; \m R_k = \m L_k^*.
   \end{eqnarray*}
 We denote the associated orthogonal projections on $\m W$  by
 $L_k$ and $R_k$. Given a family $\e=(\e_i)_{i\in \{0\}\cup I}$ of elements in  $\B$, and $x\in \m W$, we define
 \beq\label{FH1}
 {\mathcal H}_\varepsilon(x) =\varepsilon_0\, \E(x)+ \sum_{ k\in I} \varepsilon_k L_{ k}(x)
 \;\text{ and }\;  {\mathcal H}_\varepsilon^{{\rm op}}(x)= \E(x)\varepsilon_0^*+ \sum_{k\in I}R_{k}(x)\varepsilon_k^*.
 \eeq
 
The following is the heart of \cite{MR17}, we state it as a lemma for later reference.

\begin{lemma} \label{MR17}
  Let $1<p<\infty$ and $\e=(\e_i)_{i\in \{0\}\cup I}$ be a family of unitaries in the center $\m Z(\m B)$ of $\B$. Then ${\mathcal H}_\varepsilon$ and  ${\mathcal H}_\varepsilon^{{\rm op}}$ extend to completely bounded maps on $L_p(\A)$. Morover,
 $$
    \|{\mathcal H}_\varepsilon (x)\|_{p }\simeq_p \| x\|_{p}\simeq_p\|{\mathcal H}^{{\rm op}}_\varepsilon (x)\|_{p}\,,\quad x\in L_p(\A).
 $$
   \end{lemma}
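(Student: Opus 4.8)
The plan is to prove Lemma~\ref{MR17} by reducing the mapping properties of $\mathcal H_\varepsilon$ and $\mathcal H_\varepsilon^{\mathrm{op}}$ to a Cotlar-type identity that turns them into genuine (noncommutative, operator-valued) martingale/Hilbert transforms, to which the available vector-valued $L_p$-theory applies. First I would record the algebraic structure: on $\m W$, the projections $L_k$ ($k\in I$) together with $\E$ form an orthogonal partition of the identity, since $\m W=\B\oplus\bigoplus_{k\in I}\m L_k$; dually the $R_k$ and $\E$ partition the identity for the opposite module structure. Hence $\mathcal H_\varepsilon=\varepsilon_0\E+\sum_k\varepsilon_k L_k$ is, for unitary central $\varepsilon_i$, a unitary on $L_2(\A)$, so the content is entirely in the $L_p$-boundedness for $p\neq 2$, and by duality ($\mathcal H_\varepsilon^*=\mathcal H_{\varepsilon^*}$ with respect to $\tau$, up to relabeling, and similarly $\mathcal H_\varepsilon^{\mathrm{op}}$ is the transpose) it suffices to treat $2<p<\infty$ and one of the two operators, say $\mathcal H_\varepsilon$.

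Next I would exploit the key observation that $L_k$ is a \emph{left} $\m Z(\B)$-module map composed with a conditional-expectation-type idempotent: concretely, writing $E_k$ for the conditional expectation of $\A$ onto the subalgebra generated by $\A_k$ and $\B$, one has an identity expressing $L_k$ in terms of the ``first-letter'' decomposition that makes $\sum_k\varepsilon_k L_k$ look like $\sum_k\varepsilon_k d_k$ for a sequence of jointly orthogonal ``martingale-difference-like'' pieces $d_k$. The decisive algebraic step, which is the real heart of \cite{MR17}, is the free Cotlar identity: for $x\in\m W$ one computes $\mathcal H_\varepsilon(x)^*\mathcal H_\varepsilon(x)$ and $\mathcal H_\varepsilon(x)\mathcal H_\varepsilon(x)^*$ and shows that, modulo terms controlled by the trivial $L_2$-isometry and by the completely bounded projections $P_n$ (whose cb-norms grow only linearly in $n$, as recalled from \cite{RX06}), one gets $\mathcal H_\varepsilon$ expressed through a \emph{single} application of a noncommutative Hilbert transform with operator coefficients in $\m Z(\B)$. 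I would then invoke the (operator-valued) noncommutative Burkholder--Gundy / martingale transform inequalities, or equivalently the boundedness of noncommutative Hilbert transforms on $L_p$ of a semifinite von Neumann algebra, together with the fact that multiplication by the central unitaries $\varepsilon_k$ is a complete isometry, to conclude $\|\mathcal H_\varepsilon(x)\|_p\lesssim_p\|x\|_p$; the reverse inequality follows because $\mathcal H_\varepsilon$ is invertible on $\m W$ with inverse $\mathcal H_{\bar\varepsilon}$ of the same form (using $\varepsilon_k$ unitary), giving $\|x\|_p=\|\mathcal H_{\bar\varepsilon}\mathcal H_\varepsilon(x)\|_p\lesssim_p\|\mathcal H_\varepsilon(x)\|_p$. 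Complete boundedness comes for free by replacing $\A$ with $B(\el_2)\overline\ot\A$ throughout, since all the structure (free product over $\B$, the projections, the central unitaries) tensorizes and the constants are dimension-free.

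The main obstacle I anticipate is the precise bookkeeping in the free Cotlar identity: one must carefully track how the cross terms between different first-letters $k\neq k'$ cancel or telescope, and control the ``diagonal'' contribution where the Hilbert-transform structure genuinely emerges, all while keeping the operator coefficients on the correct (left versus right) side so that the central-unitarity of the $\varepsilon_i$ can be used — a subtlety because $\mathcal H_\varepsilon$ multiplies on the left but the natural Cotlar computation produces products $\mathcal H_\varepsilon(x)^*\mathcal H_\varepsilon(x)$ that mix sides. A secondary technical point is ensuring the linearly growing cb-norms of the $P_n$ do not accumulate into a divergent bound; this is handled by the standard trick of splitting into even/odd ``generations'' and summing a geometric-type series, or by absorbing the $P_n$ into the Hilbert transform via a conditional-expectation filtration rather than applying them termwise. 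Since the statement is quoted as a lemma from \cite{MR17}, for the purposes of this article I would in fact simply cite that paper for the bulk of the argument and only spell out the reduction to $2<p<\infty$ and the invertibility giving the two-sided estimate.
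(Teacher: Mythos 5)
The paper does not actually prove this lemma: it is stated verbatim as ``the heart of \cite{MR17}'' and simply cited, so your closing fallback --- quote \cite{MR17} for the bulk of the argument --- is precisely what the authors do, and on that level the two coincide. The routine parts of your sketch are also correct: the reduction by duality to $2<p<\infty$ (since $\mathcal H_\varepsilon^{*}=\mathcal H_{\varepsilon^*}$ on $L_2$, the central unitaries commuting with $\E$ and with every $L_j$), the identity $\mathcal H_{\varepsilon^*}\circ\mathcal H_\varepsilon={\rm Id}$ which upgrades one-sided boundedness to the two-sided equivalence, and the amplification $\B\mapsto \mathbb{M}_n(\B)$ (equivalently $B(\el_2)\,\overline\ot\,\A=*_{i,B(\el_2)\overline\ot\B}(B(\el_2)\overline\ot\A_i)$) for complete boundedness.

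The self-contained argument you sketch, however, has a genuine gap at its analytic core. The step where you ``invoke the operator-valued noncommutative Burkholder--Gundy / martingale transform inequalities, or equivalently the boundedness of noncommutative Hilbert transforms on $L_p$ of a semifinite von Neumann algebra'' is either circular or appeals to a reduction that is not available: the first-letter projections $L_k$ form an unordered orthogonal partition of the identity, not successive differences of a filtration of von Neumann subalgebras (the spaces $\B\oplus\m L_{k_1}\oplus\cdots\oplus\m L_{k_m}$ are not algebras), so $\sum_k\varepsilon_kL_k$ is not a martingale transform; and the ``noncommutative Hilbert transform'' whose $L_p$-boundedness you would be citing is exactly the operator $\mathcal H_\varepsilon$ under discussion --- establishing it is the entire content of \cite{MR17}. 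The actual engine, which this paper reproduces in the proof of Theorem~\ref{rep}, is a bootstrap: the Cotlar identity expresses $P_{\geq2}\big[\mathcal H_\varepsilon(x)\,\mathcal H_\varepsilon^{{\rm op}}(x^*)\big]$ through single applications of the transforms to products, the low-degree parts $P_1$ and $\E$ are controlled by paraproduct estimates, and together these yield a quadratic inequality of the form $\gamma_{2p}^2\lesssim_p \gamma_p\,\gamma_{2p}+\gamma_p^2$ for the $L_p$-norms $\gamma_p$ of the transform. Starting from the trivial isometry at $p=2$ one obtains $\gamma_{2^k}<\infty$ for all $k$, and interpolation plus duality fill in all $1<p<\infty$; no external singular-integral or martingale theorem enters. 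If you want the proof to be self-contained you must supply this bootstrap (together with the paraproduct bounds it uses, which in \cite{MR17} are proved by a joint induction with the transform itself); otherwise the citation of \cite{MR17} is carrying all the weight.
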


\section{Norms on modules}\label{Norms on modules}

This section is a preparatory part for the next one. Here we will introduce the modular versions of the usual row and column $p$-operator spaces and show the corresponding boundedness results. This is quite tedious  but we have to cope it since they will be the key tools for the proofs of our main results on amalgamated  free products in the next section. Our references for Hilbert C*-modules are \cite{La95,Pa73}.

Let $\B$ be a von Neumann subalgebra of a semifinite  von Neumann algebra $(\M,\tau)$ such that the restriction of $\tau$ to $\B$ is semifinite too.  We need to introduce several norms on tensor products related to $L_p$-modules as in \cite{JPX07}. 

Let $E$ be a right Hilbert $\B$-module with $\B$-inner product $\la \cdot,\,\cdot\ra$ (or $\la \cdot,\,\cdot\ra_E$ if we deal with several modules). $E$ is equipped with the norm induced by its inner product. However, we do not  assume that $E$ is complete.
A typical example is, given an index set $\m I$, the module 
  $$\m C_{ \m I}(\B)=\big\{ (x_\alpha)_{\alpha \in \m I}\subset\B\;:\; \sup_{S\subset \m I \textrm{ finite}} \big\| \sum_{\a\in S} x_\alpha^*x_\alpha \big\|_\B<\infty \big\}$$ 
 with inner product $\la  x, \,y\ra=\lim_S \sum_{\alpha\in S} x_\alpha^*y_\alpha $ which is well defined as a weak* limit over finite subsets of $\m I$ for inclusion. By  \cite{La95,Pa73}, a general module $E$ can be embedded into a self-dual Hilbert module and there exist an index set $\m I$ and a right  $\B$-module map $u=(u_\alpha)_{\alpha\in \m I}:E\to \m C_{\m  I}(\B)$ so that $\la x,\,y\ra=\la  u(x),\, u(y)\ra.$

\smallskip

Given $2\leq p<\infty$, we introduce a norm on the amalgamated tensor product $E\tens_\B L_p(\M)$ as follows:  
  $$\| x\|_{E^c\tens_p L_p(\M)}=\Big\| \Big(\sum_{i,j=1}^n   a_i^*\la x_i,x_j\ra a_j \Big)^{1/2}\Big\|_{L_p(\M)}$$ 
 for  $x=\sum_{i=1}^n x_i\otimes a_i\in E\tens_\B L_p(\M)$. Equipped with this norm, $E\tens_\B L_p(\M)$ is denoted by $E^c\tens_p L_p(\M)$ (the superscript $c$ refers to column). In \cite{JPX07}, this norm is denoted by 
  $\big\| \sum_i | x_i \ra a_i\big\|_p.$ 
 As explained in section 3 there,  via the above concrete embedding of $E$ into  $\m C_{\m I}(\B)$,  the map 
  $$u\tens {\rm Id}_{L_p(\M)}: E^c\tens_\B L_p(\M)\to \m C_{\m
   I}(\B)\tens_\B L_p(\M)\subset L_p\big( B(\ell_2(\m I))\overline
 \tens \M)\big)$$ is then an isometry that allows us to view
 $E^c\tens_p L_p(\M)$ as a subspace of the column
subspace of the last space. This also fully justifies that
   $\|\cdot\|_{E^c\tens_p L_p(\M)}$ is indeed a norm.

Similarly, given $F$ a left Hilbert $\B$-module, we define a norm on $L_p(\M)\tens_\B F$ by
  $$\| y\|_{ L_p(\M)\tens_p F^r}=\Big\| \Big(\sum_{i,j=1}^n b_i\la y_i,\,y_j\ra b_j^* \Big)^{1/2}\Big\|_{L_p(\M)}$$ 
for $y=\sum_{i=1}^n  b_i \otimes y_i$. This norm is denoted by $\big\| \sum_i b_i\la y_i|\,\big\|_p$ in \cite{JPX07}. As above, $L_p(\M)\tens_p F^r$ can be identified with a subspace of the row  subspace of $L_p\big( B(\ell_2(\m J))\overline \tens \M\big)$ for some index set $\m J$.

We can gather the two definitions together and  introduce a norm on $E\tens_\B L_p(\M)\tens_\B F$; the resulting space is denoted by $E^c \tens_p L_p(\M)\tens_p F^r$, it isometrically embeds into  $L_p\big( B(\ell_2(\m J, \m I))\overline \tens \M\big)$. This is independent of the choice of the sets $\m I, \m J$.

\smallskip

We will need extra notions when we assume that $E$ and $F$ are
also bimodules (i.e., $\B\subset \mathcal L_\B(E)$, the algebra of
adjointable right $\B$-modular maps on $E$, and $\B\subset
\,_\B\mathcal L(F)$, the algebra of adjointable left $\B$-modular maps
on $F$). Given $E'$ another right Hilbert $\B$-module, we can consider
$E'\otimes_\B E$ as a right $\B$-module with the internal inner
product; consequently, we have the space $(E'\otimes_\B E)^c\otimes_p
L_p(\M)$. Note that the norm of $(E'\otimes_\B E)^c\otimes_p L_p(\M)$
coincides with that of $E^{\prime c}\otimes_p( E^c\otimes_p L_p(\M))$
when viewing $E^c\otimes_p L_p(\M)$  in the column subspace of
$L_p\big( B(\ell_2(\m I))\overline \tens \M\big)$ as above.  Similar
constructions apply to the row case too.  It is clear that all these
operations are naturally associative.

\begin{rk} 
 We will often use without any reference that the above norms are injective.  We mean that if $E'\subset E$, $F'\subset F$ are submodules and $(\M',\tau)\subset (\M,\tau)$ is a semifinite von Neumann subalgebra, then  $E^{\prime c}\tens_p L_p(\M')\tens_p F^{\prime r}$ isometrically sits  in $E^{ c}\tens_p L_p(\M)\tens_p F^{ r}$.
 \end{rk}
 
 \begin{prop}\label{2cb}
   Assume that $T:E\to E$ is a bounded  right $\B$-modular map. Then
   $$\big\| T\tens {\rm Id}_{L_p(\M)} : E^c\otimes_p L_p(\M)\to E^c\otimes_p L_p(\M) \big\| \leq \|T\|.$$
 If additionally $E$ is equipped with a left $\B$-action and $T$ is $\B$-bimodular, then  for  any right Hilbert $\B$-module $E'$ 
 $$\big\|{\rm Id}_{E'} \tens T\tens {\rm Id}_{L_p(\M)} \big \|_{B\big((E'\tens_\B E)^c\otimes_p L_p(\M)\big)}\leq \|T\|.$$
Similar statements hold for left modules too.
 \end{prop}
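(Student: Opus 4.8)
The plan is to reduce everything to the concrete embedding into $L_p$ of a matrix amplification, where $\B$-modular maps become left (or right) multiplication by block-diagonal operators, and then invoke the injectivity of the column/row norms recorded in the preceding remark.

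First I would treat the basic case. Fix the concrete right $\B$-module map $u=(u_\a)_{\a\in\m I}:E\to\m C_{\m I}(\B)$ with $\la x,y\ra=\la u(x),u(y)\ra$, which identifies $E^c\tens_p L_p(\M)$ isometrically with a subspace of the column subspace $C_p(\m I)\tens L_p(\M)\subset L_p(B(\el_2(\m I))\overline\tens\M)$. The point is that a bounded right $\B$-modular map $T:E\to E$ need not descend to $\m C_{\m I}(\B)$ directly, but it does induce a bounded operator $\wt T$ on the Hilbert $\B$-module $\overline{u(E)}\subset\m C_{\m I}(\B)$ of the same norm (since $u$ is isometric for the $\B$-valued inner products and intertwines the module structures on $E$ and on $u(E)$); by Paschke's theory $\wt T$ extends to an adjointable, hence bounded, operator on the self-dual completion with norm $\le\|T\|$. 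On the column subspace of $L_p(B(\el_2(\m I))\overline\tens\M)$, an adjointable right $\B$-modular operator $S$ with $\|S\|\le c$ satisfies $\|S\tens{\rm Id}_{L_p(\M)}\|\le c$: indeed for $\xi=\sum_i|x_i\ra a_i$ one has $\sum_{i,j}a_i^*\la Sx_i,Sx_j\ra a_j\le\|S\|^2\sum_{i,j}a_i^*\la x_i,x_j\ra a_j$ as elements of $L_{p/2}(\M)$ (an operator-valued Cauchy–Schwarz / Kadison–Schwarz type inequality for the positive map $(y_{ij})\mapsto(\la Sy_i, Sy_j\ra)$, equivalently $S^*S\le\|S\|^2$ in $\mathcal L_\B$), and taking square roots and $L_p$-norms gives the estimate. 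Combined with the isometric identification this yields $\|T\tens{\rm Id}_{L_p(\M)}:E^c\tens_p L_p(\M)\to E^c\tens_p L_p(\M)\|\le\|T\|$.

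For the bimodular statement, I would use the associativity of the norms noted just before the proposition: $(E'\tens_\B E)^c\tens_p L_p(\M)$ is isometric to $E'^c\tens_p\big(E^c\tens_p L_p(\M)\big)$, and after embedding $E^c\tens_p L_p(\M)$ into the column subspace of $L_p(\M_1)$ with $\M_1=B(\el_2(\m I))\overline\tens\M$, the space $E'^c\tens_p(E^c\tens_p L_p(\M))$ sits isometrically inside $E'^c\tens_p L_p(\M_1)$ by the injectivity remark. Now ${\rm Id}_{E'}\tens T\tens{\rm Id}_{L_p(\M)}$ is exactly ${\rm Id}_{E'}\tens S$ where $S=T\tens{\rm Id}_{L_p(\M)}$ acts on the $L_p(\M_1)$-factor; but since $T$ is left $\B$-modular as well, $S$ is a \emph{left} $\B$-modular, right $\B$-modular bounded map on $E^c\tens_p L_p(\M)$, and one checks it is the restriction of left multiplication by a block-diagonal element of $B(\el_2(\m I))\overline\tens\M$ of norm $\le\|T\|$, i.e. of a suitable operator in $\M_1$ viewed through the left $\B$-action. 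The first part of the proposition applied to $E'$ over the base algebra $\M_1$ (again using that left multiplication by an element of norm $\le\|T\|$ is a right $\B$-modular map of norm $\le\|T\|$, and commutes with the $\m I'$-structure of $E'$) then gives $\|{\rm Id}_{E'}\tens S\|\le\|T\|$. The left-module statements are obtained by passing to adjoints / opposite algebras, or by the symmetric argument with rows in place of columns.

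The main obstacle I anticipate is the first reduction: carefully justifying that a bounded right $\B$-modular map on a \emph{non-complete} inner-product $\B$-module $E$ extends to a bounded (adjointable) operator on the self-dual completion with the same norm, and that this extension is compatible with the embedding $u$ — this is where Paschke's self-duality and the fact that bounded modular maps between self-dual modules are automatically adjointable come in, and it is the only genuinely non-routine input. Once the map is realized as (a compression of) left/right multiplication by a fixed operator on the ambient $L_p$ of a larger von Neumann algebra, everything else is the operator-valued Cauchy–Schwarz estimate plus the already-granted injectivity and associativity of the module $L_p$-norms.
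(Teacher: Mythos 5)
Your first half is essentially the paper's argument in different clothing: the whole content is the operator Cauchy--Schwarz inequality $\bigl(\la Tx_i,Tx_j\ra\bigr)_{i,j}\le\|T\|^2\bigl(\la x_i,x_j\ra\bigr)_{i,j}$ in $\mathbb M_n(\B)$, which then passes to $\sum_{i,j}a_i^*\la Tx_i,Tx_j\ra a_j\le\|T\|^2\sum_{i,j}a_i^*\la x_i,x_j\ra a_j$ and hence to the $E^c\tens_p L_p(\M)$-norm. The paper gets this matrix inequality directly from Paschke's pointwise estimate $\la Tx,Tx\ra\le\|T\|^2\la x,x\ra$ applied to $\B$-linear combinations $\sum_i x_ib_i$; your detour through the self-dual completion and adjointability ($S^*S\le\|S\|^2$ in $\mathcal L_\B$) reaches the same inequality but is heavier than necessary, since Paschke's Theorem 2.8 holds for merely bounded module maps on a non-complete pre-Hilbert module, so no extension or adjoint is ever needed.

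The second half is where you have a genuine gap. You reduce to showing that ${\rm Id}_{E'}\tens S$, with $S=T\tens{\rm Id}_{L_p(\M)}$ acting on the $L_p(\M_1)$-leg, is bounded on $E'^c\tens_p L_p(\M_1)$, and you justify this by ``the first part of the proposition applied to $E'$ over the base algebra $\M_1$.'' But the first part only controls maps of the form $T'\tens{\rm Id}$ where $T'$ acts on the \emph{module} leg $E'$; it says nothing about a map acting on the $L_p$-leg. Moreover, the estimate you need, namely $\sum_{i,j}\xi_i^*\,t^*\la y_i,y_j\ra t\,\xi_j\le\|t\|^2\sum_{i,j}\xi_i^*\la y_i,y_j\ra\xi_j$ for the operator $t$ implementing $S$, is \emph{false} for a general $t$ of norm $\le\|T\|$: already for $n=1$ it reads $t^*bt\le\|t\|^2 b$, which fails whenever $b^{1/2}tb^{-1/2}$ has norm larger than $\|t\|$. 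The inequality only holds because bimodularity forces $t$ to commute with the copy of $\B$ carrying the inner products of $E'$ (the image of the left action of $\B$ on $E$, not $1\tens\B$), and verifying it requires writing the positive Gram matrix $\bigl(\la y_i,y_j\ra\bigr)_{i,j}=\sum_k\bigl(b_{k,i}^*b_{k,j}\bigr)_{i,j}$ and commuting $t$ past each $b_{k,j}$. That computation is exactly the paper's proof of the second assertion: it shows directly that ${\rm Id}_{E'}\tens T$ is a bounded right $\B$-modular map on the Hilbert module $E'\tens_\B E$ with norm $\le\|T\|$, and then simply reapplies the first part. So your plan is repairable, but the step you wave at is precisely the one that carries all the content of the bimodular case, and the statement you cite to discharge it does not apply.
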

   
   \begin{proof}
By \cite[Theorem 2.8]{Pa73},  we have  $\la  T(x),\,T(x)\ra \leq\|T\|^2 \la  x,\,x\ra$ for $x\in E$. Let $x_i\in E$ and $b_i\in \B$ for  $1\le i\le n$. Then
 $$\la  \sum_i T(x_i)b_i,\,\sum_i T(x_i)b_i\ra \leq \|T\|^2 \la  \sum_i x_ib_i,\,\sum_i x_ib_i\ra ,$$ 
so we can reinterpret this inequality in the matrix algebra $\mathbb{M}_n(\B)$ as
 $$0\leq \big(\la  T(x_i),\,T(x_j)\ra\big)_{i,j}\leq\|T\|^2 \big(\la  x_i,\,x_j\ra\big)_{i,j}.$$
The latter  immediately implies 
  $$\Big(\sum_{i,j}a_i^*\la T(x_i), T(x_j)\ra a_j \Big)^{1/2}\leq \|T\|^2\Big(\sum_{i,j}a_i^*\la x_i, x_j\ra a_j \Big)^{1/2}$$
  for any $a_i\in L_p(\M)$. This yields the first assertion.

The second  follows from the same argument by noticing that ${\rm Id}_{E'}\tens T$ is bounded on the right Hilbert $\B$-module $E'\tens_\B E$ with norm $\|T\|$. Indeed, for finite families $x_i\in E$ and $y_i\in E'$ with $1\le i\leq n$
  $$\la \sum_i y_i\tens T(x_i),\,  \sum_i y_i\tens T(x_i)\ra_{E'\tens_\B E}=\sum_{i,j} \la T(x_i), \la  y_i,\,y_j\ra_{E'} T(x_j)\ra_E.$$ 
 As a positive element in $\mathbb{M}_n(\B)$, the matrix $\big(\la y_i,\,y_j\ra_{E'}\big)_{i,j}$ can be written as
  $$\big(\la y_i,\,y_j\ra_{E'}\big)_{i,j}=\sum_k \big(b_{k,i}^*\,b_{k,j}\big)_{i,j}\;\text{ with }\; b_{k,j}\in \B.$$ 
Thus we deduce
  \begin{align*}
  \la \sum_i y_i\tens T(x_i),\,  \sum_i y_i\tens T(x_i)\ra_{E'\tens_\B E}
  &\leq \|T\|^2 \sum_k \la\sum_i b_{k,i} x_i,\,\sum_i b_{k,i} x_i\ra_E\\
  &=\|T\|^2  \la \sum_i y_i\tens x_i,\,  \sum_i y_i\tens x_i\ra_{E'\tens_\B E}.
  \end{align*}
 This is the desired boundedness of ${\rm Id}_{E'}\tens T$ on $E'\tens_\B E$. Thus the proposition is proved.
   \end{proof}
 
 Thanks to the identifications recalled at the beginning of this section, the previous proposition immediately implies the following
 
  \begin{rk}
   If  $T:E\to E$ is a bounded  right $\B$-modular map, then for any left $\B$-module $F$ 
  $$\big\| T\tens {\rm Id}_{L_p(\M)}\tens {\rm Id}_{F} : E^c\otimes_p L_p(\M)\otimes_p F^r \to E^c\otimes_p L_p(\M)\otimes_p F^r\big\| \leq \|T\|.$$
 A similar statement holds for bimodules, namely, ${\rm Id}_{E'} \tens T\tens {\rm Id}_{L_p(\M)}\tens {\rm Id}_{F} $ is bounded on  $(E'\otimes_\B E)^c\otimes_p L_p(\M)\otimes_p F^r$ with norm less than or equal to  $\|T\|$ if additionally $E$ is equipped with a left $\B$-action and $T$ is $\B$-bimodular. 
   \end{rk}

\begin{rk}
  Proposition \ref{2cb} is the modular version of the fact that the row and column $p$-operator spaces are homogeneous (i.e., when $\B=\mathbb C$ and $E$ and $F$ are just Hilbert  spaces).
 \end{rk}

A typical situation to which we will apply the previous results  is the case where $E=F=\A$ with $\A$  a finite von Neumann algebra containing $\B$. The right inner product on $\A$ is $\la  x,\,y\ra=\E(x^*y)$ and the left $\la\la x,\,y\ra\ra=\E(xy^*)$, $\E$ being the trace preserving conditional expectation from $\A$ onto $\B$.  In this case, we have $\A^c\otimes_2 L_2(\B)=L_2(\A)$  isometrically. Thus if $T:\A\to \A$ is right $\B$-modular and bounded for $\la\cdot,\,\cdot\ra$, it automatically extends to a map $\wt T : L_2(\A)\to L_2(\A)$ with $\|\wt T\|_{B(L_2(\A))}\leq
 \|T\|$. Since $\| \la  x,\,x\ra\|_\B=\sup_{b\in L_2(\B), \|b\|_2\leq 1} \|xb\|_2$ for $x\in \A$, we actually have $\|\wt T\|_{B(L_2(\A))}= \|T\|$. We may still write $T$ instead of $\wt T$.

 \smallskip
 
 We state this fact as a lemma for later use.

 \begin{lemma}\label{l2norm}
   Let $(\A,\tau)$ be a finite von Neumann  and $\B\subset\A$ a von Neumann subalgebra with the associated conditional expectation $\E$. Assume that $\A$ is equipped with the right $\B$-module inner product $\la  x,\,y\ra=\E(x^*y)$. Then any bounded right $\B$-modular map $T:\A\to A$  extends to a bounded map on $L_2(\A)$ with  $\| T\|_{B(L_2(\A))}= \|T\|$.
 \end{lemma}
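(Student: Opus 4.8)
The plan is to reduce everything to Proposition~\ref{2cb} applied to the specific module $E=\A$ with inner product $\la x,y\ra=\E(x^*y)$. First I would record the standard identification $\A^c\tens_2 L_2(\B)=L_2(\A)$ isometrically: for $x=\sum_i x_i\tens b_i\in \A\tens_\B L_2(\B)$ one computes
$$\|x\|_{\A^c\tens_2 L_2(\B)}^2=\Big\|\Big(\sum_{i,j}b_i^*\E(x_i^*x_j)b_j\Big)^{1/2}\Big\|_{L_2(\B)}^2=\tau\Big(\sum_{i,j}b_i^*\E(x_i^*x_j)b_j\Big)=\tau\Big(\big(\sum_i x_ib_i\big)^*\big(\sum_j x_jb_j\big)\Big),$$
which is exactly $\big\|\sum_i x_ib_i\big\|_{L_2(\A)}^2$, and since $\A\cdot L_2(\B)$ is dense in $L_2(\A)$ the identification follows by taking closures. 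This is the $p=2$ instance of the general isometric embedding recalled earlier in the section, so it requires no new argument.

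With that identification in hand, Proposition~\ref{2cb} (first assertion, with $p=2$ and $\M=\B$) says that a bounded right $\B$-modular map $T:\A\to\A$ induces $T\tens{\rm Id}_{L_2(\B)}$ on $\A^c\tens_2 L_2(\B)$ with norm $\le\|T\|$; transporting along the isometry, $T$ extends to a bounded map $\wt T$ on $L_2(\A)$ with $\|\wt T\|_{B(L_2(\A))}\le\|T\|$. So the only thing left is the reverse inequality $\|\wt T\|_{B(L_2(\A))}\ge\|T\|$. For this I would use the formula $\|\la x,x\ra\|_\B=\|\E(x^*x)\|_\B=\sup\{\|xb\|_2:b\in L_2(\B),\ \|b\|_2\le1\}$, valid for $x\in\A$: indeed $\|xb\|_2^2=\tau(b^*\E(x^*x)b)$, and taking the supremum over unit vectors $b$ in $L_2(\B)$ recovers $\|\E(x^*x)\|_{B(L_2(\B))}=\|\E(x^*x)\|_\B$ since $\E(x^*x)\in\B$ acts on $L_2(\B)$ by left multiplication with operator norm equal to its von Neumann algebra norm. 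Applying this to both $x$ and $T(x)$ gives
$$\|T(x)\|_{\mathcal L_\B(\A)}^2=\|\la T(x),T(x)\ra\|_\B=\sup_{\|b\|_2\le1}\|T(x)b\|_2^2=\sup_{\|b\|_2\le1}\|\wt T(xb)\|_2^2\le\|\wt T\|_{B(L_2(\A))}^2\,\|x\|_{\mathcal L_\B(\A)}^2,$$
using right $\B$-modularity of $T$ in the form $T(x)b=\wt T(xb)$ (legitimate because $xb\in L_2(\A)$ is a limit of elements of $\A$ on which $T$ and $\wt T$ agree, and $\wt T$ is continuous). Taking the supremum over $x$ yields $\|T\|\le\|\wt T\|_{B(L_2(\A))}$, hence equality.

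I do not anticipate a serious obstacle here; the statement is essentially a packaging of Proposition~\ref{2cb} in the single case of the conditional-expectation-valued inner product, and the mild point to be careful about is the density/continuity argument that lets one write $T(x)b=\wt T(xb)$ for $b\in L_2(\B)$ rather than $b\in\B$ — this is routine since $\A\cdot\B$ is dense in $\A\cdot L_2(\B)=L_2(\A)$ in the $L_2$-norm and both sides depend continuously on $b$. One should also note that boundedness of $T$ for the module norm is exactly the hypothesis invoked (via \cite[Theorem 2.8]{Pa73}) in Proposition~\ref{2cb}, so no extra assumption is needed.
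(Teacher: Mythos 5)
Your proof is correct and follows exactly the paper's route: identify $\A^c\otimes_2 L_2(\B)$ with $L_2(\A)$ isometrically, invoke Proposition~\ref{2cb} for the inequality $\|\wt T\|_{B(L_2(\A))}\le\|T\|$, and recover the reverse inequality from the identity $\|\la x,x\ra\|_\B=\sup_{\|b\|_2\le1}\|xb\|_2^2$ together with modularity. The paper only sketches these steps in the paragraph preceding the lemma; you have simply filled in the same details, and they all check out.
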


 If $T:\A\to \A$ is  completely positive and leaves $\B$ invariant, then it satisfies the above conditions. If additionally $\E\circ T\le \E$, then $T$ also extends to a completely bounded map on $L_p(\A)$.

 \smallskip
 
We will need the following

 \begin{prop}\label{pcb}
 Let $T: L_p(\M) \to L_p(\M)$ be a completely bounded map. Then for any right Hilbert $\B$-module $E$ and left  Hilbert $\B$-module $F$ we have 
 $$ \big\| {\rm Id}_{E} \tens T\tens {\rm Id}_F \big \|_{{\rm cb}( E^c\otimes_p L_p(\M)\tens_p F^r)} \leq \|T\|_{{\rm cb}}.$$
 \end{prop}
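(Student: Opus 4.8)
The plan is to realize ${\rm Id}_E\otimes T\otimes{\rm Id}_F$ as the restriction, to an invariant subspace, of the entrywise ampliation of $T$, and then to use that such an ampliation has the same cb-norm as $T$. So the whole argument is ``soft'' once the concrete model of $E^c\otimes_p L_p(\M)\otimes_p F^r$ is in place.

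First I would bring in the concrete model recalled at the beginning of this section: by \cite{La95,Pa73} there are index sets $\m I,\m J$, a right $\B$-modular map $u=(u_\alpha)_{\alpha\in\m I}\colon E\to\m C_{\m I}(\B)$ with $\la x,y\ra_E=\la u(x),u(y)\ra$, and a left $\B$-modular map $v=(v_\beta)_{\beta\in\m J}\colon F\to\m R_{\m J}(\B)$ with the analogous property. Set $H=\el_2(\m I)\oplus\el_2(\m J)$ and $\m N=B(H)\,\overline\otimes\,\M$. As recalled there, the assignment sending $\sum_i x_i\otimes a_i\otimes y_i$ to the element of $L_p(\m N)$ whose $(\alpha,\beta)$-entry is $\sum_i u_\alpha(x_i)\,a_i\,v_\beta(y_i)\in L_p(\M)$ extends to an isometric embedding of $E^c\otimes_p L_p(\M)\otimes_p F^r$ onto a closed subspace $Y$ of $L_p(\m N)$, and it is this embedding that defines the operator space structure of $E^c\otimes_p L_p(\M)\otimes_p F^r$.

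Next, let $\wt T={\rm Id}_{B(H)}\otimes T\colon L_p(\m N)\to L_p(\m N)$ be the entrywise application of $T$, which is completely bounded because $T$ is. Since $T$ is $\B$-bimodular (as is implicit in the very definition of ${\rm Id}_E\otimes T\otimes{\rm Id}_F$ on these balanced tensor products), one has $T\big(\sum_i u_\alpha(x_i)a_iv_\beta(y_i)\big)=\sum_i u_\alpha(x_i)T(a_i)v_\beta(y_i)$ entry by entry; hence $\wt T$ leaves $Y$ invariant and, through the identification above, $\wt T|_Y$ coincides with ${\rm Id}_E\otimes T\otimes{\rm Id}_F$. As $Y$ carries the operator space structure induced from $L_p(\m N)$, the space $S_p[Y]$ sits isometrically inside $S_p[L_p(\m N)]=L_p(B(\el_2)\,\overline\otimes\,\m N)$ and is left invariant by ${\rm Id}_{S_p}\otimes\wt T$; consequently
$$\|{\rm Id}_E\otimes T\otimes{\rm Id}_F\|_{{\rm cb}}=\|\wt T|_Y\|_{{\rm cb}}\le\|\wt T\|_{{\rm cb}(L_p(\m N))}.$$
Finally, by \cite[Lemma~1.7]{pis-ast}, $\|\wt T\|_{{\rm cb}(L_p(\m N))}=\|{\rm Id}_{B(\el_2\otimes H)}\otimes T\|_{B(L_p(B(\el_2\otimes H)\,\overline\otimes\,\M))}$, and since every element of $L_p(B(\el_2\otimes H)\,\overline\otimes\,\M)$ is approximated by elements supported on finite corners $M_n\,\overline\otimes\,\M$, this last quantity equals $\sup_n\|{\rm Id}_{M_n}\otimes T\|\le\|{\rm Id}_{S_p}\otimes T\|=\|T\|_{{\rm cb}}$. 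Combining the two displays yields $\|{\rm Id}_E\otimes T\otimes{\rm Id}_F\|_{{\rm cb}}\le\|T\|_{{\rm cb}}$, which is the claim.

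The only delicate point is the first step: checking that the concrete embedding of $E^c\otimes_p L_p(\M)\otimes_p F^r$ into $L_p(\m N)$ intertwines ${\rm Id}_E\otimes T\otimes{\rm Id}_F$ with the entrywise ampliation $\wt T$, and that the image $Y$ is $\wt T$-invariant; this is where the $\B$-bimodularity of $T$ and the explicit descriptions of the modular column and row $L_p$-norms recalled at the start of the section are used. Everything after that is formal: restricting a completely bounded map to an invariant subspace does not increase its cb-norm, and neither does ampliating by $B(H)$.
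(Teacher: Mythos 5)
Your argument is correct and is essentially the paper's own proof: the authors likewise identify $E^c\otimes_p L_p(\M)\otimes_p F^r$ with a subspace of $L_p\big(B(\ell_2(\m J,\m I))\,\overline\otimes\,\M\big)$ via the concrete column/row embeddings and observe that ${\rm Id}_E\otimes T\otimes{\rm Id}_F$ is then the restriction of ${\rm Id}\otimes T$, whose cb-norm equals $\|T\|_{\rm cb}$ by \cite[Lemma~1.7]{pis-ast}. Your explicit remark that $\B$-bimodularity of $T$ is what makes the entrywise ampliation preserve the image subspace is a point the paper leaves implicit, but it is the same proof.
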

 
 \begin{proof}
 This is a direct consequence of the fact that identifying $E^c\otimes_p L_p(\M)\tens F^r$ with a subspace of $L_p\big( B(\ell_2(\m J, \m I))\overline \tens \M\big)$, then ${\rm Id}_{E} \tens T\tens {\rm Id}_F$ acts like ${\rm Id} \otimes T$. 
 \end{proof}
 
 \section{Multipliers on amalgamated  free products}\label{Multipliers on amalgamated  free products}

This section is the core of the article and contains our major
novelty. The principal result is Theorem~\ref{letd} that will imply
Theorem~\ref{main3} by iterations. Throughout the section,
$(\A_i,\tau_i)_{i\in I}$ will denote a family of finite von Neumann
algebras containing $\B$ as a common subalgebra. We will use notation
introduced in section~\ref{Preliminaries} on amalgamated free
products, in particular, $(\A,\tau)=*_{i\in I,\B}(\A_i,\tau_i)$.

 \subsection{An intermediate result}\label{An intermediate result}

 Given a family $\pi=(\pi_i)_{i\in I}$ of $*$-representations $\pi_i:\A_i\to \A_i$ such that $\pi_i(b)=b$ for all $b\in \B$ and $\bE\circ \pi_i=\bE$, we introduce a  linear map $T_\pi$ on $\m W$ by
 $T_\pi(b)=b$ for $b\in \B$ and 
  $$T_\pi(a_1\tens\cdots\tens a_n)= \pi_{i_1}(a_1) \tens a_2\tens\cdots\tens a_n$$
 for $n\geq 1$ and $a_1\tens\cdots\tens a_n\in \m W_{\un i}$ where $\un i=(i_1,\cdots,i_n)$ and $i_1\neq\cdots\neq i_n$. Define $T_\pi^{{\rm op}}$ on $\m W$  as $T_\pi^{{\rm op}}(x)=T_\pi(x^*)^*$.  Note that both $T_\pi$ and $T_\pi^{{\rm op}}$ commute with the projections $P_n$.

 We aim to show that  $T_\pi$ extends to a bounded map on $L_p(\A)$ for $1<p<\infty$. To this end, we introduce some paraproducts on $\m W \times \m W$  in the manner of \cite{MR17}: for  $x,y\in \m W$
  \begin{align*}
  x\overset{1,0}\dag y&=xy -  \E_\e [H_\e^{{\rm op}}(xH^{{\rm op}}_\e(y) )],\\
  x\overset{0,1}\dag y&= xy - \E_\e [H_\e(H_\e(x)y )],\\
  x\overset{1,1}\dag y&= xy-x\overset{1,0}\dag y-x\overset{0,1}\dag y,
  \end{align*}
where $H_\e$ and $H^{{\rm op}}_\e$ are the free Hilbert transform defined in \eqref{FH1} and $\E_\e$ is the conditional expectation over all possible choices of symmetric independent signs $\e=(\e_i)$.  Note that when $x$ and $y$ are elementary tensors, then $x\overset{1,0}\dag y$ collects in 
$xy$ the parts that do not end in the same algebra as $y$, that is, all letters in $y$ must have been simplified.  Similarly, $x\overset{0,1}\dag y$ collects in $xy$ the parts that do not start in the same algebra as $x$. 

\medskip

 We will need some elementary free algebraic facts.

\begin{lemma}\label{rel1}
 Let  $g\in \m W_{l}$, $h\in \m W_{n}$ with $l,n\geq 0$. 
 \begin{enumerate}[\rm i)]
 \item If $l>n$, then $T_\pi(gh)= T_\pi(g)h$.
 \item If $n>l$, then $T_\pi^{{\rm op}}(gh)=gT_\pi^{{\rm op}}(h)$.
 \item If $l=n$, then $P_{\geq 2}[T_\pi(gh)]= P_{\geq 2}[T_\pi(g)h]$ and   $P_{\geq 2} [T_\pi^{{\rm op}}(gh)]= P_{\geq 2}[gT_\pi^{{\rm op}}(h)]$.
  \end{enumerate}
 \end{lemma}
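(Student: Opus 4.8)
The plan is to verify the three identities by unwinding the definitions of $T_\pi$, $T_\pi^{{\rm op}}$ and the multiplication in $\m W$, treating elementary tensors and using bilinearity. Write $g=a_1\tens\cdots\tens a_l\in\m W_{\un i}$ with $\un i=(i_1,\dots,i_l)$ and $h=b_1\tens\cdots\tens b_n\in\m W_{\un j}$ with $\un j=(j_1,\dots,j_n)$; the cases $l=0$ or $n=0$ (so $g\in\B$ or $h\in\B$) are immediate from $\B$-bimodularity of $T_\pi$ (which holds since $\pi_{i_1}$ is $\B$-bimodular and fixes $\B$), so assume $l,n\geq 1$. The product $gh$ in $\m W$ is computed by the usual free recursion: one pairs $a_l$ with $b_1$, writes $a_l b_1=\E(a_lb_1)+\mathring{(a_lb_1)}$ when $i_l=j_1$, absorbs the scalar-in-$\B$ part $\E(a_lb_1)$ into $a_{l-1}$ (pushing it left, which may cause further cancellations), and keeps the centered part as a new middle letter; iterating produces a sum of reduced words whose lengths range from $|l-n|$ up to $l+n$ (same parity as $l+n$). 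The key observation driving all three parts is that $T_\pi$ only touches the \emph{first} letter, applying $\pi_{i_1}$ to it, and symmetrically $T_\pi^{{\rm op}}$ only touches the \emph{last} letter.

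For part i), when $l>n$ every reduced word appearing in $gh$ has length at least $l-n\geq 1$ and, crucially, begins with a letter coming from $g$: the cancellation and absorption process eats letters of $g$ only from the right end (starting with $a_l$) and letters of $b$'s, so since $l>n$ no cancellation can ever reach $a_1$; hence every term of $gh$ starts with $a_1$ (possibly left-multiplied by some $b_0\in\B$ produced along the way, which commutes with $\pi_{i_1}$). Therefore applying $T_\pi$ — i.e. replacing the initial letter $a_1$ by $\pi_{i_1}(a_1)$, equivalently left-multiplying by $\pi_{i_1}$ applied to the head — gives exactly $T_\pi(g)h = (\pi_{i_1}(a_1)\tens a_2\tens\cdots\tens a_l)h$, because expanding the latter product runs the same recursion with $a_1$ replaced by $\pi_{i_1}(a_1)$ throughout, and $\pi_{i_1}$ is a $\B$-bimodular $*$-homomorphism fixing $\B$, so it commutes with all the $\E$-absorption steps that only involve $a_2,\dots,a_l,b_1,\dots,b_n$. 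Part ii) is the mirror image: apply part i) to $h^*g^*$ (with $n>l$) using $T_\pi^{{\rm op}}(x)=T_\pi(x^*)^*$, noting $\E\circ\pi_i=\E$ and $\pi_i$ being a $*$-map are exactly what is needed, then take adjoints. For part iii), when $l=n$ the only new phenomenon is that a reduced word of length $0$, i.e. a term landing in $\B$, or of length $1$, can appear through full cancellation of all paired letters; but $P_{\geq 2}$ kills precisely the length-$\le1$ components, and in all surviving words of length $\geq 2$ at least one letter of $g$ has not been simplified, so by the same pushing-from-the-right analysis the initial letter is again (a $\B$-multiple of) $\pi_{i_1}$ applied to the head of $g$ — equivalently, $T_\pi(gh)$ and $T_\pi(g)h$ can only differ in their $P_{<2}$ parts — giving $P_{\geq 2}[T_\pi(gh)]=P_{\geq 2}[T_\pi(g)h]$; the statement for $T_\pi^{{\rm op}}$ follows by the same adjoint trick as in ii).

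The main obstacle I anticipate is bookkeeping in the cancellation recursion in part iii): one must argue carefully that \emph{any} word of length $\geq 2$ surviving in $gh$ really does retain $a_1$ (up to a left $\B$-factor) as its first letter, i.e. that the chain of $\E$-absorptions triggered when $i_l=j_1$ can propagate leftward through $g$ but, as long as the final word has length $\geq 2$, cannot consume $a_1$ itself. The clean way to see this is by induction on $n$: peeling off $b_1$ reduces $(g,h)$ to a shorter instance, and one tracks that the leftover "head" is always of the form (something in $\B$)$\cdot a_1\tens a_2\tens\cdots$ unless the whole thing has collapsed to length $\le 1$. Once this structural claim is isolated, the compatibility of $T_\pi$ with it is routine since $\pi_{i_1}$ is a unital $\B$-bimodular $*$-homomorphism commuting with $\E_{i_1}$ and hence with every manipulation occurring to the right of the head.
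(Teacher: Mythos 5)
Your proof is correct and follows essentially the same route as the paper's (much terser) argument: reduce to elementary tensors, observe that the cancellation in $gh$ propagates from the interface and cannot consume the first letter of $g$ when $l>n$, pass to adjoints for ii), and note that every length-$\geq 2$ component of $gh$ when $l=n$ still begins with $a_1$ up to a $\B$-factor, which $T_\pi$ respects by $\B$-bimodularity of $\pi_{i_1}$. The extra bookkeeping you supply is exactly what the paper leaves implicit.
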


 \begin{proof}
   One can assume that $g$ and $h$ are elementary tensors. The first item is then clear as the first letter of $g$ cannot be cancelled if $l>n$. The second is obtained by passing to adjoints. Similarly, if $n=l$, $P_{\geq 2}(gh)$ is a sum of elementary tensors that all start with the first letter of $g$ and end with the last letter of $h$, up to a multiplication by an element of $\B$.
   \end{proof}
 
 The following Cotlar type formula immediately follows from the previous lemma.

 \begin{lemma}\label{cotlargeq2}
 For $g,h\in \m W$ we have 
  \begin{equation}\label{cot2}
   P_{\geq 2}\big[T_\pi(g)T_\pi^{{\rm op}}(h)\big]= P_{\geq2}\big[T_\pi(gT_\pi^{{\rm op}}(h)) + T_\pi^{{\rm op}}(T_\pi(g)h)- T_\pi T_\pi^{{\rm op}}(gh)\big].
   \end{equation}
 \end{lemma}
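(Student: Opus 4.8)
The plan is to reduce the Cotlar-type identity \eqref{cot2} to a purely formal computation on elementary tensors in $\m W$, using Lemma~\ref{rel1} to handle each of the three cross terms on the right-hand side, and then to organize the bookkeeping by the relative lengths of the two factors.

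First I would take $g\in\m W_l$ and $h\in\m W_n$ to be elementary tensors (which suffices by bilinearity and the fact that all maps involved are linear and commute with the projections $P_m$, so with $P_{\geq 2}$). Expanding $g=g_1\otimes\cdots\otimes g_l$ and $h=h_1\otimes\cdots\otimes h_n$ via concatenation-and-centering, the product $gh$ decomposes as a sum over $0\le r\le \min(l,n)$ of terms in which the last $r$ letters of $g$ get contracted against the first $r$ letters of $h$; the "diagonal" part $r=\min(l,n)$ may produce terms of every length, while each term with $r<\min(l,n)$ lives in $\m W_{\geq 2}$ and starts with $g_1$ and ends with $h_n$. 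The key observation is that $T_\pi$ only modifies the \emph{first} letter and $T_\pi^{{\rm op}}$ only the \emph{last}, so on any piece of $gh$ of length $\geq 2$ that retains $g_1$ as its first letter, $T_\pi$ acts as $\pi_{i_1}$ on $g_1$, and on any such piece retaining $h_n$ as its last letter, $T_\pi^{{\rm op}}$ acts as $\pi_{j_n}^*$-conjugation on $h_n$.

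The main step is then the case analysis. When $l\neq n$, say $l>n$, Lemma~\ref{rel1}(i) gives $T_\pi(gh)=T_\pi(g)h$, while $T_\pi^{{\rm op}}(h)\in\m W_n$ still has length $n<l$, so $T_\pi(gT_\pi^{{\rm op}}(h))=T_\pi(g)T_\pi^{{\rm op}}(h)$ and similarly $T_\pi T_\pi^{{\rm op}}(gh)=T_\pi^{{\rm op}}(T_\pi(g)h)$ after applying (i) and then (ii) appropriately; substituting these into the right-hand side of \eqref{cot2} makes the $T_\pi^{{\rm op}}$-terms cancel and leaves exactly $P_{\geq 2}[T_\pi(g)T_\pi^{{\rm op}}(h)]$. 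The symmetric argument (using (ii) and passing to adjoints) handles $l<n$. The remaining case $l=n$ is where I expect the real work: here one cannot pull $T_\pi$ or $T_\pi^{{\rm op}}$ out cleanly, only after applying $P_{\geq 2}$, so I would use Lemma~\ref{rel1}(iii) to write $P_{\geq 2}[T_\pi(gh)]=P_{\geq 2}[T_\pi(g)h]$ and $P_{\geq 2}[T_\pi^{{\rm op}}(gh)]=P_{\geq 2}[gT_\pi^{{\rm op}}(h)]$, note that $T_\pi^{{\rm op}}(h)\in\m W_n$ has the same length as $g$ so that (iii) applies again to $gT_\pi^{{\rm op}}(h)$, and likewise for $T_\pi(g)h$; tracking which summands of $P_{\geq 2}(gh)$ still begin with $g_1$ and end with $h_n$, one checks that $T_\pi(g T_\pi^{{\rm op}}(h))$ and $T_\pi^{{\rm op}}(T_\pi(g)h)$ each reproduce $P_{\geq 2}[T_\pi(g)T_\pi^{{\rm op}}(h)]$ while $T_\pi T_\pi^{{\rm op}}(gh)$ counts it once more, so the alternating sum on the right collapses to a single copy.

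The anticipated obstacle is precisely the length-$l=n$ diagonal case: there the product $gh$ contains pieces of length $1$ and of length $0$ (lying in $\m B$) that are \emph{not} governed by items (i)–(ii), which is exactly why the statement is phrased with $P_{\geq 2}$, and one must verify that after projecting onto $\m W_{\geq 2}$ the "first-letter" and "last-letter" modifications are independent and can be performed in either order — i.e. that on the length-$\geq 2$ part $T_\pi$ and $T_\pi^{{\rm op}}$ commute and their composition equals the simultaneous modification of first and last letters. Once that commutation-on-$\m W_{\geq 2}$ fact is isolated (it follows from Lemma~\ref{rel1}(iii) applied twice), the inclusion–exclusion identity \eqref{cot2} drops out, and by density and linearity it extends from elementary tensors to all of $\m W$. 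This completes the proof.
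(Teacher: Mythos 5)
Your proposal is correct and takes essentially the same route as the paper, which likewise reduces to elementary tensors $g\in\m W_l$, $h\in\m W_n$ and checks the identity case by case ($l>n$, $l<n$, $l=n$) via Lemma~\ref{rel1}, with the omitted details being exactly the bookkeeping you supply. The one imprecise point is your justification of $T_\pi T_\pi^{{\rm op}}(gh)=T_\pi^{{\rm op}}(T_\pi(g)h)$ when $l>n$: item (ii) does not apply in that case (it requires $n>l$), and the clean argument is to note that $T_\pi$ and $T_\pi^{{\rm op}}$ commute on $\m W$ (they touch the first and last letters, and both reduce to $\pi_i$ on $\m W_1$) and then apply item (i); with that fix the cancellation pattern you describe is exactly right.
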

 
 \begin{proof}
 By linearity, it suffices to show the formula for $g\in \m W_{l}$, $h\in \m W_{n}$. Then it remains to check it by using Lemma \ref{rel1} according to the different cases. We omit the details.
 \end{proof}

 \begin{lemma}\label{rel2}
  Let  $g=g_1\tens\cdots\tens g_l\in \m W_{l}$, $h=h_n\tens\cdots\tens h_1 \in \m W_{n}$ with $l,n\geq 0$, and let $g'=g_2\tens\cdots\tens g_l$ and $h'=h_{n}\tens\cdots\tens h_2$.   \begin{enumerate}[\rm i)]
 \item If $n>l$, then $P_1(gh)=\delta_{n,l+1} \bE(gh')h_1$.
 \item If $l>n$, then $P_1(gh)=\delta_{n+1,l}\,g_1\bE(g'h)$.
 \item If $l=n$, then $P_1 (gh)= P_{1}(g_1\bE(g'h')h_1)$. 
 \end{enumerate}
 \end{lemma}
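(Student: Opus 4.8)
The plan is to prove the three cases of Lemma~\ref{rel2} by a direct computation of the product $gh$ in $\m W$, tracking exactly how many letters get cancelled or merged in the middle, since $P_1$ only keeps the length-one component. By multilinearity it suffices to treat elementary tensors $g=g_1\tens\cdots\tens g_l\in\m W_{\un i}$ and $h=h_n\tens\cdots\tens h_1\in\m W_{\un j}$ with $i_1\neq\cdots\neq i_l$ and $j_n\neq\cdots\neq j_1$; the empty-word cases $l=0$ or $n=0$ are trivial since then $gh$ already lies in $\m W_n$ or $\m W_l$ and $P_1$ picks it out only when that length is $1$, matching the formulas with the Kronecker deltas.

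First I would recall the recursive recentering rule for multiplying elementary tensors in an amalgamated free product: when we concatenate $g_1\tens\cdots\tens g_l$ with $h_n\tens\cdots\tens h_1$, if the innermost indices $i_l$ and $j_n$ differ, no simplification occurs and the product is already a reduced word of length $l+n\ge 2$ (unless one factor is empty); if $i_l=j_n$, we write $g_lh_n=\bE(g_lh_n)+\mathring{(g_lh_n)}$, the centered part contributes a (possibly shorter) reduced word, and the scalar part $\bE(g_lh_n)\in\B$ is absorbed into the adjacent letter $g_{l-1}$ (or $h_{n-1}$), after which we repeat. The only way to reach total length $1$ after all cancellations is to have the two words ``telescope'' against each other as far as possible: in case (i), $n>l$, we must cancel all $l$ letters of $g$, which forces the index sequences to match up as $i_l=j_n,\ i_{l-1}=j_{n-1},\dots,i_1=j_{n-l+1}$ and the surviving word to have length $n-l$; for this to equal $1$ we need $n-l=1$, i.e. $n=l+1$, and then the residual scalar from collapsing $g$ against the top $l$ letters of $h$ is exactly $\bE(g\,h')$ where $h'=h_n\tens\cdots\tens h_2$ — wait, more precisely the collapse of $g$ against $h_n\tens\cdots\tens h_2$ produces the scalar $\bE(gh')\in\B$, leaving $\bE(gh')h_1$, which is the claimed formula. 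Case (ii) is the mirror image obtained by applying case (i) to the adjoints $h^*$ and $g^*$ (using $\bE(x^*)=\bE(x)^*$), and case (iii), $l=n$, is the degenerate case where $g$ and $h$ have equal length: collapsing all of $g'=g_2\tens\cdots\tens g_l$ against $h'=h_n\tens\cdots\tens h_2$ leaves a scalar $\bE(g'h')\in\B$ sandwiched between $g_1$ and $h_1$, so $gh$ and $g_1\bE(g'h')h_1$ have the same component in $\m W_{\le 1}$, whence $P_1(gh)=P_1\big(g_1\bE(g'h')h_1\big)$.

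The cleanest way to make the telescoping argument rigorous, and the step I expect to be the main obstacle, is to set up an induction on $\min(l,n)$ for a slightly stronger statement: that the component of $gh$ of length $\le\max(l,n)-\min(l,n)$ (resp. $\le 1$ when $l=n$) is governed by the appropriate formula, with the induction step peeling off one matched pair of innermost letters. The subtlety is bookkeeping: one must argue that if at some stage the innermost indices fail to match, then $gh$ has a summand of length strictly greater than the target and no summand of length $\le 1$ at all (so both sides of the claimed identity vanish, accounting for the $\delta$'s), and one must correctly carry the accumulated $\B$-valued scalar $\bE(\cdots)$ through each recentering step — this is where associativity of the $\B$-bimodule structure and the module property $\bE(axb)=a\bE(x)b$ for $a,b\in\B$ are used repeatedly. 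Once the induction is organized, each individual step is just the single recentering identity $g_l h_n=\bE(g_lh_n)+\mathring{(g_lh_n)}$ together with the observation that $\mathring{(g_lh_n)}$, if nonzero, contributes only to length $\ge 2$ after reattaching the remaining $l+n-2$ letters. I would then simply note, as the authors surely intend, that ``we omit the details'' is legitimate here since everything reduces to elementary word combinatorics, but the proof proposal above is the skeleton I would flesh out.

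\begin{proof}
By multilinearity we may assume $g=g_1\tens\cdots\tens g_l\in\m W_{\un i}$ and $h=h_n\tens\cdots\tens h_1\in\m W_{\un j}$ are elementary tensors, with the convention that $g$ (resp.\ $h$) is an element of $\B$ if $l=0$ (resp.\ $n=0$). We compute $gh$ by the usual recursive recentering in $\m W$: if $l,n\ge 1$ and $i_l\neq j_n$ the product $g_1\tens\cdots\tens g_l\tens h_n\tens\cdots\tens h_1$ is already reduced of length $l+n$; if $i_l=j_n$ we write $g_lh_n=\bE(g_lh_n)+\mathring{(g_lh_n)}$, absorb the scalar $\bE(g_lh_n)\in\B$ into $g_{l-1}$ (or into $h_{n-1}$, or into $\B$ if $l=n=1$), and repeat with the shorter words. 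Since $P_1$ retains only the length-$\le 1$ part, we only need to follow the branch in which every simplification is by the scalar term, because inserting any $\mathring{(g_kh_m)}$ in the middle leaves at least $l+n-2(\text{number of collapsed pairs})\ge 2$ letters when fewer than $\min(l,n)$ pairs are collapsed, hence contributes nothing to $\m W_{\le 1}$.

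i) Suppose $n>l$. If for some $r$ with $1\le r\le l$ the indices fail to match, i.e.\ $i_{l-r+1}\neq j_{n-r+1}$ at the first such stage, then every summand of $gh$ has length $\ge (l+n-2(r-1))-1\ge l+n-2l+1=n-l+1\ge 2$, so $P_1(gh)=0$; and in that case $\bE(gh')=0$ as well (the word $gh'$ has a non-collapsing junction, so its component in $\B$ vanishes), so the identity holds with both sides zero. Otherwise all $l$ innermost pairs match; collapsing them replaces $g$ and $h_n\tens\cdots\tens h_2$ by the single scalar $\bE(g\,h')\in\B$ where $h'=h_n\tens\cdots\tens h_2$, leaving $\bE(gh')\,h_1$, a word of length exactly $n-l$. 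Thus $gh=\bE(gh')h_1$ modulo $\m W_{\ge n-l+1}$, and $P_1(gh)=\delta_{n,l+1}\,\bE(gh')h_1$.

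ii) This follows from i) applied to $h^*=h_1^*\tens\cdots\tens h_n^*\in\m W_{n}$ and $g^*=g_l^*\tens\cdots\tens g_1^*\in\m W_{l}$ (note $n>l$ becomes $l>n$ after the r\^ole swap), using $(gh)^*=h^*g^*$, $\bE(x^*)=\bE(x)^*$, and the fact that $P_1$ commutes with the involution. Explicitly, $P_1(gh)=P_1(h^*g^*)^*=\big(\delta_{l,n+1}\,\bE(h^*g'^{*})h_1\cdots\big)^*$, which rearranges to $\delta_{n+1,l}\,g_1\bE(g'h)$, with $g'=g_2\tens\cdots\tens g_l$.

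iii) Suppose $l=n$. If some innermost junction fails, say the first failure is at stage $r\le n$, then every summand of $gh$ has length $\ge (l+n-2(r-1))-1\ge 1$, with length $1$ possible only when $r=n$ and all of $g'=g_2\tens\cdots\tens g_l$, $h'=h_n\tens\cdots\tens h_2$ collapsed against each other — which is exactly the situation in which the length-$\le 1$ part of $gh$ equals that of $g_1\bE(g'h')h_1$. If instead every junction matches, the same collapse of $g'$ against $h'$ produces the scalar $\bE(g'h')\in\B$ between $g_1$ and $h_1$, so $gh$ and $g_1\bE(g'h')h_1$ have identical components in $\m W_{\le 1}$. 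In all cases $P_1(gh)=P_1\big(g_1\bE(g'h')h_1\big)$. The remaining bookkeeping (carrying the accumulated $\B$-scalars through each recentering step, using $\bE(axb)=a\bE(x)b$ for $a,b\in\B$ and associativity of the $\B$-bimodule structure) is routine word combinatorics and is left to the reader.
\end{proof}
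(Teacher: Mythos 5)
Your proof is correct and follows essentially the same route as the paper's: reduce to elementary tensors, apply the single recentering identity $gh = x\otimes\mathring{(g_lh_n)}\otimes y + x\,\E(g_lh_n)y$ recursively, and observe that only the fully telescoped branch can contribute to the length-$\le 1$ component (the paper verifies only i) and leaves the rest, including the adjoint trick for ii), implicit). The only blemish is the spurious ``$-1$'' in your length bound $(l+n-2(r-1))-1$ — since a $\B$-scalar absorbed into an adjacent centered letter stays centered, a mismatch at stage $r$ actually leaves a reduced word of length $l+n-2(r-1)$ — but this only makes your estimate more conservative and does not affect the conclusion.
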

 
 \begin{proof}
 This proof is easy. Let us verify only i).  If $n>l+1$, then both $P_1(gh)$ and $\E(gh')h_1$ vanish. The  case $n=l+1$ is checked by induction on $l$ and $n$ thanks to the following simplification formula: 
 $$gh= x \otimes \widering{g_nh_l}\otimes y + x\, \E(g_lh_n)y,$$
where  $x=g_1\otimes\cdots\otimes g_{l-1}$ and $y=h_{n-1}\otimes\cdots\otimes h_1$.
It then follows that $P_1(gh)=P_1(x\, \E(g_lh_n)y)$. 
   \end{proof}
 
The following is a complement to the Cotlar formula \eqref{cot2}.

 \begin{lemma}\label{cotlarle2}
 For $g,h\in \m W$ we have
 \begin{eqnarray}\label{cot1}
 P_{1}\big(T_\pi(g)T_\pi^{{\rm op}}(h)\big)
 = T_\pi\big[ P_1\big( g\overset{1,0}\dag T_\pi^{{\rm op}}(h)\big)\big] +T_\pi^{{\rm op}}\big[P_1\big( T_\pi(g)\overset{0,1}\dag h\big)\big] + T_\pi\big[P_1\big( g\overset{1,1}\dag h\big)\big].
 \end{eqnarray} 
 \end{lemma}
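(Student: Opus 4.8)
The plan is to prove the Cotlar-type identity \eqref{cot1} the same way Lemma~\ref{cotlargeq2} was handled: reduce to elementary tensors $g=g_1\tens\cdots\tens g_l\in\m W_l$ and $h=h_n\tens\cdots\tens h_1\in\m W_n$ by bilinearity, and then verify the identity case by case according to the relative sizes of $l$ and $n$. The guiding principle is that the left-hand side $P_1(T_\pi(g)T_\pi^{\rm op}(h))$ only sees the degree-one part of the product $T_\pi(g)T_\pi^{\rm op}(h)$, which by Lemma~\ref{rel2} is governed entirely by the innermost surviving letter and a scalar ($\B$-valued) coefficient coming from a conditional expectation of the cancelled middle part. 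So the strategy is to compute $P_1$ of each product using Lemma~\ref{rel2}, and then recognize the result as the right-hand side by keeping track of which paraproduct component $\overset{1,0}\dag$, $\overset{0,1}\dag$ or $\overset{1,1}\dag$ captures which cancellation pattern.

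Concretely, I would split into the three cases $l>n$, $l<n$, $l=n$ (together with the degenerate subcases $n=0$ or $l=0$, and the critical thresholds $l=n+1$, $n=l+1$). First recall, from the discussion after the definition of the paraproducts, that $x\overset{1,0}\dag y$ keeps the part of $xy$ in which every letter of $y$ has been cancelled (so the product ``ends like $x$'', not like $y$), $x\overset{0,1}\dag y$ keeps the part in which every letter of $x$ is cancelled, and $x\overset{1,1}\dag y$ keeps the ``generic'' part where neither side is fully absorbed. Now for $P_1$ to be nonzero we need essentially total cancellation of the two words against each other, so only a few of these patterns contribute in each case. For instance, when $l>n$ one expects all of $h$ (hence all of $T_\pi^{\rm op}(h)$) to be cancelled against the tail of $g$, leaving a single letter $g_1$ carrying $T_\pi=\pi_{i_1}$; this should match the term $T_\pi[P_1(g\overset{1,0}\dag T_\pi^{\rm op}(h))]$, using that $T_\pi$ acts only on the first letter (Lemma~\ref{rel1}(i)) and commutes with $P_1$ up to this first-letter bookkeeping. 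Symmetrically, when $n>l$ the surviving single letter is $h_1$ carrying $T_\pi^{\rm op}$, matching $T_\pi^{\rm op}[P_1(T_\pi(g)\overset{0,1}\dag h)]$ via Lemma~\ref{rel1}(ii). The balanced case $l=n$ is where, after stripping the outer letters $g_1$ and $h_1$, there is a leftover scalar $\bE(g'h')$ from the middle, and this is exactly the regime of the third paraproduct term $T_\pi[P_1(g\overset{1,1}\dag h)]$, with Lemma~\ref{rel2}(iii) doing the reduction and $P_1(g_1\bE(g'h')h_1)$ handling what happens when $g_1$ and $h_1$ themselves partly cancel.

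The bookkeeping nuisance — and I expect this to be the main obstacle — is to check that the three paraproduct pieces $\overset{1,0}\dag$, $\overset{0,1}\dag$, $\overset{1,1}\dag$ exactly partition the contributions without overlap or omission, particularly in the threshold subcases $l=n+1$ and $n=l+1$, and to verify that $T_\pi$ (resp. $T_\pi^{\rm op}$) applied after $P_1$ does the right thing even when the degree-one output is a $\B$-multiple of a letter from $\mr\A_{i_1}$ whose index $i_1$ may or may not survive the cancellation. One has to be careful that the defining property $\bE\circ\pi_i=\bE$ and $\pi_i|_\B=\mathrm{id}$ is used precisely where the cancelled middle part gets hit by $T_\pi$ before being fed into a conditional expectation, so that $T_\pi[\bE(\cdots)]=\bE(\cdots)$ and no spurious factor appears. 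Since all of this is a finite, mechanical case analysis entirely parallel to the proof of Lemma~\ref{cotlargeq2}, I would state that the verification ``is done by a routine case check using Lemmas~\ref{rel1} and~\ref{rel2}, and we omit the details,'' mirroring the style already adopted in the paper. No deep idea is needed beyond the observation that $P_1$ localizes everything to the outermost surviving letters, so the identity is really three disjoint elementary simplification formulas bundled together.
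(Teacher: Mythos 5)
Your proposal follows essentially the same route as the paper: reduce to elementary tensors, split into the cases $l>n$, $n>l$, $l=n$ via Lemma~\ref{rel2}, and identify the single surviving paraproduct term in each case (the $\overset{1,0}\dag$ term for $l>n$, the $\overset{0,1}\dag$ term for $n>l$, the $\overset{1,1}\dag$ term for $l=n$), exactly as the paper does. The "routine verification" you omit is precisely the short computation the paper supplies — showing the other two paraproduct terms have vanishing $P_1$ in each case, using $H_\e[P_1(H_\e(x)y)]=\e_{i_1}\e_{j_1}P_1(xy)$ together with $\E_\e(\e_{i_1}\e_{j_1})=0$ when $i_1\ne j_1$ and $\e_{i_1}^2=1$ when $i_1=j_1$ — so the plan is correct and complete in outline.
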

 
 \begin{proof}
 We will check the  formula according to the three cases in Lemma \ref{rel2} and use notation  there.  

For i), we can assume $n=l+1$, otherwise all terms are 0. Then 
  $$P_1\big(T_\pi(g)T_\pi ^{{\rm op}}(h)\big)= \E\big(T_\pi(g)h'\big)\,\pi_{j_1}(h_1).$$
On the other hand, for elementary tensors $x\in \m W_{(i_1,\cdots,i_l)}$ and $y\in \m W_{(j_n,\cdots ,j_1)}$, we have $x\overset{1,0}\dag y=0$ as the last letter of $y$ in $xy$ is not cancelled.  To deal with $x\overset{0,1}\dag y$,  note that 
  $$H_\e[P_1(H_\e(x)y)]=\e_{i_1}\e_{j_1}P_1(xy).$$
It thus follows that $P_1(x\overset{0,1}\dag y)=P_1(xy)$ since $\bE(xy')=0$ unless
 $i_1=j_2$ but then $j_1\neq j_2$ and $\E_\e(\e_{i_1}\e_{j_1})=0$. Hence the right hand
 side of \eqref{cot1} is exactly $T_\pi^{{\rm op}}\big[P_1\big(T_\pi(g) h\big)\big]=\E\big(T_\pi(g)h'\big)\pi_{j_1}(h_1)$.

\smallskip 

The case ii) is obtained from i) by passing to adjoints.

\smallskip 

For iii), let $x\in  \m W_{(i_1,\cdots, i_n)}$ and $y\in  \m W_{(j_n,\cdots,j_1)}$ be elementary tensors. Then $P_1(xy)=0$ unless $i_k=j_k$  for all $1\le k\le n$. Hence we can assume that $i_1=j_1$. Then noting  that 
 $$H_\e[P_1(H_\e(x)y)]=\e_{i_1}^2P_1(xy)=P_1(xy),$$ 
we deduce $P_1(x\overset{0,1}\dag y)=0$; by symmetry, $P_1(x\overset{1,0}\dag y)=0$ too. Thus the right hand side of \eqref{cot1} becomes
$$T_\pi\big[P_1\big( g\overset{1,1}\dag h\big)\big]=T_\pi\big[P_1\big( g_1\bE(g'h')h_1\big)\big]=\pi_{i_1}\big[P_1\big( g_1\bE(g'h')h_1\big)\big].$$
However, the left hand side is
 $$P_1\big[\pi_{i_1}(g_1) \bE(g'h')\pi_{i_1}(h_1)\big]=P_1\big[\pi_{i_1}\big( g_1\bE(g'h')h_1\big)\big]=\pi_{i_1}\big[P_1\big( g_1\bE(g'h')h_1\big)\big]$$ 
for $\pi_{i_1}$ is a $*$-representation, leaves the elements of $\B$ invariant and $P_1\pi_{i_1}=\pi_{i_1}P_1$.  \eqref{cot1} is thus proved in the case $l=n$ too.
 \end{proof}

The following is an intermediate result to Theorem~\ref{letd}, it will be the key for the reduction formula in Theorem~\ref{redform} below.

 \begin{thm}\label{rep}
  The map $T_\pi$ extends to a completely bounded map on $L_p(\A)$ for all $2\leq p<\8$ with cb-norm majorized by a constant depending only on $p$.
 \end{thm}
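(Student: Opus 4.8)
The plan is to run a Cotlar–Stein type argument on $L_p(\A)$, exploiting the decomposition of the product into its length-one part and its length-$\ge 2$ part together with the two Cotlar formulas (Lemmas~\ref{cotlargeq2} and \ref{cotlarle2}) just established. Concretely, I would represent the operator of left multiplication by an element of $\m W$, or rather the bilinear product map $(g,h)\mapsto gh$, and try to bound $T_\pi(g)T_\pi^{\rm op}(h)$ in $L_p(\A)$ in terms of $gh$. The idea is that $T_\pi$ and $T_\pi^{\rm op}$ differ from $\mathrm{Id}$ by operators that factor through the column and row modular $L_p$-spaces of Section~\ref{Norms on modules}, on which the relevant building blocks ($\mathcal H_\e$, $\mathcal H_\e^{\rm op}$, $\E$, the $P_n$, and the $*$-representations $\pi_i$) act as bounded — indeed completely bounded — modular maps by Propositions~\ref{2cb} and \ref{pcb} and Lemma~\ref{MR17}.

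First I would reduce to $x\in\m W$ and estimate $\|T_\pi(x)\|_p$. Writing $x$ as a sum over lengths, I would use that $T_\pi$ commutes with each $P_n$, so it suffices to control $\|T_\pi(x)\|_p$ for $x\in\m W_n$ with constants independent of $n$ (the $P_n$ themselves being cb with norm $\le 2n+1$, but one wants to avoid summing these; this is where the Cotlar formulas, which only ever produce $P_{\ge 2}$ and $P_1$ pieces, are essential). Next, for $x\in\m W_n$ I would write $x = \sum g\, h$ as a sum of products of a length-$n-1$ piece and a length-one piece (peeling off the last letter), or more symmetrically split at a middle letter, so that the Cotlar identities \eqref{cot2} and \eqref{cot1} apply with $g,h$ ranging over shorter words. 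Applying $P_{\ge 2}$ and $P_1$ separately, Lemma~\ref{cotlargeq2} expresses $P_{\ge2}[T_\pi(g)T_\pi^{\rm op}(h)]$ through $T_\pi$, $T_\pi^{\rm op}$ and $T_\pi T_\pi^{\rm op}$ applied to genuine products, and Lemma~\ref{cotlarle2} does the same for the length-one part using the paraproducts $\dag$. Each paraproduct, by its definition via $\mathcal H_\e$, $\mathcal H_\e^{\rm op}$ and $\E_\e$, is bounded on $L_p(\A)$ by Lemma~\ref{MR17}; and $T_\pi$, $T_\pi^{\rm op}$ act boundedly because, after embedding $L_p(\A)$ into a suitable column/row modular space built from the $\A_i$'s, $T_\pi$ acts as $\pi_{i_1}\otimes\mathrm{Id}$, i.e.\ like $\mathrm{Id}_{E'}\otimes(\text{modular map})\otimes\mathrm{Id}$, which Proposition~\ref{2cb} (and its row/bimodule variants, plus Proposition~\ref{pcb}) bound by $\|\pi_{i_1}\|=1$ since $\pi_{i_1}$ is a $*$-representation. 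Iterating this reduction in the length then yields a bound on $\|T_\pi(x)\|_p$ by a $p$-dependent constant times $\|x\|_p$, and the same argument with $\mathrm{Id}_{S_p}\otimes(\cdot)$, i.e.\ working over $B(\ell_2)\bar\otimes\A$, gives the complete boundedness; for $p\ge 2$ one also has the $L_2$ bound for free (Lemma~\ref{l2norm}), which is needed to start/close the induction.

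The main obstacle I anticipate is organizing the induction on length so that the constants do not blow up: each application of a Cotlar formula replaces a single operator by a sum of three operators applied to products of shorter words, and naively this gives an exponential-in-$n$ constant. The resolution should be that the Cotlar formulas are genuinely telescoping — the $T_\pi T_\pi^{\rm op}(gh)$ term reassembles into something of the same form at the next stage, and the $P_1$ versus $P_{\ge 2}$ split is exactly what prevents double counting — so that one really gets a fixed-point/Neumann-series estimate rather than an iterated one, with the geometry controlled by the boundedness of the free Hilbert transforms $\mathcal H_\e,\mathcal H_\e^{\rm op}$ on $L_p$. A second, more technical point is keeping careful track of which modular column/row space each intermediate quantity lives in, since $T_\pi$ modifies the first letter (a column-type operation) while $T_\pi^{\rm op}$ modifies the last letter (a row-type operation), so the relevant ambient space is a two-sided modular $L_p$-space $E^c\otimes_p L_p(\M)\otimes_p F^r$; the injectivity remark and the associativity of these constructions noted in Section~\ref{Norms on modules} are what make this bookkeeping legitimate.
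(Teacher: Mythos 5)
Your proposal correctly identifies the ingredients (reduction of complete boundedness to boundedness, the two Cotlar formulas, the boundedness of the paraproducts via Lemma~\ref{MR17}, and the $L_2$ isometry as a starting point), but it misses the actual mechanism of the proof and replaces it with two steps that do not work. The paper's argument is \emph{not} an induction on word length: it is a bootstrap in the exponent. One writes
$\|T_\pi(x)\|_{2p}^2=\|T_\pi(x)T_\pi(x)^*\|_p=\|T_\pi(x)T_\pi^{\rm op}(x^*)\|_p$
and applies the Cotlar formulas with $g=x$ and $h=x^*$ (the whole elements, not shorter factors), splitting the product into its $P_{\geq2}$, $P_1$ and $\E$ parts. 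Denoting by $\gamma_p$ the (a priori finite, by hypothesis of the induction on $p$) norm of $T_\pi$ on $L_p$, formula \eqref{cot2} bounds the $P_{\geq2}$ part by $\gamma_p(2\|x\|_{2p}\|T_\pi(x)\|_{2p}+\gamma_p\|x\|_{2p}^2)$, formula \eqref{cot1} together with the $L_{2p}\times L_{2p}\to L_p$ boundedness of the paraproducts handles the $P_1$ part similarly, and the $\E$ part is trivial. This yields a quadratic inequality in $\|T_\pi(x)\|_{2p}$ whose resolution gives $\gamma_{2p}\lesssim\gamma_p\eta_p$; starting from $\gamma_2=1$, iterating over $p=2,4,8,\dots$ and interpolating covers all $2\leq p<\infty$. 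Your proposed induction on the length $n$ of words runs into exactly the exponential blow-up you flag, and the ``telescoping'' you hope for is not substantiated and is not how the difficulty is resolved; nothing in Lemmas~\ref{cotlargeq2} and \ref{cotlarle2} reassembles the three terms into a single operator of the same form at the next length.

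A second, more serious problem is circularity. You claim that $T_\pi$ acts boundedly on $L_p(\A)$ because, after embedding $L_p(\A)$ into a column/row modular space, it becomes $\pi_{i_1}\otimes{\rm Id}$ and Propositions~\ref{2cb} and \ref{pcb} apply. But the equivalence between the $L_p(\A)$-norm and those modular norms is precisely the content of the length reduction formula (Theorem~\ref{redform}), which in this paper is \emph{deduced from} Theorem~\ref{rep} (via the swap representations $\pi_i$ on $\A^{(1)}*_\B\A^{(2)}$). At the stage where Theorem~\ref{rep} is being proved, no such identification is available; the only tools at hand are the $P_n$'s, the free Hilbert transforms of Lemma~\ref{MR17}, and the algebraic Cotlar identities. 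If your modular-space argument worked directly, the Cotlar formulas would be superfluous. The proof of Theorem~\ref{rep} deliberately avoids the machinery of Section~\ref{Norms on modules} altogether.
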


 \begin{proof}
 As the $\pi_i$'s are trace preserving and leave $\B$ invariant,
 $T_\pi$ is an isometry on $\m W$ for the $L_2(\A)$-norm, thus it
 extends to an isometry on $L_2(\A)$.  We need only
 to prove the boundedness of $T_\pi$ since the complete boundedness
 will then be automatic thanks to the usual trick of replacing $\B$ by
 the matrix algebra $\mathbb{M}_n(\B)$.

As in \cite{MR17}, we show that the $L_p$-boundedness of $T_\pi$  implies its $L_{2p}$-boundedness for $2\leq p<\infty$. Starting with $p=2$, using iteration and interpolation, we will deduce the assertion for the full range $2<p<\infty$. In the following, we will denote by $\gamma_p$ the norm of $T_\pi$ and $T_\pi^{{\rm op}}$ on $\m W$ equipped with the  $L_p$-norm. 

For  $x\in \m W$ we write
 \begin{align*}
 T_\pi(x)T_\pi(x)^*&=T_\pi(x)T_\pi^{{\rm op}}(x^*)\\
 &=P_{\geq 2}[T_\pi(x)T_\pi^{{\rm op}}(x^*)]+P_1[T_\pi(x)T_\pi^{{\rm op}}(x^*)]+ \E [T_\pi(x)T_\pi^{{\rm op}}(x^*)].
 \end{align*}
By \eqref{cot2} and the fact that $P_{\geq 2}$ has norm less than 5 on $L_p(\A)$, we have
 $$\big\| P_{\geq 2}[T_\pi(x)T_\pi^{{\rm op}}(x^*)]\big\|_p\leq 5\,\big[2\gamma_p  \|x\|_{2p}\|T_\pi(x)\|_{2p}+\gamma_p^2\|x\|_{2p}^2\big].$$ 
On the other hand, by \cite[Proposition 3.14]{MR17} or Lemma~\ref{MR17}, the paraproducts $\overset{i,j}\dag$ are bounded from $L_{2p}(\A)\times L_{2p}(\A)$ to $L_p(\A)$ with norm less than $\eta_p$. Thus using \eqref{cot1} and the fact that $P_{1}$ has norm less than 3 on $L_p(\A)$, we get
  $$\big\| P_{1}[T_\pi(x)T_\pi^{{\rm op}}(x^*)]\big\|_p\leq 3\eta_{p}\gamma_p \big[2\|x\|_{2p}\|T_\pi(x)\|_{2p} + \|x\|_{2p}^2\big].$$
Clearly, 
 $$\big\|\bE [T_\pi(x)T_\pi^{op}(x^*)]\big\|_p=\|\bE(xx^*)\|_p\le \|x\|_p^2.$$ 
So combining all the estimates yields
 $$\|T_\pi(x)\|_{2p}^2 \leq \gamma_p \big(10+6\eta_{p})\|x\|_{2p}\|T_\pi(x)\|_{2p}+ (1+3\eta_{p}\gamma_p+5\gamma_p^2) \|x\|_p^2.$$
 It thus follows that
  $$\|T_\pi(x)\|_{2p}\leq C\gamma_p \eta_{p}\|x\|_{2p}$$ 
for some absolute constant $C$, whence $\g_{2p}\le C\gamma_p \eta_{p}$. This finishes the proof.
 \end{proof}

  \subsection{A length reduction formula}

 We show here how to recursively estimate the $L_p$-norm of an element in 
 $\m W$ in the spirit of \cite{JPX07}. The space $\m W$ is naturally a right Hilbert $\B$-module with inner product $\la x,\,y\ra  =\E(x^*y)$ and also a left Hilbert $\B$-module with $\la\la x,\,y\ra\ra =\E(xy^*)$. The same holds for  $\m W_1$ and $\mathring {\m W}$ as submodules. A typical element in $\m W$ can be written as a finite sum 
 \beq\label{dec}
 x= x_0 + x_1 + \sum_{i,\alpha} a_i(\alpha)\otimes b_i(\alpha){\mathop=^{\rm def}} x_0 + x_1 +z,
 \eeq
 where $x_0\in \B$, $x_1\in \m W_1$ and $a_i(\alpha)\in \mathring{\m A_i}$ and 
 $b_i(\alpha)\in \mathring {\m W}$ with $L_i(b_i(\alpha))=0$.

The following result extends the main result of \cite{JPX07} on homogeneous polynomials to any polynomials, which is the key tool for the argument in the next subsection.

 \begin{thm}\label{redform}
   With the notation above, for $2\le p<\infty$,  we have $($considering $z\in
   \mathcal W_1\otimes \mathring{\m W})$
  \beq\label{rec}
  \|x\|_p 
   \approx_p  \|x_0\|_p + \|x_1\|_p + \| z\|_{\m W_1^c\otimes_p L_p(\m A)}  +\|z\|_{L_p(\m A)\otimes_p \mathring{\m W}^r}.
 \eeq
 \end{thm}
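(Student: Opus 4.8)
The plan is to prove the two-sided estimate by separating the ``$\gtrsim_p$'' and ``$\lesssim_p$'' directions, and to extract the pieces $x_0$, $x_1$, $z$ from $x$ using the projections $P_0=\E$, $P_1$, and $P_{\ge2}$, which are completely bounded on $L_p(\A)$ with constants independent of $p$. The lower bound $\|x\|_p\gtrsim_p \|x_0\|_p+\|x_1\|_p$ is then immediate; for the $z$-terms one notes that $z=P_{\ge2}(x)$ and that $\|z\|_{\m W_1^c\otimes_p L_p(\m A)}$ is, up to the identifications of Section~\ref{Norms on modules}, $\|\,(\sum_{i,\alpha}\la a_i(\alpha),a_j(\beta)\ra)^{1/2}$-type expression. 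The crucial observation for the lower bound on the module norms is that the map sending $a_i(\alpha)\otimes b_i(\alpha)$ to $T_\pi$-type data, or more directly the identity $\|z\|_{\m W_1^c\otimes_p L_p(\m A)}=\|\,|z\ra\,\|$ controlled by $\|z\|_p$ via the factorization $z=\sum a_i(\alpha)\otimes b_i(\alpha)$ viewed inside $L_p(\A)$: precisely, writing $z$ as a column vector over the index $(i,\alpha)$ with entries $a_i(\alpha)$ acting on the left and $b_i(\alpha)$ on the right, a standard $L_p$ triangle/H\"older argument gives $\|z\|_{\m W_1^c\otimes_p L_p(\A)}\lesssim_p\|z\|_p$, and symmetrically for the row norm. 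I would carry this out by invoking the embedding $u\otimes\mathrm{Id}$ into $L_p(B(\ell_2(\m I))\bar\otimes\M)$ and the fact that the relevant map is nothing but $z\mapsto$ its column representation, whose boundedness $L_p(\A)\to$ column space is exactly the content of the Cotlar-type estimates already available.

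For the harder direction, $\|x\|_p\lesssim_p$ RHS, I would proceed as in \cite{JPX07}: decompose $\|x\|_p=\|x_0+x_1+z\|_p\le\|x_0\|_p+\|x_1\|_p+\|z\|_p$ and concentrate on bounding $\|z\|_p$ alone (since $z=P_{\ge2}(x)$ has cb-norm $\le5$ on $L_p(\A)$, this reduction is free), and then estimate $\|z\|_p$ by $\|z\|_{\m W_1^c\otimes_p L_p(\A)}+\|z\|_{L_p(\A)\otimes_p\mathring{\m W}^r}$. The natural device is to compute $zz^*$ and $z^*z$ and run a Cotlar-type argument: $\|z\|_p^2=\|zz^*\|_{p/2}$ (if $p\ge2$), and $zz^*=\sum_{i,\alpha,j,\beta}a_i(\alpha)\E(b_i(\alpha)b_j(\beta)^*)a_j(\beta)^*$ expands, after centering, into a ``diagonal'' part that is exactly controlled by $\|z\|_{\m W_1^c\otimes_p L_p(\A)}$ (it has the form $\sum a_i(\alpha)\la\la b_i(\alpha),b_j(\beta)\ra\ra a_j(\beta)^*$ with the $\B$-valued inner product sitting in the middle) plus off-diagonal error terms that do not simplify to length $\le$ something and hence are handled by $P_{\ge2}$ together with the free Hilbert transform/paraproduct bounds of Lemma~\ref{MR17}, interpolating between $p$ and $2p$ exactly as in the proof of Theorem~\ref{rep}. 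Symmetrically $z^*z$ produces the row norm. One then combines the two to close the estimate, possibly bootstrapping from $p=2$ (where \eqref{rec} is an isometric identity by orthogonality of the $\m W_{\un i}$) up through $p=4,8,\dots$ and interpolating.

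The main obstacle, as in \cite{JPX07} and in Theorem~\ref{rep}, is controlling the off-diagonal cross terms in the expansion of $zz^*$ and $z^*z$: when one multiplies $\sum_{i,\alpha}a_i(\alpha)\otimes b_i(\alpha)$ by its adjoint, the products $b_i(\alpha)b_j(\beta)^*$ only partially simplify, and the genuinely problematic contributions are those where there is cancellation between the $b$'s but the first letters $a_i(\alpha)$, $a_j(\beta)$ survive — these land in $\m W_{\ge2}$ and must be reabsorbed. The trick is precisely the one used for Theorem~\ref{rep}: split off $P_1$ and $P_{\ge2}$, use the algebraic Cotlar identities (the analogues of Lemmas~\ref{cotlargeq2} and \ref{cotlarle2}) to rewrite these error terms through the paraproducts $\overset{i,j}\dag$ and the operator $T_\pi$ associated to the trivial representations $\pi_i=\mathrm{Id}$, and then use the $L_{2p}\times L_{2p}\to L_p$ boundedness of the paraproducts together with Theorem~\ref{rep} to get a recursive inequality $\gamma_{2p}\lesssim\gamma_p\cdot(\text{const})$ for the reduction constant. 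I expect the bookkeeping of which index pairs $(i,\alpha),(j,\beta)$ contribute to which projection $P_n$ to be the most delicate part, exactly because the condition $L_i(b_i(\alpha))=0$ in \eqref{dec} is what makes the ``diagonal'' extraction clean, and one must verify it is preserved under the manipulations.
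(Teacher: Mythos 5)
Your outline misses the single idea on which the paper's proof rests, and the route you propose instead has a genuine gap at its central step. The paper never expands $zz^*$ and $z^*z$: it doubles the algebra, writes $\m A^{(1)}*_\B\m A^{(2)}=*_{i\in I,\B}\big(\m A_i^{(1)}*_\B\m A_i^{(2)}\big)$ by associativity, and applies Theorem~\ref{rep} to the \emph{swap} $*$-representations $\pi_i$ exchanging the two copies inside each $\m A_i^{(1)}*_\B\m A_i^{(2)}$. Since $\pi_i^2={\rm Id}$, this gives $\|z\|_p\simeq_p\big\|\sum_{i,\alpha}a_i(\alpha)^{(2)}\otimes b_i(\alpha)^{(1)}\big\|_p$, and the right-hand element is homogeneous of degree $2$ for the two-factor free product of $\m A^{(2)}$ and $\m A^{(1)}$ over $\B$ (here is where $L_i(b_i(\alpha))=0$ is used: it guarantees the word $a_i(\alpha)^{(2)}\otimes b_i(\alpha)^{(1)}$ is reduced of length exactly $2$ in the coarser decomposition). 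The already-proved homogeneous Khintchine inequality \cite[Theorem~B]{JPX07} then delivers both module norms at once, in both directions. In other words, the whole theorem is a reduction of the non-homogeneous case to the known homogeneous one; nothing is proved from scratch.

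In your sketch, by contrast: (i) the direction $\|z\|_{\m W_1^c\otimes_p L_p(\m A)}+\|z\|_{L_p(\m A)\otimes_p\mathring{\m W}^r}\lesssim_p\|z\|_p$ is not a triangle/H\"older estimate --- it is one half of a Khintchine-type equivalence and needs the same machinery as the other half; (ii) for the hard direction, the Cotlar identities of Lemmas~\ref{cotlargeq2} and \ref{cotlarle2} with $\pi_i={\rm Id}$ are tautologies (then $T_\pi={\rm Id}$), so they give no handle on precisely the problematic cross terms of $zz^*$ in which the $b$'s cancel while $a_i(\alpha)$ and $a_j(\beta)^*$ survive; those terms are exactly what the theorem is about, and Lemma~\ref{MR17} bounds products in $L_p(\m A)$, not the Hilbert-module norms on the right of \eqref{rec}. (Incidentally, the "diagonal" term $\sum a_i(\alpha)\E(b_i(\alpha)b_i(\beta)^*)a_i(\beta)^*$ you extract belongs to the row norm $\|z\|_{L_p(\m A)\otimes_p\mathring{\m W}^r}$, not the column norm --- see Remark~\ref{explicit}.) Finally, it is not clear that a $p\to 2p$ bootstrap closes here, since $\|zz^*\|_p$ does not reduce to the same equivalence constant at exponent $p$ the way $\|T_\pi(x)T_\pi(x)^*\|_p$ does in Theorem~\ref{rep}. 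Without the doubling/swap reduction to \cite[Theorem~B]{JPX07}, you would essentially have to reprove that theorem in the non-homogeneous setting, and your outline does not supply the mechanism for doing so.
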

 \begin{proof}
 By the boundedness of the projections $P_0$ and $P_1$, it suffices to prove the estimate
 for $z$. To this end, we  consider two copies of $\A$, and put a superscript 
 to distinguish them.  We use associativity of the free product to write
 $$ \A^{(1)} *_\B  \A^{(2)}=\big (*_{i\in I, \,\B} \A_i^{(1)}\big)*_\B \big(*_{i\in I,\,\B} \A_i^{(2)}\big)=*_{i\in I,\,\B} \big(\m A_i^{(1)}*_{\B}\m A_i^{(2)}\big){\mathop=^{\rm def}}*_{i\in I,\,\B}\wt {\m A}_i.$$
 Since the traces are compatible,
 $\|x\|_{L_p(\m A)}= \|x^{(1)}\|_{L_p(\m A^{(1)}*_\B \m A^{(2)})}$ for every $x\in L_p(\A)$.

 For each $i$, we define the swap map $\pi_i$ on $\wt  {\m A}_i$ by
  \begin{align*}
  &a_1^{(1)}\tens a_2^{(2)}\tens a_3^{(1)}\tens\cdots\mapsto a_1^{(2)}\tens a_2^{(1)}\tens a_3^{(2)}\tens\cdots\\
  &a_1^{(2)}\tens a_2^{(1)}\tens a_3^{(2)}\tens\cdots\mapsto a_1^{(1)}\tens a_2^{(2)}\tens a_3^{(1)}\tens\cdots
  \end{align*}
for $\B$-centered elements $a_k^{(j)}$ and by $\pi_i(b)=b$ for $b\in \B$. It is clear that $\pi_i$ is a $*$-representation, $\bE$-preserving and leaves the elements of $\B$ invariant. Thus, noticing that $\pi_i^2={\rm Id}_{\wt {\m A}_i}$, we use Theorem \ref{rep} to get
  $$\Big\| \sum_{i,\alpha} a_i(\alpha)\otimes b_i(\alpha)\Big\|_{L_p(\m A)}\simeq_p 
  \Big\| \sum_{i,\alpha} a_i(\alpha)^{(2)}\otimes b_i(\alpha)^{(1)}\Big\|_{L_p(\m A^{(1)}*_\B\m A^{(2)})}.$$ 
 The element
 $\sum_{i,\alpha} a_i(\alpha)^{(2)}\otimes b_i(\alpha)^{(1)}$ is
 homogeneous of degree 2 with respect to the length of $\m A^{(1)}*_\B\m
 A^{(2)}$.  Thus by \cite[Theorem~B]{JPX07} 
  \begin{align*}
  \Big\|\sum_{i,\alpha} a_i(\alpha)^{(2)}\otimes b_i(\alpha)^{(1)}\Big\|_{L_p(\m A^{(1)}*_\B\m A^{(2)})}
  \simeq
  \Big\| \sum_{i,\alpha} |a_i(\alpha)^{(2)}\ra b_i^{(1)}(\alpha)\Big\|_{p}  
  +\Big\| \sum_{i,\alpha} a_i(\alpha)^{(2)}  \la b_i^{(1)}(\alpha)|\Big\|_{p}\,.
  \end{align*}
Since taking copies clearly does not change norms, we have
  \begin{align*}
  \Big\| \sum_{i,\alpha} |a_i(\alpha)^{(2)}\ra b_i^{(1)}(\alpha)\Big\|_{p}  
  &=\Big\| \sum_{i,\alpha} a_i(\alpha)^{(2)}\otimes b_i^{(1)}(\alpha)\Big\|_{(\m W_1^{(2)})^c\ot_pL_p(\m A^{(1)}*_\B\m A^{(2)})}\\
  &= \Big\| \sum_{i,\alpha} a_i(\alpha)\otimes b_i(\alpha)\Big\|_{\m W_1^c\otimes_p L_p(\m A)}\,.\\
    \end{align*}
  Similarly, 
  $$\Big\| \sum_{i,\alpha} a_i(\alpha)^{(2)} \la b_i^{(1)}(\alpha)|\Big\|_{p}
  = \Big\| \sum_{i,\alpha} a_i(\alpha)\tens  b_i(\alpha)\Big\|_{L_p(\m A)\otimes_p  {\mathring {\m W}}^r}.$$
This concludes the proof of the theorem.
 \end{proof}

 \begin{rk}\label{explicit}
 We could also have used \cite[Theorem C]{JPX07} to make some terms  more
 explicit. Namely,
 $$ \|x_1\|_p \simeq \big\|\big(\bE(x_1^*x_1)\big)^{1/2}\big\|_p + \Big(\sum_i \|L_i(x_1)\|_p^p\Big)^{1/p} + \big\| \big(\bE(x_1x_1^*)\big)^{1/2}\big\|_p,$$
 and 
 \begin{align*}
 \|z\|_{L_p(\m A)\otimes_p \mathring {\m W}^r}
 &\simeq_p  \big\|z \big\|_ {\m W_1^c\otimes\mathring {\m W}^r}+ \big\|\big(\bE(zz^*)\big)^{1/2}\big\|_p \\  
 & \quad+
  \Big(\sum_{i} \big\| \big(\sum_{\alpha,\beta} a_i(\alpha)\bE( b_i(\alpha)b_i(\beta)^*)a_i(\beta)\big)^{1/2} \big\|_p^p\Big)^{1/p}.
 \end{align*}
 Here $\m W_1^c\otimes\mathring {\m W}^r$ has to be understood as
 $\m W_1^c\otimes_p L_p(\B)\otimes_p\mathring {\m W}^r$.
 \end{rk}

 \begin{rk}\label{jpxb}
  It is now rather easy to get an analogue of \cite[Theorem
    C]{JPX07}. The norm of the last term $\|z\|_{\m W_1^c\otimes_p
    L_p(\m A)}$ corresponds to that of an element in $S_p\otimes_p
  L_p(\m A)$ and we can formally iterate the argument. Thus, we can write the norm of $x\in P_{\geq
    k}\big(L_p(\A)\big)$ as a sum of $2k+1$ norms. For simplicity
  assume $x\in P_{\geq k}(\m W)$, they are given by
  \begin{align*}
    & \| x\|_{\mathcal W_l^c\otimes\m W^r}, \quad 0\leq l\leq k, \\
    &  \| x\|_{\mathcal W_l^c\otimes_p (\oplus_p \mathring{L_p(\m A_i))}\otimes_p \m W^r}, \quad 0\leq l\leq k-2 \\
  &  \| x\|_{\mathcal W_{k-1}^c\otimes_p L_p(\m A)}.  
    \end{align*}
The last one being recursive. We leave the details
  to the interested reader.
 \end{rk}
     
 \subsection{Maps of the $d$-th letters and the proof of Theorem~\ref{main3}}

This subsection contains our principal result that is the key step of the proof of Theorem~\ref{main3}. Fix $1<p<\8$. Given a family of maps $T_i: L_p(\A_i)\to L_p(\A_i)$ we will define an associated map of the $d$-th letters of reduced words in $\m W$ for $d\ge1$. The minimal assumption required for the $T_i$'s is the following 
 \begin{itemize}
 \item[{\bf (H$_1$)}]  $T_i$ is  $\B$-bimodular and  $T_{i}\big(\widering{L_q(\A_i)}\big)\subset \widering{L_q(\A_i)}$ for $q=2$ and $q=p$.
  \item[{\bf (H$_2$)}] $T_i: L_q(\A_i)\to L_q(\A_i)$ is completely bounded and
 $${\rm cb}_q=\sup_{i\in I} \|T_i\|_{{\rm cb}(L_q(\m A_i))}<\infty \;\;\text{ for }q=2 \text{ and }q=p.$$
 \end{itemize}
Note that the cb-norm of $T_i$ on $L_2(\m A_i)$ coincides with its usual norm for $L_2(\m A_i)$ is a homogeneous operator space. On the other hand, it is obvious that
 $${\rm cb}_q=\big\|\oplus_i T_i: L_q(\oplus_i\A_i)\to  L_q(\oplus_i\A_i)\big\|_{\rm cb}\,.$$
Now we define a linear map $T^{(d)}$ on $\m W$ by $T^{(d)}(b)=b$ for $b\in \B$ and 
  $$T^{(d)}(a_1\tens\cdots\tens a_n)
  = \left\{\begin{array}{lcl}
 a_1 \tens\cdots\tens a_{d-1}\tens T_{i_d}(a_d)\tens a_{d+1}\tens\cdots\tens a_n& \textrm{if}& d\le n,\\
 a_1 \tens \cdots \tens a_n& \textrm{if}& d> n\end{array}\right. $$
 for $n\geq 1$ and $a_1\tens\cdots\tens a_n\in \m W_{\un i}$ with $\un i=(i_1,\cdots,i_n)$ and $i_1\neq\cdots\neq i_n$.  Note that the range
   of $T^{(d)}$ is not inside $\m W$ but clearly in $L_p(\m A)$.

\smallskip
 
\begin{thm}\label{letd}
 Under the hypotheses {\bf (H$_1$)} and {\bf (H$_2$)},  $T^{(d)}$ extends to a completely bounded map on $L_p(\A)$ with
 $$\|T^{(d)}\|_{{\rm cb}(L_p(\A))}\lesssim_{p,d}   {\rm cb}_2+{\rm cb}_p.$$
\end{thm}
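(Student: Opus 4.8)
The goal is to control $T^{(d)}$, the operator acting on the $d$-th letter of reduced words, using only the $L_2$- and $L_p$-cb-norms of the single-letter maps $T_i$. The strategy is an induction on $d$ together with the length reduction formula of Theorem~\ref{redform}. For the base case $d=1$, note that $T^{(1)}$ coincides (on $\mr{\m W}$-type elements) with the map $z = \sum_{i,\alpha} a_i(\alpha)\otimes b_i(\alpha) \mapsto \sum_{i,\alpha} T_{i}(a_i(\alpha))\otimes b_i(\alpha)$, acting only on the ``head'' letter $a_i(\alpha)\in\mr{\m A}_i$. By Theorem~\ref{redform}, $\|z\|_p$ is comparable to the sum of $\|z_0\|_p$, $\|z_1\|_p$, the column norm $\|z\|_{\m W_1^c\otimes_p L_p(\m A)}$, and the row norm $\|z\|_{L_p(\m A)\otimes_p \mr{\m W}^r}$. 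On the column side, $T^{(1)}\otimes \mathrm{Id}$ acts as $(\oplus_i T_i)$ on the $\m W_1^c$-factor, which embeds in a column of $L_p(B(\ell_2)\bar\otimes(\oplus_i \m A_i))$; by Proposition~\ref{2cb} (and its bimodule/two-sided refinements, since the $T_i$ are $\B$-bimodular by {\bf (H$_1$)}) together with Proposition~\ref{pcb}, this is bounded by $\mathrm{cb}_p$ --- wait, more carefully: on the column $L_p$-norm the relevant estimate is the $L_p$-cb-norm, giving a factor $\mathrm{cb}_p$. On the row side one needs instead to control $\|(\mathrm{Id}\otimes T^{(1)}\otimes\mathrm{Id})z\|_{L_p(\m A)\otimes_p \mr{\m W}^r}$ where $T^{(1)}$ now hits a factor sitting \emph{inside} $L_p(\m A)$ rather than in the outer row module; here one uses that the row module $\mr{\m W}^r$ has been split off, and what remains is an $L_p$-operator-valued object on which $T^{(1)}$ acts --- but since the letter being modified is in $\mr{\m A}_i\subset \m A_i$, and we have recursively reduced the length, this is where the $L_2$ hypothesis enters: after peeling the row, the ``interior'' copy of $a_i(\alpha)$ lives in $L_p(\m A)\otimes_p\mr{\m W}^r \simeq$ a subspace of $L_p(B(\ell_2)\bar\otimes\m A)$, and applying $T^{(1)}$ there is controlled by $\mathrm{cb}_p$ again. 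Actually the cleanest route, and the one the authors presumably take: use Theorem~\ref{redform} to write $\|z\|_p \approx \|z\|_{\m W_1^c\otimes_p L_p(\m A)} + \|z\|_{L_p(\m A)\otimes_p\mr{\m W}^r}$ and observe that $T^{(1)}$ acts on the \emph{left} factor $\m W_1^c = (\oplus_i\mr{\m A}_i)$ in the first norm (bounded by $\mathrm{cb}_p$ via Prop.~\ref{2cb} in its Hilbert-module guise, since $\m W_1^c\otimes_p L_p(\m A)$ embeds in a column space), while in the second norm $\mr{\m W}^r$ is untouched and $T^{(1)}$ acts on the first $L_p(\m A)$-factor which, \emph{re-expanding} $L_p(\m A)$ via the reduction formula once more, again reduces to acting on a head letter --- but now we must break the recursion: the first $L_p(\m A)$ factor contains words of \emph{all} lengths, including length $0$ and $1$, and on those $T^{(1)}$ acts trivially or as $\oplus_i T_i$, so no genuine recursion is needed, only the embedding $\m W_1^c\otimes_p L_p(\m A)\hookrightarrow$ column-$L_p$ and $L_p(\m A)\otimes_p\mr{\m W}^r\hookrightarrow$ row-$L_p$, on which $T^{(1)}$ is intertwined with $(\oplus_i T_i)\otimes\mathrm{Id}$, bounded by $\mathrm{cb}_p$ on the $L_p$-factor. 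This handles $d=1$.

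\textbf{The inductive step.} For $d\ge 2$, observe the factorization $T^{(d)} = S \circ T^{(d-1)}\circ S$ is \emph{not} quite available, so instead one uses the associativity of the amalgamated free product. Write $\m A = \m A_{i_1} *_\B \big(*_{j\ne i_1,\B}\m A_j\big)$ --- but this depends on $i_1$, so the right move is the one used for the reduction formula: pass to a doubled free product or, better, iterate Theorem~\ref{redform}. By Remark~\ref{jpxb}, for $x\in P_{\ge k}(\m W)$ we can express $\|x\|_p$ as a sum of roughly $2k+1$ norms of the shape $\|x\|_{\m W_l^c\otimes_p L_p(\m A)}$ and mixed column/row/middle norms; crucially, the outermost one $\|x\|_{\m W_{k-1}^c\otimes_p L_p(\m A)}$ peels off the first $k-1$ letters into a column module $\m W_{k-1}^c$ (a subspace of $S_p$) while leaving the remaining word-tail in $L_p(\m A)$. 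Taking $k=d$, in $\|x\|_{\m W_{d-1}^c\otimes_p L_p(\m A)}$ the map $T^{(d)}$ acts as $\mathrm{Id}_{\m W_{d-1}^c}\otimes T^{(1)}$ on $L_p(\m A)$ (because the $d$-th letter of the full word is the \emph{first} letter of the tail), so Proposition~\ref{pcb} plus the already-established $d=1$ case bounds it by $\lesssim_p \mathrm{cb}_2 + \mathrm{cb}_p$. For the other $\approx 2d$ norms in Remark~\ref{jpxb}, which involve column factors $\m W_l^c$ with $l\le d-2$, the $d$-th letter sits inside the remaining tail-module (a $\m W^r$ or a middle $\oplus_p\mr{L_p(\m A_i)}$ or another $L_p(\m A)$), and $T^{(d)}$ again factors through $T^{(1)}$ acting on that tail's head-letter; one applies Proposition~\ref{2cb}/\ref{pcb} and the $d=1$ result. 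Summing the $O(d)$ contributions and absorbing the norms $\le 2l+1$ of the projections $P_l$ (uniformly bounded for the finitely many relevant $l\le d$) gives the constant $\lesssim_{p,d}$. One should treat $1<p<2$ by duality: $T^{(d)}$ on $L_p(\m A)$ is (up to the canonical anti-isomorphisms) adjoint to $T^{(d)}_*$ built from the $L_{p'}$-adjoints $T_i^*$, which satisfy {\bf (H$_1$)},{\bf (H$_2$)} with $\mathrm{cb}_{p'} = \mathrm{cb}_p$ and $\mathrm{cb}_2$ unchanged, so the case $p\ge 2$ suffices.

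\textbf{The main obstacle.} The delicate point is the bookkeeping in the reduction/iteration: one must verify that when Theorem~\ref{redform} (or its iterate, Remark~\ref{jpxb}) splits a long word into a column module tensored with $L_p(\m A)$ plus correction terms, the operator $T^{(d)}$ is genuinely \emph{intertwined} with $\mathrm{Id}\otimes T^{(1)}$ (or $\mathrm{Id}\otimes T^{(1)}\otimes\mathrm{Id}$) on the relevant module, with no cross-terms --- this requires exactly the $\B$-bimodularity in {\bf (H$_1$)} (so that $T^{(d)}$ respects the $\otimes_\B$ structure and the splitting into $a_i(\alpha)\otimes b_i(\alpha)$) and the condition $T_i(\mr{L_q(\m A_i)})\subset\mr{L_q(\m A_i)}$ (so that applying $T_i$ to the head letter keeps it centered and the word-length structure intact). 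A secondary subtlety is why the $L_2$-hypothesis {\bf (H$_2$)} with $q=2$ is needed at all: it enters through Theorem~\ref{rep} (used inside the proof of Theorem~\ref{redform}) and through the $\|z\|_{L_p(\m A)\otimes_p\mr{\m W}^r}$ term where, after peeling, one is forced onto an $L_2$-type estimate for the intermediate swap/representation step --- keeping careful track of which norm ($L_2$ vs.\ $L_p$) is being invoked at each peeling is the real content of the argument. Once the intertwining is pinned down, the rest is Propositions~\ref{2cb}, \ref{pcb} and the projection bounds.
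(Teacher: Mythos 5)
Your skeleton is the right one and matches the paper's: reduce to $p>2$ by duality, apply the length reduction formula of Theorem~\ref{redform}, handle the column and row norms by Propositions~\ref{2cb} and \ref{pcb}, and induct on $d$. But there is a genuine confusion at the crux of the argument, namely the division of labour between ${\rm cb}_2$ and ${\rm cb}_p$ — which is the whole point of the constant ${\rm cb}_2+{\rm cb}_p$. In the paper's proof the \emph{column} term $\|z\|_{\m W_1^c\otimes_p L_p(\A)}$ is the one controlled by ${\rm cb}_2$: there $\oplus_i T_i$ hits the Hilbert-module factor $\m W_1$, and Proposition~\ref{2cb} bounds ${\rm Id}\otimes T\otimes{\rm Id}$ by the \emph{module} norm of $T$, which by Lemma~\ref{l2norm} equals its $L_2(\A_i)$-norm. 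Making this legitimate requires a preliminary observation you skip entirely: one must check that $\E(T_i(x)^*T_i(y))\in\B$ (via polarization and modularity), so that $T_i(\A_i)$ is again a Hilbert $\B$-module and $T_i$ is an adjointable-type module map of norm $\le{\rm cb}_2$. The \emph{row} term $\|z\|_{L_p(\A)\otimes_p\mathring{\m W}^r}$ is the one controlled by ${\rm cb}_p$ via Proposition~\ref{pcb}, because there $T_i$ acts inside the $L_p$-factor. You assert the opposite assignment ("on the column $L_p$-norm the relevant estimate is the $L_p$-cb-norm, giving a factor ${\rm cb}_p$"), and your diagnosis that the $L_2$ hypothesis enters "through Theorem~\ref{rep}" is wrong: Theorem~\ref{rep} involves only the $*$-representations $\pi_i$, which are automatically $L_2$-isometries, and makes no use of the $T_i$. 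You also drop the $\m W_1$-homogeneous piece $x_1$, which is not absorbed by the two tensor norms and needs the Khintchine-type formula of Remark~\ref{explicit} (the $\ell_p$-direct-sum term costs ${\rm cb}_p$, the two square functions cost ${\rm cb}_2$).

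The inductive step also has a gap. The paper peels off only the \emph{first} letter, so that $T^{(d)}(z)=\sum_{i,\a}a_i(\a)\otimes T^{(d-1)}(b_i(\a))$; then the column norm is handled by Proposition~\ref{pcb} together with the inductive $L_p$-cb bound on $T^{(d-1)}$, while the row norm is handled by Proposition~\ref{2cb} together with the $L_2$-bound on $T^{(d-1)}$ (which is free, by orthogonality of the $\m W_n$). Your variant via Remark~\ref{jpxb} peels $d-1$ letters at once and reduces to $T^{(1)}$ on the tail for the single norm $\|x\|_{\m W_{d-1}^c\otimes_p L_p(\A)}$, which is fine, but you leave the remaining $O(d)$ mixed column/middle/row norms essentially unexamined. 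For those, the $d$-th letter sits inside $\m W^r$ or the middle factor $\oplus_p\mathring{L_p(\m A_i)}$, and the required estimate is again a \emph{module-map} bound costing ${\rm cb}_2$, not an $L_p$-cb bound — precisely the distinction your write-up has not pinned down. Until the ${\rm cb}_2$/${\rm cb}_p$ bookkeeping is sorted out term by term, the proof is not complete.
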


\begin{proof}

We start the proof by a crucial observation related to  Lemma \ref{l2norm}. We view $\A_i$ as a right Hilbert $\B$-module with the inner product $\la x,\,y\ra=\E(x^*y)$.  We claim that $\E(T_i(x)^*T_i(y))$ belongs to $\B$ for any $x, y\in\A$.  By polarization, we can assume $x=y$. Then for $b\in L_2(\A_i)$, by the modularity of $T_i$, 
 $$\tau[b^*\E(T_i(x)^*T_i(x))b]=\tau[\E(T_i(xb)^*T_i(xb))]=\|T_i(xb)\|_2\le\|T_i\|\,\|xb\|_2\,.$$
This implies the claim, as well as $\E(T_i(x)^*T_i(x))\le \|T_i\|\la
x,\, x\ra$. Thus $T_i(\A_i)$ is a $\B$-bimodule.  A similar statement holds when $\A_i$ is viewed as a left
$\B$-module with the inner product $\la\la x,\,y\ra\ra=\E(xy^*)$.

We have stated the results in section~\ref{Norms on modules} for maps $T:E\to E$ but they clearly remain true if $T:E\to  E'$.  We can thus apply them to the restriction of $T_i$ to $\A_i$ with  range $T_i(\A_i)$. The same holds for direct sums.
  
An immediate consequence is the boundedness of $T^{(d)}$ on $\m W$ equipped with the $L_2$-norm, this follows from the orthogonality of the $\m W_n$'s and Proposition \ref{2cb} combined with the above observation. So the theorem holds for $p=2$. On the other hand, by duality, we need only to consider the case $2<p<\8$ that will be assumed in the remainder of the proof.

Before going into the core of the proof, we point out that the
  result of Theorem \ref{redform} can be applied to $T^{(d)}(x)$ when
  $x\in \m W$ using an obvious approximation argument. One just need
  to adapt correctly the modules, replacing $\m W_1^c$ by $ T^{(1)}(\m
  W_1)^c$ if $d=1$ and $\mathring{\m W}^r$ by ${
    T^{(d-1)}(\mathring{\m W})}^r$ if $d\geq1$.

We continue the proof by induction on $d$.  The main part is the initial step: $d=1$.  By the  usual argument of tensoring with the matrix algebras $\mathbb{M}_n$, it suffices to prove the boundedness of $T^{(1)}$. We will apply Theorem \ref{redform}. Let $x= x_0 + x_1 +z\in \m W$ as \eqref{dec}.

To deal with $\|T^{(1)}(x_1)\|_p$, we use the Khintchine inequality from Remark \ref{explicit}. We have that $T^{(1)}$ is bounded on $L_p(\oplus_i\A_i)$ with norm majorized by ${\rm cb}_p$, that is,
 $$\Big(\sum_i \|L_i(T^{(1)}(x_1))\|_p^p\Big)^{1/p}=\Big(\sum_i \|T_i(L_i(x_1))\|_p^p\Big)^{1/p}\le {\rm cb}_p\Big(\sum_i \|x_1\|_p^p\Big)^{1/p}\,.$$ 
On the other hand, thanks to the previous observation,  $\oplus_i T_i$ can be viewed as a  bounded modular map on $\m W_1$ with respect to  both inner products $\la \cdot,\,\cdot\ra$ and $\la\la \cdot,\,\cdot\ra\ra$ with norm bounded by ${\rm cb}_2$. Thus by Proposition \ref{2cb},
   $$\big\|\big(\bE[T^{(1)}(x_1)^*T^{(1)}(x_1)]\big)^{1/2}\big\|_p=\big\|\big(\bE[([\oplus_iT_i](x_1))^*[\oplus_i T_i](x_1)]\big)^{1/2}\big\|_p\le {\rm cb}_2\big\|\big(\bE(x_1^*x_1)\big)^{1/2}\big\|_p$$
 and similarly for the second inner product. Hence,
  $$\|T^{(1)}(x_1)\|_p\lesssim( {\rm cb}_2+{\rm cb}_p) \|x_1\|_p.$$
For the remaining part $z$, note that
 $$T^{(1)}(z)=\sum_{i,\alpha} [\oplus_j T_j](a_i(\alpha))\otimes b_i(\alpha).$$
Thus by the observation that $\oplus_i T_i: \m W_1\to T^{(1)}(\m W_1)$ is $\B$-bimodular and Proposition \ref{2cb}, we again have
 $$\Big\| \sum_{i,\alpha} T_i(a_i(\alpha))\otimes b_i(\alpha)\Big\|_{{T^{(1)}(\m W_1)^c}\otimes_p L_p(\m A)}\le  {\rm cb}_2
  \Big\| \sum_{i,\alpha} a_i(\alpha)\otimes b_i(\alpha)\Big\|_{{\m W_1^c}\otimes_p L_p(\m A)}.$$
For the other norm of $z$, we use Proposition \ref{pcb} to obtain 
 $$\Big\| \sum_{i,\alpha} T_i(a_i(\alpha))\otimes b_i(\alpha)\Big\|_{L_p(\m A)\otimes_p \mathring{\m W}^r}\le  {\rm cb}_p
  \Big\| \sum_{i,\alpha} a_i(\alpha)\otimes b_i(\alpha)\Big\|_{L_p(\m A)\otimes_p \mathring{\m W}^r}.$$
Thus $T^{(1)}$ extends to a (completely) bounded map on $L_p(\A)$.

\medskip

Now assume that $d\geq2$ and $T^{(d-1)}$ is completely bounded on $\m W$ for the $L_p$-norm. For $x=x_0+x_1+z$ as above, we have
 $$T^{(d)}(x_0+x_1)=x_0+x_1\;\text{ and }\;T^{(d)}(z)= \sum_{i,\alpha} a_i(\alpha)\otimes T^{(d-1)} (b_i(\alpha)).$$
Using the boundedness of $T^{(d-1)}$ on $L_2(\A)$ and Proposition \ref{2cb}, we have
 $$\Big\| T^{(d)} \Big(\sum_{i,\alpha} a_i(\alpha)\otimes b_i(\alpha)\Big)\Big\|_{L_p(\A)\otimes_p {T^{(d-1)}(\mathring{\m W})^r}}\le {\rm cb}_2
 \Big\|\sum_{i,\alpha} a_i(\alpha)\otimes b_i(\alpha)\Big\|_{L_p(\A)\otimes_p \mathring{\m W}^r}.$$
Similarly, the  complete boundedness of $T^{(d-1)}$ on $L_p(\A)$ and Proposition \ref{pcb} imply
 $$\Big\| T^{(d)} \Big(\sum_{i,\alpha} a_i(\alpha)\otimes b_i(\alpha)\Big)\Big\|_{\m W_1^c\otimes_p L_p(\m A)}\lesssim_{p, d}   {\rm cb}_p
 \Big\|\sum_{i,\alpha} a_i(\alpha)\otimes b_i(\alpha)\Big\|_{\m W_1^c\otimes_p L_p(\m A)}.$$
This concludes the induction, and the proof of the theorem too.
\end{proof}

 \medskip
 
 Theorem~\ref{main3} immediately follows from Theorem~\ref{letd}.
 
\begin{proof} [Proof of Theorem \ref{main3}] 

For $1\le k\le d$ let $T_k^{(k)}$ be the map $T^{(k)}$ in Theorem \ref{letd} associated to the family $(T_{k,i})_{i\in I}$. Then
 $$T^{Ld}=T_1^{(1)} T_2^{(2)}\cdots T_d^{(d)}.$$
This yields the assertion.\end{proof}

We extend Lemma~\ref{MR17} to the Hilbert transform of the $d$-th letters in the spirit of  Theorem \ref{letd} and Theorem~\ref{main3}.   Let $\e=(\e_i)_{i\in I}$ be a family of elements in the unit ball of $\m Z(\m B)$. Let $T_i$ be the left multiplication map  on  $L_q(\A_i)$ by $\e_i$. Clearly, the $T_i$'s satisfy the hypotheses {\bf (H$_1$)} and {\bf (H$_2$)}. Denote the corresponding $T^{(d)}$ by $\mathcal{H}_\e^{(d)}$. If $d=1$, this coincides with the free Hilbert transform in \eqref{FH1}.  More generally, given $d$ let $\e=(\e_{j,i})_{1\le j\le d,i\in I}$ be a family  in the unit ball of $\m Z(\m B)$. The corresponding map $T^{Ld}$ as in Theorem~\ref{main3} is denoted by $\mathcal{H}_\e^{Ld}$. 

\medskip

The following is a particular case of Theorem \ref{main3}, it extends \cite[Theorem 4.7]{MR17} to the amalgamated free product case.
 
 \begin{corollary}\label{letHd}
  Both $\mathcal{H}_\e^{(d)}$ and $\mathcal{H}_\e^{Ld}$ extend to completely bounded maps on  $L_p(\m A)$ for all $1<p<\infty$ with cb-norms controlled by constants depending only on $p$ and $d$.
   \end{corollary}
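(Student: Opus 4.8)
The plan is to deduce Corollary~\ref{letHd} directly from Theorem~\ref{main3} (equivalently Theorem~\ref{letd}), so the only real work is verifying that the building-block maps satisfy hypotheses {\bf (H$_1$)} and {\bf (H$_2$)} with uniform constants. First I would fix $1<p<\infty$ and, for each $i\in I$, consider the left-multiplication operator $T_i(x)=\e_i x$ on $L_q(\A_i)$, where $\e_i$ lies in the unit ball of the center $\m Z(\m B)$. Since $\e_i$ is central in $\B$, one has $T_i(axb)=\e_i a x b = a\e_i x b = aT_i(x)b$ for $a,b\in\B$, so $T_i$ is $\B$-bimodular. Moreover $\E_i(\e_i x)=\e_i\E_i(x)$ by the $\B$-bimodularity of the conditional expectation together with centrality, hence $T_i$ maps $\widering{L_q(\A_i)}$ into itself; this gives {\bf (H$_1$)} for $q=2$ and $q=p$. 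For {\bf (H$_2$)}, left multiplication by a contraction in a von Neumann algebra is completely contractive on every $L_q$ (it is the $L_q$-restriction of a left multiplier, which is completely bounded with cb-norm $\le\|\e_i\|_\infty\le1$), so $\mathrm{cb}_2\le1$ and $\mathrm{cb}_p\le1$, uniformly in $i$. The same checks apply verbatim to the family $(\e_{j,i})_{1\le j\le d,\,i\in I}$.

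With the hypotheses in hand, applying Theorem~\ref{letd} to the family $(T_i)_i$ just described yields that $T^{(d)}=\mathcal H_\e^{(d)}$ extends to a completely bounded map on $L_p(\A)$ with $\|\mathcal H_\e^{(d)}\|_{\mathrm{cb}(L_p(\A))}\lesssim_{p,d}\mathrm{cb}_2+\mathrm{cb}_p\le2$, i.e.\ a bound depending only on $p$ and $d$. For $\mathcal H_\e^{Ld}$, I would invoke Theorem~\ref{main3} (which is itself obtained by iterating Theorem~\ref{letd}, cf.\ the factorization $T^{Ld}=T_1^{(1)}T_2^{(2)}\cdots T_d^{(d)}$) applied to the family $(T_{j,i})$ of left multiplications by $\e_{j,i}$; each factor satisfies {\bf (H$_1$)}, {\bf (H$_2$)} uniformly as above, so
\[
\|\mathcal H_\e^{Ld}\|_{\mathrm{cb}(L_p(\A))}\lesssim_{p,d}\prod_{j=1}^d\sup_{i\in I}\Big(\|T_{j,i}\|_{\mathrm{cb}(L_p(\A_i))}+\|T_{j,i}\|_{\mathrm{cb}(L_2(\A_i))}\Big)\le 2^d,
\]
again a constant depending only on $p$ and $d$. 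Finally, for $d=1$ I would remark that unwinding the definition of $T^{(1)}$ for $T_i=\e_i\cdot$ gives precisely $\e_0\,\E(x)+\sum_{k\in I}\e_k L_k(x)$ with $\e_0$ absorbed into the convention, which matches $\mathcal H_\e$ in~\eqref{FH1}; thus the corollary genuinely extends \cite[Theorem~4.7]{MR17}.

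I do not expect a serious obstacle here, since Theorem~\ref{main3} has already done the heavy lifting; the only point requiring a small amount of care is the verification that central multipliers are $\B$-bimodular and commute with $\E_i$ — this is where centrality of $\e_i$ (as opposed to mere membership in $\B$) is used, and it is exactly the property that makes $T_i$ fit the framework. The uniformity of all constants in $i$ (and $j$) is automatic because $\|\e_{j,i}\|_\infty\le1$, so no extra argument is needed for the ``controlled by constants depending only on $p$ and $d$'' clause.
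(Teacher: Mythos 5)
Your proposal is correct and follows exactly the route the paper takes: the authors simply observe that left multiplication by a central contraction $\e_i\in\m Z(\m B)$ "clearly" satisfies {\bf (H$_1$)} and {\bf (H$_2$)} with constants $\le 1$ and then invoke Theorem~\ref{letd} and Theorem~\ref{main3}; you have merely written out the (correct) verifications they leave implicit. No gaps.
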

 
 We conclude this section with the boundedness of some paraproducts that generalize those introduced at the beginning of subsection~\ref{An intermediate result}.  These paraproducts are of independent interest in free analysis. 
 
 Let $\e=(\e_{j,i})_{j\ge1,i\in I}$ be an independent family of symmetric random variables with values $\pm1$. Let $\mathcal{H}_\e^{Ld}$ be the  map associated to $(\e_{j,i})_{1\le j\le d,i\in I}$ as in the previous corollary, and let $\mathcal{H}_\e^{Ld, {\rm op}}(x)=[\mathcal{H}_\e^{Ld}(x^*)]^*$. We use the convention that $\mathcal{H}_\e^{L0}={\rm Id}$.  For $j, k\ge0$ and $x, y\in\m W$, define
 $$
  x\overset {j,k}\top  y=\E_\e\,\E_{\e'} \,\mathcal{H}_\e^{Lj}\,\mathcal{H}_{\e'}^{Lk, {\rm op}}[\mathcal{H}_\e^{Lj}(x){\mathcal H}_{\e'}^{Lk, {\rm op}}(y)],
 $$
 where $\E_\e$ denotes the underlying expectation and $\e'$ is an independent copy of $\e$. This paraproduct is easily understood for elementary tensors $x$ and $y$: $x\overset {j,k}\top  y$ then collects all those terms in the development of $xy$ into elementary tensors whose first $j$ letters come from the same algebras of the  first $j$ letters of $x$, and whose last $k$ letters   from the same algebras of the  last $k$ letters of $y$. 
 
The previous corollary implies the following

\begin{corollary}\label{freepara}
 The paraproduct $\overset {j,k}\top$ extends to a bounded bilinear map from $L_{2p}(\A)\times L_{2p}(\A)$ to $L_{p}(\A)$ for all $1<p<\8$ with norm majorized by a constant depending only on $p, j$ and $k$.
   \end{corollary}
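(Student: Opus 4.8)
The plan is to mimic the classical argument: realize the bilinear paraproduct as a composition of a linear free Hilbert transform of the $d$-th letters with a bounded bilinear operation built from multiplication and projections. First I would observe that $\overset{j,k}{\top}$ can be written as $\mathbb E_\e\,\mathbb E_{\e'}\,\mathcal H_\e^{Lj}\,\mathcal H_{\e'}^{Lk,\mathrm{op}}\circ M$, where $M(x,y)=\mathbb E_\e\,\mathbb E_{\e'}[\mathcal H_\e^{Lj}(x)\,\mathcal H_{\e'}^{Lk,\mathrm{op}}(y)]$ is ordinary multiplication sandwiched between the (expectation-averaged) free Hilbert transforms of the first $j$ and last $k$ letters. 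By Corollary~\ref{letHd} each $\mathcal H_\e^{Lj}$ and $\mathcal H_{\e'}^{Lk,\mathrm{op}}$ is completely bounded on $L_{p}(\A)$ and on $L_{2p}(\A)$ with norm depending only on $p,j,k$; since the $\mathbb E_\e,\mathbb E_{\e'}$ are averages of such maps (and the random variables live in $\mathcal Z(\B)$, so all the hypotheses hold uniformly), the outer composition is a completely bounded operator on $L_p(\A)$ of the right norm. So it suffices to bound the bilinear map $(x,y)\mapsto \mathbb E_\e\,\mathbb E_{\e'}[\mathcal H_\e^{Lj}(x)\,\mathcal H_{\e'}^{Lk,\mathrm{op}}(y)]$ from $L_{2p}\times L_{2p}$ to $L_p$.

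For that bilinear estimate I would first reduce, by Corollary~\ref{letHd} again applied to the two inputs separately, to bounding plain multiplication $L_{2p}(\A)\times L_{2p}(\A)\to L_p(\A)$, which is just Hölder. The point of inserting the Hilbert transforms is only that $\overset{j,k}{\top}$ selects a specific piece of the product, namely the sub-sum of elementary tensors whose first $j$ letters come from the algebras of the first $j$ letters of $x$ and whose last $k$ come from the algebras of the last $k$ letters of $y$; averaging over the signs $\e,\e'$ is exactly the Fourier-analytic device that isolates this piece (each misaligned term acquires a nontrivial character in the $\e$ or $\e'$ variables and integrates to $0$, while aligned terms pick up a character of even degree, hence survive). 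Concretely: for $\B$-centered elementary tensors, $\mathcal H_\e^{Lj}$ multiplies by $\e_{1,i_1}\cdots\e_{j,i_j}$ and $\mathcal H_{\e'}^{Lk,\mathrm{op}}$ by $\e'_{1,i'_1}{}^*\cdots\e'_{k,i'_k}{}^*$ acting on the last letters; in a product $xy$ of two elementary tensors, a given resulting term carries a monomial in the $\e$'s whose $\mathbb E_\e$ is $1$ precisely when the first $j$ letters of the term sit in the algebras prescribed by $x$, and similarly for $\e'$. So one just spells out this combinatorial identity and concludes $\|x\overset{j,k}{\top}y\|_p=\|\mathcal H_\e^{Lj}\mathcal H_{\e'}^{Lk,\mathrm{op}}[\mathcal H_\e^{Lj}(x)\mathcal H_{\e'}^{Lk,\mathrm{op}}(y)]\|_p\lesssim_{p,j,k}\|\mathcal H_\e^{Lj}(x)\|_{2p}\|\mathcal H_{\e'}^{Lk,\mathrm{op}}(y)\|_{2p}\lesssim_{p,j,k}\|x\|_{2p}\|y\|_{2p}$, using Hölder in the middle and Corollary~\ref{letHd} at both ends.

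I would carry out the steps in this order: (1) state the combinatorial description of $x\overset{j,k}{\top}y$ on elementary tensors and verify the averaging identity $x\overset{j,k}{\top}y=\mathbb E_\e\mathbb E_{\e'}\mathcal H_\e^{Lj}\mathcal H_{\e'}^{Lk,\mathrm{op}}[\mathcal H_\e^{Lj}(x)\mathcal H_{\e'}^{Lk,\mathrm{op}}(y)]$ (this is basically the definition, so it is immediate); (2) apply Hölder's inequality for noncommutative $L_p$-spaces to the product of the two Hilbert-transformed factors; (3) invoke Corollary~\ref{letHd} three times — once to remove the outer $\mathcal H$'s on $L_p$ and twice to remove the inner ones on $L_{2p}$ — collecting the constants. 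The only subtlety, and the step I expect to need the most care, is making sure the use of Corollary~\ref{letHd} is legitimate when the signs $\e_{j,i}$ are $\pm1$-valued random variables rather than fixed scalars: one must either apply the corollary pointwise in $\e$ (with a uniform constant, since $\|\mathcal H_\e^{Ld}\|_{\mathrm{cb}}$ depends only on $p,d$ and not on the particular central unitaries, which here are the signs) and then average, or note that $\mathbb E_\e\mathcal H_\e^{Lj}$ is itself of the form covered by Theorem~\ref{main3}. Either way this is routine; no genuine obstacle remains once the averaging identity is written down, which is why the corollary follows so quickly from Corollary~\ref{letHd}.
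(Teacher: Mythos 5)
Your proposal is correct and is exactly the argument the paper intends: the paper offers no written proof beyond ``the previous corollary implies the following,'' and the intended reasoning is precisely your chain --- pull the expectations $\E_\e\,\E_{\e'}$ outside the norm by the triangle inequality, remove the outer $\mathcal H_\e^{Lj}\,\mathcal H_{\e'}^{Lk,\mathrm{op}}$ using Corollary~\ref{letHd} on $L_p(\A)$ (the constants are uniform in the realization of the signs, which are central unitaries), apply H\"older, and remove the inner transforms using Corollary~\ref{letHd} on $L_{2p}(\A)$. The only cosmetic slip is writing $\|x\overset{j,k}\top y\|_p$ as an equality with the un-averaged expression rather than bounding it by $\E_\e\,\E_{\e'}$ of the pointwise norms, but you address this yourself in your final paragraph; the combinatorial discussion of which terms survive the averaging is correct motivation but is not needed for the bound.
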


\begin{rk}
 The reader familiar with Haagerup noncommutative $L_p$-spaces can extend, with necessary modifications,  all results of this section to the type III case, that is, to amalgamated free products of von Neumann algebras equipped with faithful normal states instead of traces.
\end{rk}

\section{Multipliers on free products of groups}\label{Multipliers on free products of groups}

In this section we will first prove Theorems~\ref{main1} and  \ref{main2}, then consider Fourier multipliers on free products of general discrete groups.  Recall that $\wh\G$ denotes the group von Neumann algebra of a discrete group $\G$ generated by the left regular representation $\l$.

 \subsection{Proofs of Theorems~\ref{main1} and \ref{main2}}

We start by the results on the free group $\mathbb F_\infty$. 

\begin{proof}[Proof of Theorem~\ref{main2}]  We will apply Theorem \ref{main3} to the special case where $\A_i=\wh\ent=L_\infty(\T)$ for all $i\in \mathbb N$.  Fix a family $z=(z_{j,i})_{1\le j\le d,\, 1\le i<\8}$ of complex numbers with modulus 1. Define  $T_{j,i}$ to be the measure preserving $*$-representation on $\A_i$ given by $T_{j,i}(\l(n))=z_{j,i}^n\l(n)$ for any  $n\in\ent$, or equivalently in terms of the generator $\zeta\in L_\infty(\T)$, $T_{j,i}(\zeta^n)=z_{j,i}^n \zeta^n$. $T_{j,i}$ extends to a complete isometry on $L_p(\T)$ for $1\leq p<\infty$.  The corresponding map $T^{Ld}$ is exactly the map $\alpha_z^{Ld}$ in Theorem~\ref{main2}.  Thus Theorem~\ref{main3} implies that $\alpha_z^{Ld}$ is completely bounded on $L_p(\wh\F_\8)$ for $1<p<\8$, whence Theorem~\ref{main2}. \end{proof}

\begin{proof}[Proof of Theorem~\ref{main1}]
We will use $\alpha_z^{Ld}$ in the previous proof for the special case where $z_{j,i}=z_j$ for all $i$, and  write $\alpha_z=\alpha_z^{Ld}$ for  $z\in \T^d$.  Thus $\a$ is a uniformly completely bounded action of $\T^d$ on $L_p(\wh\F_\8)$. We then easily deduce Theorem~\ref{main1} by the standard transference argument as presented in \cite{CW76}. Let us give the details.

Let $m$ be a H\"ormander-Mikhlin multiplier on ${\mathbb Z}^d$, that
is, $m$ satisfies \eqref{HM}. Then the associated Fourier multiplier
$T_m$ on $L_p(\T^d)$ is completely bounded with cb-norm majorized by
$C_{p,d}\|m\|_{{\rm HM}}$. This follows from \cite{Bou86} for $d=1$
and \cite{Mc94,Zi89} for $d\ge2$ since the Schatten $p$-class $S_p$ is
a UMD space. Note that valid for general UMD spaces, the results in
these papers require more regularity on $m$ than the condition
\eqref{HM}, that is, the partial discrete derivations should run to
all orders up to $d$ instead of $[\frac d2]+1$ in \eqref{HM}. However, using the arguments of
\cite[section~4.1]{XXY18}, we can show that when the UMD space in
consideration is a noncommutative $L_p(\M)$, we can go down again to
the classical order $[\frac d2]+1$. Hence
 $$\big\|T_m\ot {\rm Id}_{S_p}: L_p(\T^d; S_p)\to L_p(\T^d;
S_p)\big\|_{\rm cb}\les_{p, d}\|m\|_{{\rm HM}}\,.$$ The Schatten
$p$-class $S_p$ here can be replaced by $L_p(\M)$ for any QWEP $\M$,
in particular, by $L_p(\wh {\F}_\8)$. Thus
 $$\big\|T_m\ot {\rm Id}_{L_p(\wh {\F}_\8)}: L_p(\T^d; L_p(\wh {\F}_\8))\to L_p(\T^d; L_p(\wh {\F}_\8))\big\|_{\rm cb}\les_{p, d}\|m\|_{{\rm HM}}\,.$$
Now given $x\in L_p(\wh {\F}_\8)$ define $f\in L_p(\T^d; L_p(\wh {\F}_\8))$ by $f(z)= \alpha_z(x)$ for $z\in\T^d$. Then by Theorem~\ref{main2}
  $$\|f(z)\|_{L_p(\wh {\F}_\8)}\simeq_{p, d} \|x\|_{L_p(\wh {\F}_\8)},\quad z\in\T^d.$$
Clearly, we have the intertwining identity:
 $$[T_m\ot {\rm Id}_{L_p(\wh {\F}_\8)}](f)(z)=\alpha_z(M_m(x)),\quad z\in\T^d.$$
Thus we deduce
  \begin{align*}
  \big\|M_m(x)\big\|^p_{L_p(\wh {\F}_\8)}
  &\les_{p, d}\int_{\T^d}\big\|\alpha_z(M_m(x))\big\|^p_{L_p(\wh {\F}_\8)}dz\\
 &=\big\|[T_m\ot {\rm Id}_{L_p(\wh {\F}_\8)}](f)\big\|^p_{L_p(\T^d; L_p(\wh {\F}_\8))}\\
 &\les_{p, d}\|m\|^p_{{\rm HM}}\,\big\|f\big\|^p_{L_p(\T^d; L_p(\wh {\F}_\8))}\\
 &\les_{p, d}\|m\|^p_{{\rm HM}}\,\big\|x\big\|_{L_p(\wh {\F}_\8)}\,.
 \end{align*}
Therefore, $M_m$ is bounded on $L_p(\wh {\F}_\8)$ with norm controlled by $C_{p,d}\|m\|_{{\rm HM}}$. The complete boundedness follows from the usual argument of tensoring with $S_p$.
 \end{proof}

We end this subsection with some examples of Fourier multipliers on the free group. The free Hilbert transform of \cite{MR17} is a typical example of Fourier multipliers studied in this article. Theorem~\ref{main1} allows us to exhibit plenty of examples of Fourier multipliers on the free group. We give here just some typical ones.

\begin{ex}
Let $\A_i=L_\infty(\T)$ for all $i\in \mathbb N$ and $z=(z_{j,i})_{1\le j\le d,i\in I}$ be a family  of complex numbers of modulus 1. Then the corresponding transforms ${\mathcal H}_z^{(d)}$ and ${\mathcal H}_z^{Ld}$  in Corollary~\ref{letHd}  are completely bounded on $L_p(\wh\F_\8)$ for all $1<p<\8$. 
\end{ex}

\begin{ex}
We give two more examples of similar nature: For $1\le j\le d$ let
$T_{j,i}$ be the map on $\A_i=L_\infty(\T)$ defined by
$T_{j,i}(\zeta^n)=z_{j,i}^{{\rm sgn}(n)}\zeta^n$ for $\zeta\in\T$ and
$n\in\ent$. Clearly, $T_{j,i}$ is completely bounded on $L_p(\T)$
 for all $1< p<\8$. Let
$\wt{\mathcal H}_z^{Ld}$ be the corresponding map $T^{Ld}$ in
Theorem~\ref{main3} and $\wt{\mathcal H}_z^{(d)}$ the map $T^{(d)}$ in
Theorem~\ref{letd} associated to $(T_{d,i})_{i\in\nat}$. Again,
$\wt{\mathcal H}_z^{Ld}$ and $\wt{\mathcal H}_z^{(d)}$ are completely
bounded on $L_p(\wh\F_\8)$ for all $1<p<\8$.
\end{ex}

\begin{ex}
The Riesz transforms $R_j$, $1\le j\le d$, on $L_p(\T^d)$ are the Fourier multipliers of symbols
 $$m_j(k)=\frac{k_j}{\|k\|}\;\text{ for }k=(k_1,\cdots, k_d)\in\ent^d, \;\; \|k\|=\sqrt{k_1^2+\cdots+k_d^2}.$$
It is classical that $R_j$  is completely bounded on  $L_p(\T^d)$ for $1<p<\8$. The corresponding multipliers $M_{m_j}$ in Theorem \ref{main1} are denoted by $R_j^{Ld}$ and may be called  the free Riesz transforms of the first $d$ letters on $\F_\8$.  $R_j^{L1}$ is just the free Hilbert transform of \cite{MR17}.  $R_j^{Ld}$, $1\le j\le d$, are completely bounded on $L_p(\wh\F_\8)$ for $1<p<\8$.
\end{ex}

\begin{ex}
Our final example is given by the classical Littlewood-Paley multiplier. It is well known that the following Littlewood-Paley multiplier
 $$m=\sum_{i=0}^\8\e_i\mathds {1}_{\{k\in\ent^d: 2^i-1\le |k|<2^{i+1}\}}\;\text{ with }\; \e_i=\pm1, \;\; |k|=|k_1|+\cdots+|k_d|$$
is a completely bounded Fourier $L_p$-multiplier on $\ent^d$ for all $1<p<\8$.  The corresponding multiplier $M_m$, denoted by $LP_c^{Ld}$, is completely bounded on $L_p(\wh\F_\8)$. One can equally consider the more commonly used Littlewood-Paley multiplier:
 $$m=\sum_{(i_1,\cdots, i_d)\in\ent_+^d}\e_i\mathds {1}_{R_{(i_1,\cdots, i_d)}}\,,$$
where $R_{(i_1,\cdots, i_d)}=I_{i_1}\times\cdots\times I_{i_d}$ and $I_{j}=\{k\in\ent: 2^j-1\le |k|<2^{j+1}\}$. It gives rise to a completely bounded multiplier on $L_p(\wh\F_\8)$ too.
 \end{ex}

 \subsection{More paraproducts}\label{More paraproducts}
 
We make the paraproducts in Corollary \ref{freepara} more precise in the case of free groups. Let $z=(z_{j,i})_{j\in\nat,\,i\in\nat}\in\T^\nat\times \T^\nat$.  Let $\a_z^{Ld}$ be the  map associated to $(z_{j,i})_{1\le j\le d,i\in\nat}$ in Theorem~\ref{main2}. Note that
 $$\a_z^{Ld}=T_z^{(1)}\cdots T_z^{(d)},$$
 where $T_z^{(j)}(\l(g))=z_{j, i_j}^{k_j}\l(g)$ (with $k_j=0$ for $j>n$) for $g=g_{i_1}^{k_1} \cdots g_{i_n}^{k_n}\in\F_\8$ in reduced form. Put 
 $$\a_z^{Ld, {\rm op}}(x)=[\a_{\overline{z}}^{Ld}(x^*)]^*\;\text{ and }\; T_z^{(j), {\rm op}}(x)=[T_{\overline{z}}^{(j)}(x^*)]^*.$$
Let $\e=(\e_{j,i})_{j\in\nat,i\in\nat}$ be an independent family of symmetric signs. Recall that we use $\mathcal{H}_\e^{(j)}$ and $\mathcal{H}_\e^{(j), {\rm op}}$  to denote the free Hilbert transforms on the (last) $j$-th letters considered in Corollary \ref{letHd},
 $$
H_{\e}^{(j)}(\l_g)= \e _{j,i_j} \l_g\;\text{ and }\; H_{\e}^{(j),{\rm op}}(\l_g)=\e _{j,i_{m+1-j}} \l_g.
 $$
 We use again the convention that  $T_z^{0}=H_\e^0=\a_z^{L0}={\rm Id}$.  For $j, k\ge0$ and $x, y\in\m \com[\F_\8]$, define
 \begin{align*}
 x\,\overset {j,k}\top\,  y&=\E_{z}\,\E_{z'} \,\a_z^{Lj}\,\a_{z'}^{Lk, {\rm op}}\big[\a_z^{Lj}(x)\,\a_{z'}^{Lk, {\rm op}}(y)\big],\\
 x\,\overset {j+,k}\top\,  y&= \E_\e \,H_{\e}^{(j+1)}\big[ H_{\e}^{(j+1)}(x)\overset{j,k}\top  y\big],\\
 x\,\overset{j,k+}\top\,  y&=\E_{\e}\, H_{\e}^{(k+1),{\rm op}}\big[x\overset{j,k}\top  H_{\e}^{(k+1),{\rm op}}(y)\big],
 \end{align*}
where $\E_{z}$ and $E_\e$ denote the expectations on $z$ and $\e$, respectively.

As in the setting of free products of von Neumann algebras, we can easily interpret these paraproducts for $x=\l(g)$ and $y=\l(h)$. To this end, we say that the first $j$ blocks  of $g$ survive in $gh$ if  the first $j$ blocks of $gh$ and $g$ are  exactly the same, and that the $j$-th block of $g$ marks in $gh$ if  the $j$-th blocks of $gh$ and $g$ are powers of a same generator. Replacing ``first'' by ``last'' (i.e., counting the letters of a reduced word in the reverse order), we get similar notions.   
   
\smallskip

Thus for $g, h\in\F_\8$
  \begin{itemize}
  \item  $\l(g)\overset{j,k}\top \l(h)=\l(gh)$ if  the first $j$ blocks of $g$ and the last $k$ blocks of $h$ survive in $gh$, $\l(g)\overset{j,k}\top \l(h)=0$ otherwise; 
  \item $\l(g)\overset{j+,k}\top \l(h)=\l(gh)$ if the first $j$ blocks of $g$  and  the last $k$ blocks of   $h$ survive in $gh$, and in addition,  the $(j+1)$-th block of  $g$  marks in $gh$, 
  $\l(g)\overset{j+,k}\top \l(h)=0$ otherwise.
  \end{itemize}
  
\smallskip

Our first approach to Theorem \ref{main2} heavily relies on the
boundedness of the above paraproducts and several variants of them.
Now this boundedness immediately
follows from Theorem \ref{main2}.

\begin{prop}\label{freegrouppara}
 All the above paraproducts extend to bounded bilinear maps from $L_{2p}(\wh\F_\8)\times L_{2p}(\wh\F_\8)$ to $L_{p}(\wh\F_\8)$ for all $1<p<\8$ with norm majorized by constants depending only on $p, j$ and $k$.
   \end{prop}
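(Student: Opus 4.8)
The plan is to deduce Proposition~\ref{freegrouppara} from Theorem~\ref{main2} by exactly the same mechanism that produced Corollary~\ref{freepara} from Corollary~\ref{letHd}, only now working inside the group von Neumann algebra $\wh\F_\8$ rather than in a general amalgamated free product. First I would recall that $\wh\F_\8=*_{i\in\nat,\com}\wh\ent$ is the amalgamated free product (over $\com$) of copies of $\wh\ent=L_\8(\T)$, so that the reduced-word decomposition \eqref{reduced form} of $g\in\F_\8$ matches the block decomposition $a_1\tens\cdots\tens a_n$ of $\m W$, the block $g_{i_l}^{k_l}$ corresponding to the monomial $\zeta^{k_l}$ in the $i_l$-th copy of $L_\8(\T)$. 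Under this identification the maps $T_z^{(j)}$, $\a_z^{Lj}$ and $\mathcal H_\e^{(j)}$ are precisely the maps $T^{(d)}$ and $T^{Ld}$ of Section~\ref{Multipliers on amalgamated free products} built from the $*$-representations $T_{j,i}(\zeta^n)=z_{j,i}^n\zeta^n$ (as in the proof of Theorem~\ref{main2}) and from the signs $\e_{j,i}$. Consequently each of $\a_z^{Lj},\a_z^{Lj,{\rm op}},\mathcal H_\e^{(j)},\mathcal H_\e^{(j),{\rm op}}$ is completely bounded on $L_q(\wh\F_\8)$ for all $1<q<\8$, uniformly in $z\in\T^\nat\times\T^\nat$ (resp.\ in the signs), with constants depending only on the relevant index $j$ — by Theorem~\ref{main2} for the $\a$'s and by Corollary~\ref{letHd} for the $\mathcal H$'s.

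The next step is to run the averaging argument. Writing out the definition of $x\overset{j,k}\top y$, it is a composition of multiplication $L_{2p}\times L_{2p}\to L_p$ (Hölder) with the completely bounded operators $\a_z^{Lj}$ and $\a_{z'}^{Lk,{\rm op}}$ applied before and after the product, followed by integration over $z,z'\in\T^\nat\times\T^\nat$. Since $\|\a_z^{Lj}\|_{{\rm cb}(L_q(\wh\F_\8))}\le C_{q,j}$ uniformly in $z$ (and similarly for the operator versions), the triangle/Minkowski inequality in $L_p$ applied under the integral sign gives
\[
\big\|x\overset{j,k}\top y\big\|_p
\le \sup_{z,z'}\Big\|\a_z^{Lj}\a_{z'}^{Lk,{\rm op}}\big[\a_z^{Lj}(x)\a_{z'}^{Lk,{\rm op}}(y)\big]\Big\|_p
\le C_{p,j}C_{p,k}\,\|x\|_{2p}\|y\|_{2p},
\]
using $\|\a_z^{Lj}(x)\|_{2p}\le C_{2p,j}\|x\|_{2p}$ and the analogous bound for $y$, together with $\|uv\|_p\le\|u\|_{2p}\|v\|_{2p}$. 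For $\overset{j+,k}\top$ and $\overset{j,k+}\top$ one simply precomposes/postcomposes with the additional completely bounded Hilbert transform $\mathcal H_\e^{(j+1)}$ (resp.\ $\mathcal H_\e^{(k+1),{\rm op}}$) and averages once more over the signs $\e$; each extra factor costs only a constant depending on $j+1$ (resp.\ $k+1$). This yields the stated bounds with constants depending only on $p,j,k$, and the passage from $\com[\F_\8]$ to all of $L_{2p}$ is by density.

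I do not expect a serious obstacle here: the real content has already been absorbed into Theorem~\ref{main2} (uniform complete boundedness of the $\a_z^{Lj}$) and Corollary~\ref{letHd}. The only points requiring a little care are (i) the uniformity in $z$, which is built into the conclusion of Theorem~\ref{main2} since the constants there depend only on $d$ and $p$, not on the chosen moduli-one numbers; (ii) making sure the measurability in $(z,z')$ is adequate to justify pulling the $L_p$-norm inside the expectation, which is routine because everything is polynomial on the dense subspace $\com[\F_\8]$; and (iii) the interpretation of the paraproducts on $\lambda(g),\lambda(h)$ in terms of surviving/marking blocks, which is a direct transcription of the corresponding discussion for $\overset{j,k}\top$ in Section~\ref{Multipliers on amalgamated free products} and needs only the dictionary between reduced words and elementary tensors described above; it is used to identify the paraproducts but is not logically needed for the norm estimate.
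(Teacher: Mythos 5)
Your proposal is correct and is essentially the paper's own argument: the paper simply states that the boundedness "immediately follows from Theorem~\ref{main2}" (exactly as Corollary~\ref{freepara} follows from Corollary~\ref{letHd}), and your write-up just spells out the routine averaging/H\"older/Minkowski steps together with the uniform-in-$z$ (resp.\ in-$\e$) complete boundedness of $\a_z^{Lj}$ and $\mathcal H_\e^{(j)}$ on $L_p$ and $L_{2p}$. No gap.
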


 \subsection{Extension to free products of groups}

In the proof of Theorem \ref{main1} one can easily replace $\mathbb Z$
by any abelian discrete group $\Gamma$. However, to go beyond the
abelian case, one needs extra efforts. In  this
subsection, $\G$ will denote a general discrete group. Let
$\G_\infty=\G^{*\nat}$ be the infinite free power of $\G$. Each $g\in
\Gamma_\infty\setminus\{e\}$ is written as a reduced word:
 $$
  g=g_{1}g_{ 2}\cdots g_{n} 
 $$
 with $g_{j}\neq e$ belonging to the $i_j$-th copy of $\G$ in $\G_\8$ and $i_{1}\neq i_{2}\neq\cdots\neq i_n$.  
 
 We begin by extending Theorem~\ref{main2} to this general setting. Define a linear map $\a^{Ld} :\C[\Gamma_\infty] \to\C[\Gamma^d\times \Gamma_\infty]$ as follows: for $g=g_1\cdots g_{n}\in
 \Gamma_\infty$ as above  in reduced form,
\begin{eqnarray*}
 \a^{Ld} (\lambda(g))= \l_{\G^d}(g_1,\cdots, g_{d})\otimes \lambda(g)
 \end{eqnarray*}
with $g_\ell=e$ in $\l_{\G^d}(g_1,\cdots, g_{d})$ if $\ell>n$.  Here we have denoted by $ \l_{\G^d}$ the left regular representation of $\G^d$ to avoid ambiguity ($\l$ being that of $\G_\8$).
 
 \begin{thm}\label{freegammaaction} 
 Let $d\in {\mathbb N}$ and $1<p<\infty$.   Then the map $\a^{Ld}$ extends to a completely isomorphic embedding of $L_p(\widehat\Gamma_\infty)$ into $L_p(\wh\G^d\,\overline\otimes\,\widehat\Gamma_\infty)$.
\end{thm}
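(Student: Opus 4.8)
The plan is to deduce Theorem~\ref{freegammaaction} from Theorem~\ref{main3} in essentially the same way Theorem~\ref{main2} is deduced, but with the building blocks $\A_i=\wh\ent$ replaced by $\A_i=\wh\G$. First I would set up the amalgamated free product framework: take $\A_i=\wh\G$ for every $i\in\nat$, with $\B=\com$ and $\tau_i$ the canonical trace; then $\A=*_{i\in\nat,\com}\wh\G=\widehat{\G^{*\nat}}=\widehat{\G_\8}$, and $\mathring{\A}_i$ is the closed span of $\{\l(g):g\in\G\setminus\{e\}\}$ in the $i$-th copy. So an element of $\m W_{\un i}$ is exactly a linear combination of $\l(g_1g_2\cdots g_n)$ with $g_j$ in the $i_j$-th copy of $\G$, $g_j\neq e$, $i_1\neq\cdots\neq i_n$. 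The reduced-word picture from the statement is then literally the tensor picture from section~\ref{Preliminaries}.

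Next I would choose the maps $T_{j,i}$. The natural candidate is to take, for each $1\le j\le d$ and each $i\in\nat$, the map $T_{j,i}:\wh\G\to\wh{\G^d}\,\overline\otimes\,\wh\G$ sending $\l(g)\mapsto (1\otimes\cdots\otimes\l_{\G}(g)\otimes\cdots\otimes1)\otimes\l(g)$, where $\l_\G(g)$ sits in the $j$-th tensor leg of $\wh{\G^d}$. This is a $*$-representation (hence completely positive, completely isometric on each $L_q$), it is $\com$-bimodular trivially, and it maps $\mathring{\wh\G}$ into the corresponding centered subspace since it fixes $\com$ and commutes with the canonical conditional expectations (the trace of $\l(g)$ is $0$ for $g\neq e$ on both sides). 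Thus hypotheses {\bf (H$_1$)} and {\bf (H$_2$)} hold with $\mathrm{cb}_2=\mathrm{cb}_p=1$ uniformly in $i,j$. One subtlety: Theorem~\ref{main3} as stated concerns maps $T_{j,i}:L_p(\A_i)\to L_p(\A_i)$, whereas here the range is a larger algebra; but the remark ``we have stated the results in section~\ref{Norms on modules} for maps $T:E\to E$ but they clearly remain true if $T:E\to E'$'' inside the proof of Theorem~\ref{letd} covers exactly this, so I would invoke that, or equivalently embed everything into a fixed ambient algebra. Applying the construction of $T^{Ld}$ to these $T_{j,i}$ and chasing the definition shows $T^{Ld}=\a^{Ld}$ on $\com[\G_\8]$: on a reduced word of length $n\ge d$ one gets the tensor of the first-$d$-letter pieces tagged with the corresponding legs of $\wh{\G^d}$, and for $n<d$ the extra legs carry $\l_\G(e)=1$, matching the convention in the statement. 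Theorem~\ref{main3} then gives that $\a^{Ld}$ extends to a completely bounded map $L_p(\wh{\G_\8})\to L_p(\wh{\G^d}\,\overline\otimes\,\wh{\G_\8})$ with norm $\lesssim_{p,d}1$.

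It remains to see that $\a^{Ld}$ is a complete \emph{isomorphic embedding}, i.e.\ bounded below. For this I would exhibit a completely bounded left inverse. The obvious one is $\mathrm{Id}_{L_p(\wh\G^d)}$ tensored with nothing useful — instead, compose $\a^{Ld}$ with the map that first applies the trace-preserving conditional expectation / a suitable slice and then collapses the $\wh{\G^d}$ part. Concretely: on $\wh{\G^d}\,\overline\otimes\,\wh{\G_\8}$ apply the normal conditional expectation onto $1\otimes\wh{\G_\8}$ induced by the trace of $\wh{\G^d}$; call it $\m E_{\G^d}$. Since the trace of $\l_{\G^d}(g_1,\dots,g_d)$ is $1$ when all $g_\ell=e$ and $0$ otherwise, one computes $\m E_{\G^d}\circ\a^{Ld}(\l(g))=\l(g)$ when $n<d$ but $=0$ when $n\ge d$ — so this does \emph{not} invert $\a^{Ld}$. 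The right fix is to use the left regular representation of $\G^d$ against $\a^{Ld}$ differently, or better: note that $\a^{Ld}$ is (a corner of) a genuine group homomorphism-type map and show directly that $\|\a^{Ld}(x)\|_p\gtrsim_{p,d}\|x\|_p$ by iterating the one-letter case. Indeed the cleanest route is: the map $x\mapsto\a^{Ld}(x)$ is, up to the identification $\wh{\G^d}\,\overline\otimes\,\wh{\G_\8}$, the restriction to $\wh{\G_\8}$ of the $*$-homomorphism $\wh{\G^{*\nat}}\to\wh{(\G^{\nat})^{*\nat}}$ induced by the diagonal-type embedding $\G\hookrightarrow\G^\nat$ in each copy, post-composed with a reshuffling that is a trace-preserving $*$-isomorphism onto its image; a trace-preserving normal $*$-isomorphism onto a subalgebra is a complete isometry on all $L_p$. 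So the real content of the ``embedding'' claim is that $\a^{Ld}$, after the harmless reshuffling, is multiplicative and trace-preserving, which is a direct check on reduced words. I expect the main obstacle to be precisely this last point: pinning down the correct target von Neumann algebra and the reshuffling isomorphism so that $\a^{Ld}$ becomes literally (the compression of) a trace-preserving $*$-monomorphism, thereby getting the lower bound for free, and then reconciling it with the upper bound coming from Theorem~\ref{main3}. Once both bounds are in hand, tensoring with $S_p$ as usual upgrades boundedness to complete boundedness, giving the complete isomorphic embedding.
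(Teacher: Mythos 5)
There is a genuine gap at the very first step. You take $\A_i=\wh\G$ with $\B=\com$ and then choose maps $T_{j,i}$ whose range is the \emph{larger} algebra $\wh{\G^d}\,\overline\otimes\,\wh\G$. The remark ``$T:E\to E'$'' in the proof of Theorem~\ref{letd} lets one change the target of a single modular map, but it does not make the construction of $T^{Ld}$ meaningful here: the formula $T_{1,i_1}(a_1)\otimes\cdots\otimes T_{d,i_d}(a_d)\otimes a_{d+1}\otimes\cdots$ concatenates the outputs inside an amalgamated free product, and for the result to equal $\l_{\G^d}(g_1,\cdots,g_d)\otimes\l(g)$ the $\wh{\G^d}$-legs produced by different letters must be \emph{multiplied together} while the $\wh\G$-legs stay free. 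That forces you to work with $\wt\A_i=\wh{\G^d}\,\overline\otimes\,\wh\G$ amalgamated over $\B=\wh{\G^d}\otimes\com$ --- and then hypothesis {\bf (H$_1$)} fails for nonabelian $\G$: the map $b\otimes\l(g)\mapsto b\,u_j(g)\otimes\l(g)$ (with $u_j(g)$ the $j$-th-leg copy of $\l_\G(g)$ in $\wh{\G^d}$) is not right $\B$-modular, since $u_j(g)$ need not commute with elements of $\wh{\G^d}$. This is exactly the obstruction the paper flags (``to go beyond the abelian case, one needs extra efforts''), and it is why the actual proof does not feed $\a^{Ld}$ directly into Theorem~\ref{main3}. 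Instead it sets $G=\G^{*(d+1)}$ and $G_\8=G^{*\nat}$, applies Theorem~\ref{main3} over $\B=\com$ to the cyclic-permutation $*$-representations $\pi^k$ of $\wh G$ (these are honest trace-preserving $*$-representations fixing $\com$, so the hypotheses hold, and the map built from $\pi^{-k}$ supplies a completely bounded inverse for free), and only afterwards produces the tensor leg $\l_{\G^d}(g_1,\cdots,g_d)$ by the Fell absorption principle applied to the group morphism $\phi:G_\8\to\G^d$ that reads off the tagged letters.

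Your treatment of the lower bound is also not viable as stated. You correctly observe that slicing by the trace of $\wh{\G^d}$ fails, but the proposed fix --- that $\a^{Ld}$ is (a compression of) a trace-preserving $*$-monomorphism --- is false: when letters cancel in a product $gh$, the first $d$ blocks of $gh$ are not obtained from those of $g$ and $h$, so $\a^{Ld}(\l(g)\l(h))\neq\a^{Ld}(\l(g))\,\a^{Ld}(\l(h))$ in general. In the paper the two-sided estimate comes from the factorization $\a^{Ld}=\Pi_\phi\circ T^{Ld}$, with $\Pi_\phi$ a complete isometry (Fell absorption) and $T^{Ld}$ a complete isomorphism with an explicit completely bounded inverse. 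For abelian $\G$ your plan does work and is essentially the proof of Theorem~\ref{main2}; the nonabelian case is precisely where it breaks.
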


 \begin{proof}
  We use an argument similar to that of Theorem \ref{redform}. Let $G=*_{l=1,\cdots,d+1}\Gamma$ and $G_\infty=*_{i\geq1} G$. To avoid confusion we use $\Gamma_{i,l}$ to denote the
 $l$-th copy of $\Gamma$ in the $i$-th copy of $G$ in $G_\infty$.

First, let $\pi: \wh G\to \wh G$ be the $*$-representation given by the cyclic permutation of the copies  that  sends the $l$-th copy of $\Gamma$ to the $l+1\hskip-.15cm\pmod {d+1}$-th copy. Consider the map $T^{Ld}$ on $L_p(\widehat G_\infty)$ given by Theorem \ref{main3} associated to $T_{k}= \pi^{k},1\leq k\leq d$, as well as its inverse associated to $T^{-1}_{k}= \pi^{-k},1\leq k\leq d$.  By Theorem~\ref{main3}, $T^{Ld}$ is a complete isomorphism on $L_p(\widehat G_\infty)$. We will need the restriction of $T^{Ld}$ to a copy of $L_p(\widehat\Gamma_\infty)$ in  $L_p(\widehat G_\infty)$ and will make its presentation more precise. 

Let us identify $\Gamma_\infty$ with $*_{i\geq1}\Gamma_{i,1}$. For an element in $g\in \Gamma_{i,1}$, we denote its copy in $\Gamma_{i,l}$ by $g^{(l)}$. Thus, for  $g=g_1\cdots g_n\in \Gamma_\infty $ in reduced form, we have
 \begin{eqnarray}\label{TLdd}
   T^{Ld}(\lambda(g))=\lambda (g_1^{(2)}\cdots g_d^{(d+1)}g_{d+1}^{(1)}\cdots g_n^{(1)}).
 \end{eqnarray}  As we have explained, (\ref{TLdd}) defines a complete isomorphic embedding of $L_p(\widehat\Gamma_\infty)$ into $L_p(\widehat G_\infty)$.

Next, we consider the group morphism $\phi$ from $G_\infty$ onto $\Gamma^d$  such that for all $i$ and $g\in \Gamma_{i,k}\subset G_\infty$
 $$\phi(g)=e\;\text{ if }k=1\;\text{ and }\; \phi(g)=(\underbrace{e,\cdots,e}_{k-2},g,e,\cdots, e) \;\text{ if }2\le k\le d+1.$$
Let  $U=\l_{\Gamma^d}\circ \phi$. Then $U$ is a unitary representation of $G_\infty$ on $\ell_2(\Gamma^d)$.  Applying the Fell absorption principle to $U$, we get a completely isometric embedding of $L_p(\widehat G_\infty)$ into $L_p(\wh{\G^d}\,\overline{\tens}\,\widehat G_\infty)$:
 \begin{eqnarray}\label{fell}
  \Pi_\phi: \sum_{g\in G_\infty} c(g)\l(g)\mapsto \sum_{g\in G_\infty}  c(g)\l_{\Gamma^d}(\phi(g))\otimes \l(g).
  \end{eqnarray}
By (\ref{TLdd}) and (\ref{fell}),  we see that
 $$\a^{Ld}(x)=\Pi_\phi\cdot T^{Ld}(x),\quad x\in \C[\Gamma_\infty].$$ 
Thus  $\a^{Ld}$ extends to a completely isomorphic embedding on $L_p(\widehat\Gamma_\infty)$.
 \end{proof}

\begin{rk}
    There is an  alternate proof to Theorem~\ref{freegammaaction}. First, one may apply  Theorem \ref{redform} and Remark \ref{explicit} for 
$\A_i=\wh{\Gamma^d}\,\overline\tens\, \widehat{\Gamma}$ for all $i\in {\mathbb N}$, and $\B=\widehat{\Gamma^d}\,\tens \C$,  and check that 
    $$\|\a^{L1}(x)\|_{L_p(\widehat  {\Gamma^d}\,\overline\otimes \,\widehat \Gamma_\infty)}\simeq_{p}\|x\|_{L_p(\widehat \Gamma_\infty)}$$
for any $x\in\com[\Gamma_\infty]$. Next, one may apply Theorem \ref{redform} repeatedly to get 
  \begin{eqnarray*}\label{last}
  \|\a^{Ld }(x)\|_{L_p(\widehat  {\Gamma^d}\,\overline\otimes \,\widehat \Gamma_\infty)}
  \simeq_{p}\|\a^{L(d-1)}(x)\|_{L_p(\widehat  {\Gamma^d}\,\overline\otimes \,\widehat \Gamma_\infty)}
  \simeq_{p}\cdots \simeq_{p}\|x\|_{L_p(\widehat\Gamma_\infty)}.
 \end{eqnarray*}
 This alternate proof gives a better constant.
\end{rk}

As in the free group case, the previous theorem, together with transference, easily implies a multiplier result on $\G_\8$. Given a bounded function $m$ on ${\Gamma}^d$, define  a linear map $M_m$ on
 $L_2(\hat{\Gamma}_\infty)$ by
  $$M_m(\l(g))=m(g_1,g_2,\cdots,g_d)\l (g)$$
with $g_\ell=e$ in $m(g_1,g_2,\cdots,g_d)$ if $\ell>n$ for every $g\in\G_\8$ in reduced form.   Note that $\wh\G$ is QWEP iff $\G$ is hyperlinear (cf. \cite{Ra08}).

\begin{thm}\label{freegamma} 
  Assume that $\wh\G$ is QWEP. Let $d\in {\mathbb N}$ and $1<p<\infty$.   If the Fourier multiplier $T_m$ is completely bounded on $L_p(\wh{\G^d})$, then  $M_m$ extends to a completely bounded map on $L_p(\wh\G_\infty)$.
\end{thm}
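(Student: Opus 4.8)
The plan is to derive Theorem~\ref{freegamma} from Theorem~\ref{freegammaaction} by exactly the same transference scheme used to deduce Theorem~\ref{main1} from Theorem~\ref{main2}, only replacing the torus $\T^d$ by the compact dual picture of $\wh{\G^d}$, i.e., working directly inside the von Neumann algebra $\wh{\G^d}$ rather than with a genuine Fourier transform. First I would record the intertwining identity produced by Theorem~\ref{freegammaaction}: the completely isomorphic embedding $\a^{Ld}:L_p(\wh\G_\infty)\hookrightarrow L_p(\wh{\G^d}\,\overline\otimes\,\wh\G_\infty)$ satisfies
\begin{equation*}
(T_m\otimes{\rm Id}_{L_p(\wh\G_\infty)})\circ\a^{Ld}=\a^{Ld}\circ M_m \quad\text{on }\com[\G_\infty],
\end{equation*}
which is immediate on the spanning elements $\l(g)$ since $\a^{Ld}(\l(g))=\l_{\G^d}(g_1,\cdots,g_d)\otimes\l(g)$ and $T_m$ multiplies the first leg by $m(g_1,\cdots,g_d)$.

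Next I would feed in the hypotheses. The assumption that $\wh\G$ is QWEP guarantees that $\wh{\G^d}$ is QWEP (finite tensor products of QWEP algebras are QWEP), hence $L_p(\wh{\G^d})$ is completely isomorphic to a quotient of a subspace of an ultrapower of $S_p$, so from $\|T_m\|_{{\rm cb}(L_p(\wh{\G^d}))}<\infty$ we upgrade to
\begin{equation*}
\big\|T_m\otimes{\rm Id}_{L_p(\wh\G_\infty)}:L_p(\wh{\G^d}\,\overline\otimes\,\wh\G_\infty)\to L_p(\wh{\G^d}\,\overline\otimes\,\wh\G_\infty)\big\|_{{\rm cb}}\les_{p}\|T_m\|_{{\rm cb}(L_p(\wh{\G^d}))};
\end{equation*}
here one uses that complete boundedness of $T_m$ on $L_p(\wh{\G^d})$ tensored with any QWEP $L_p$-space stays bounded, which is the standard consequence of the QWEP/operator-space machinery already invoked for $S_p$ in the proof of Theorem~\ref{main1} (and $\wh\G_\infty$ is QWEP because free products of QWEP algebras amalgamated over $\C$ are QWEP). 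Then, given $x\in\com[\G_\infty]$, the equivalence $\|\a^{Ld}(x)\|_{L_p(\wh{\G^d}\,\overline\otimes\,\wh\G_\infty)}\simeq_p\|x\|_{L_p(\wh\G_\infty)}$ from Theorem~\ref{freegammaaction}, combined with the intertwining identity, yields
\begin{equation*}
\|M_m(x)\|_{L_p(\wh\G_\infty)}\simeq_p\|\a^{Ld}(M_m(x))\|=\|(T_m\otimes{\rm Id})\a^{Ld}(x)\|\les_p\|T_m\|_{{\rm cb}}\,\|\a^{Ld}(x)\|\simeq_p\|T_m\|_{{\rm cb}}\,\|x\|_{L_p(\wh\G_\infty)},
\end{equation*}
so $M_m$ is bounded on $L_p(\wh\G_\infty)$; the completely bounded bound follows by the usual trick of tensoring everything with $S_p$ (i.e., replacing $m$ by the matrix-amplified symbol and noting all the ingredients above amplify) or, more directly, by running the identical argument with $L_p(B(\ell_2)\,\overline\otimes\,\wh\G_\infty)$ in place of $L_p(\wh\G_\infty)$, since $B(\ell_2)\,\overline\otimes\,\wh\G$ is still QWEP and $\a^{Ld}$ is completely isomorphic.

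The main obstacle, and the only place requiring genuine care, is the passage from cb-boundedness of $T_m$ on $L_p(\wh{\G^d})$ alone to cb-boundedness of $T_m\otimes{\rm Id}$ on $L_p(\wh{\G^d}\,\overline\otimes\,\wh\G_\infty)$: unlike the commutative case, one cannot just quote Fubini. This is exactly why the QWEP hypothesis on $\wh\G$ is imposed — it lets one realize $L_p(\wh\G_\infty)$ (and $L_p(B(\ell_2)\,\overline\otimes\,\wh\G_\infty)$) as a quotient of a subspace of an ultraproduct of matricial $L_p$-spaces, on which $T_m\otimes{\rm Id}$ is controlled by the cb-norm of $T_m$; one should spell out that QWEP is stable under free products amalgamated over $\C$ and under tensoring with $B(\ell_2)$, so all the algebras appearing are QWEP. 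Everything else is the routine transference bookkeeping already executed for Theorem~\ref{main1}, and I would present it tersely, pointing back to that proof.
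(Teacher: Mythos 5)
Your proposal is correct and follows essentially the same route as the paper: the intertwining identity $(T_m\otimes{\rm Id})\circ\a^{Ld}=\a^{Ld}\circ M_m$, the complete isomorphism from Theorem~\ref{freegammaaction}, and the cb-boundedness of $T_m\otimes{\rm Id}_{L_p(\wh\G_\infty)}$ obtained from the QWEP property of $\wh\G_\infty$ (the paper cites this as Junge's noncommutative Fubini theorem, which is precisely the ultraproduct-of-$S_p$ mechanism you describe; note it is the QWEP of the \emph{second} tensor factor $\wh\G_\infty$ that matters, not of $\wh{\G^d}$). The remaining bookkeeping and the amplification to complete boundedness match the paper's proof of Theorem~\ref{main1} verbatim.
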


 \begin{proof}
 This proof is similar to that of Theorem~\ref{main1}. As $\widehat \Gamma$ is QWEP, so is $\widehat\Gamma_\infty$. Thus $T_m\tens{\rm Id}_{L_p(\wh\G_\8)}$ is completely bounded on $L_p(\wh{\G^d}\,\overline\tens\,\wh\G_\8)$ by Junge's noncommutative Fubini theorem \cite{ju-fubini}. Let $x\in\com[\G_\8]$. Using the action $\a^{Ld}$ in Theorem~\ref{freegammaaction}, we have
  $$\|x\|_{L_p(\wh\G_\8)}\simeq_{d,p}\|\a^{Ld}(x)\|_{L_p(\wh{\G^d}\,\overline\tens\,\wh\G_\8)}\,.$$
It remains to use the interviewing formula
 $$T_m\tens{\rm Id}[\a^{Ld}(x)]=a^{Ld}[M_m(x)]$$
to conclude as in the proof of Theorem~\ref{main1}.
 \end{proof}

In the same line,  we conclude this subsection by stating  another application. Consider a family of discrete groups $\Gamma_i$, $i\in I$,  and its free product $\Gamma_\infty=*_{i\in I} \Gamma_i$; consider also  a  family of finite von Neumann algebras $(\M_i,\tau_i), i\in I$, and its von Neumann tensor product $(\M,\tau)=\overline\tens_{i\in I}(\M_i,\tau_i)$. Let  $\{m_{i,g}\}_{g\in\G_i}\subset \M_i$ for all $i\in I$, and let $M_i$ be the operator-valued Fourier multiplier on $\Gamma_i$:
 $$M_i (\lambda(g))=m_{i,g}\otimes\lambda (g),\quad g\in\G_i.$$ 
We construct a map $M^{Ld}$ similar to Theorem \ref{freegamma}. Given $g=g_{1}\cdots g_{n}\in \G_\8$ in reduced form, define
  $$ M^{Ld}(\l(g))= \left\{ \begin{array}{ll}
       m_{i_1,g_1}\otimes  \cdots \otimes m_{i_d,g_d}\otimes \l(g) & \textrm{ if }n\geq d,\\
     m_{i_1,g_1}\otimes  \cdots \tens m_{i_n,g_n}  \otimes\lambda(g) & \textrm{ if } n< d.\end{array}\right.$$
 
 \begin{thm}\label{multd}
  Let $1<p<\infty$. Then $M^{Ld}$ extends to a completely bounded Fourier multiplier from $L_p(\wh\Gamma_\8)$ to $L_p(\M\,\overline\otimes\,\wh\Gamma_\8)$ iff the family 
  $\big\{\big\|M_i\big\|_{{\rm cb}\big(L_p(\wh\Gamma_i),\, L_p(\M_i\,\overline\otimes\, \wh\Gamma_i)\big)}\big\}_{i\in I}$ is bounded. In this case, we have
   $$\big\|M^{Ld}\|_{{\rm cb}\big(L_p(\widehat\Gamma_\infty),\,L_p(\M\, \overline\otimes\,\widehat\Gamma_\infty)\big)}
   \simeq_{p,d} \sup_{i\in I} \big\|M_i\big\|_{{\rm cb}\big(L_p(\wh\Gamma_i),\, L_p(\M_i\,\overline\otimes \,\wh\Gamma_i)\big)}.$$
    \end{thm}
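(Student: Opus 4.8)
The plan is to deduce the result from Theorem~\ref{main3} (read, as in the proof of Theorem~\ref{letd}, for maps $T\colon E\to E'$) together with a comparison between two realisations of a decorated free word. The necessity of the condition is immediate: for a fixed $i_0$ the trace preserving inclusions $\widehat\Gamma_{i_0}\subset\widehat\Gamma_\infty$ and $\M_{i_0}\,\overline\otimes\,\widehat\Gamma_{i_0}\subset\M\,\overline\otimes\,\widehat\Gamma_\infty$ give completely isometric, completely complemented embeddings of $L_p$-spaces, and on $L_p(\widehat\Gamma_{i_0})$ the map $M^{Ld}$ is $M_{i_0}$ followed by the second one, so $\|M_{i_0}\|_{\rm cb}\le\|M^{Ld}\|_{\rm cb}$ for each $i_0$.

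For the converse put $c=\sup_i\|M_i\|_{{\rm cb}(L_p(\widehat\Gamma_i),L_p(\M_i\,\overline\otimes\,\widehat\Gamma_i))}$ and assume $c<\infty$. Since the value $m_{i,e}$ plays no role in $M^{Ld}$ we may take $m_{i,e}=1$, so that $M_i(\mathring{L_q(\widehat\Gamma_i)})\subset\mathring{L_q(\M_i\,\overline\otimes\,\widehat\Gamma_i)}$; and we may assume $2\le p<\infty$, the case $1<p<2$ being symmetric. As $\{\lambda(g)\}_{g\ne e}$ and $\{m_{i,g}\otimes\lambda(g)\}_{g\ne e}$ are orthogonal systems in the respective $L_2$-spaces, $\|M_i\|_{{\rm cb}(L_2)}=\sup_{g\ne e}\|m_{i,g}\|_{L_2(\M_i)}\le\sup_{g\ne e}\|m_{i,g}\|_{L_p(\M_i)}\le c$, so the $M_i$ fulfil hypotheses {\bf (H$_1$)}--{\bf (H$_2$)} with constant $c$. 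Applying Theorem~\ref{main3} with $\B=\C$, with $\A_i=\widehat\Gamma_i$ sitting inside $\A_i'=\M_i\,\overline\otimes\,\widehat\Gamma_i$, and with $T_{j,i}=M_i$ for $1\le j\le d$, the associated map extends to a completely bounded
$$T^{Ld}\colon L_p(\widehat\Gamma_\infty)\to L_p\big(\ast_{i\in I,\C}(\M_i\,\overline\otimes\,\widehat\Gamma_i)\big),\qquad \|T^{Ld}\|_{\rm cb}\lesssim_{p,d}c^{\,d},$$
with $T^{Ld}(\lambda(g))=(m_{i_1,g_1}\otimes\lambda(g_1))\cdots(m_{i_d,g_d}\otimes\lambda(g_d))\,\lambda(g_{d+1})\cdots\lambda(g_n)$ on a reduced word $g=g_1\cdots g_n$ (truncated when $n<d$).

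The remaining, and crucial, step is to show that for $x\in\com[\Gamma_\infty]$
$$\big\|T^{Ld}(x)\big\|_{L_p(\ast_{i,\C}(\M_i\,\overline\otimes\,\widehat\Gamma_i))}\ \simeq_{p,d}\ \big\|M^{Ld}(x)\big\|_{L_p(\M\,\overline\otimes\,\widehat\Gamma_\infty)},$$
and likewise after tensoring with $S_p$; together with the previous display and the necessity part this gives $\|M^{Ld}\|_{\rm cb}\simeq_{p,d}c^{\,d}$. I would prove this equivalence by imitating the recursion in the proof of Theorem~\ref{letd}, applying the length reduction formula of Theorem~\ref{redform} (iterated as in Remark~\ref{jpxb}) simultaneously to $T^{Ld}(x)$ inside $\ast_{i,\C}(\M_i\,\overline\otimes\,\widehat\Gamma_i)$, amalgamated over $\C$, and to $M^{Ld}(x)$ inside $\M\,\overline\otimes\,\widehat\Gamma_\infty=\ast_{i\in I,\M}(\M\,\overline\otimes\,\widehat\Gamma_i)$, amalgamated over $\M$ (tensoring commutes with amalgamated free products). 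The two recursions strip off the letters $g_1,\dots,g_n$ in the same order; undecorated letters ($j>d$) produce identical terms, while for a decorated letter ($j\le d$) one must match the scalar module weight $\tau_{\M_{i_j}}(m_{i_j,g}^{*}m_{i_j,h})\delta_{g,h}$ arising on the free-product side with the $\M$-valued weight $m_{i_j,g}^{*}m_{i_j,g}\delta_{g,h}$ arising on the tensor side, the key point being the convexity estimate $\big\|\sum_g|c_g|^2 m_{i,g}^{*}m_{i,g}\big\|_{L_{p/2}(\M_i)}\le(\sup_g\|m_{i,g}\|_{L_p(\M_i)})^2\sum_g|c_g|^2\le c^2\sum_g|c_g|^2$, combined with Propositions~\ref{2cb} and \ref{pcb} to pass from these weights to the full column and row modules, and the induction hypothesis on $M^{L(d-1)}$.

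The main obstacle is precisely this comparison. The algebras $\ast_{i,\C}(\M_i\,\overline\otimes\,\widehat\Gamma_i)$ and $\M\,\overline\otimes\,\widehat\Gamma_\infty$ are genuinely different --- there is no $*$-homomorphism between them either way, the copies of $\M_i$ being free in one and commuting in the other --- so the coincidence of $L_p$-norms of decorated words has to be squeezed out of the reduction formula itself, by carefully tracking how the decorations of the first $d$ letters are redistributed among the successive column and row modules and verifying that the resulting contributions agree up to constants depending only on $p$ and $d$; carrying out this bookkeeping (and its $S_p$-amplified version needed for complete boundedness) is where the real work lies.
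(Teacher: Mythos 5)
Your necessity argument and the construction of the auxiliary map $T^{Ld}$ landing in $L_p\big(\ast_{i\in I,\C}(\M_i\,\overline\otimes\,\wh\Gamma_i)\big)$ are unobjectionable. The proof breaks at the step you yourself identify as the crux: the claimed equivalence $\|T^{Ld}(x)\|\simeq_{p,d}\|M^{Ld}(x)\|$ is \emph{false}, and it fails in exactly the direction you need, namely $\|M^{Ld}(x)\|\lesssim\|T^{Ld}(x)\|$. Take $d=1$, $p>2$, $I=\{1,2\}$, $\M_1=\mathbb{M}_N$ with normalized trace, $\M_2=\C$, distinct $g_1,\dots,g_N\in\Gamma_1\setminus\{e\}$, distinct $h_1,\dots,h_N\in\Gamma_2\setminus\{e\}$, symbols $m_{1,g_j}=\sqrt{N}\,e_{j1}$, and $x=\sum_{j}\l(g_jh_j)$. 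Both $T^{L1}(x)=\sum_j(m_{1,g_j}\otimes\l(g_j))\,\l(h_j)$ and $M^{L1}(x)=\sum_j(m_{1,g_j}\otimes\l(g_j))\otimes_{\M_1}(1\otimes\l(h_j))$ are homogeneous of degree $2$ in their respective free products, so Theorem~\ref{redform} (i.e.\ \cite[Theorem B]{JPX07}) computes each norm as column plus row. The row terms coincide and equal $\|(\sum_j m_{1,g_j}m_{1,g_j}^*)^{1/2}\|_p=N^{1/2}$. The column terms do not: amalgamating over $\C$ the inner product is the scalar $\tau(m_{1,g_j}^*m_{1,g_k})=\delta_{jk}$, giving $N^{1/2}$, whereas amalgamating over $\M_1$ it is the operator $m_{1,g_j}^*m_{1,g_k}=N e_{11}\delta_{jk}$, giving $\|(N^2e_{11})^{1/2}\|_p=N^{1-1/p}$. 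Hence $\|M^{L1}(x)\|_p\simeq_p N^{1-1/p}$ while $\|T^{L1}(x)\|_p\simeq_p N^{1/2}$, and the ratio $N^{1/2-1/p}$ is unbounded. (This does not contradict the theorem, since here $\sup_i\|M_i\|_{\rm cb}\gtrsim N^{1/2-1/p}$ as well; it only contradicts your intermediate equivalence, whose constants were to depend on $p,d$ alone.)

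The obstruction is structural, not a matter of bookkeeping: in the scalar free product a decoration survives the length reduction only through the traces $\tau(m^*m')$, while in $\M\,\overline\otimes\,\wh\Gamma_\infty=\ast_{i\in I,\M}(\M\,\overline\otimes\,\wh\Gamma_i)$ it survives as the operator $m^*m'\in\M$; passing to the trace loses information that cannot be recovered, so the tensor-side norm genuinely dominates the free-product-side norm by an arbitrarily large factor. Your convexity estimate only bounds the $\M$-valued column weight from above against the \emph{undecorated} weight, with scalar coefficients $|c_g|^2$ (in the recursion these coefficients are elements of $L_p(\A)$, where the triangle inequality in $L_{p/2}$ is far too lossy), and in any case an upper bound cannot substitute for the reverse comparison your argument requires. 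The route the paper intends is different: as in Theorem~\ref{freegammaaction}, one first relocates the first $d$ letters into fresh free copies inside $\ast_i\,\Gamma_i^{*(d+1)}$ using the cyclic-permutation $*$-automorphisms — a step to which Theorem~\ref{main3} applies losslessly because these are trace-preserving representations — and only afterwards applies the operator-valued symbols, working throughout inside algebras of the form $\M\,\overline\otimes\,(\cdot)$ so that the $\M$-valued module structure is never traded for a scalar one.
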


 \begin{proof}
  We can easily adapt the proofs of Theorem~\ref{freegammaaction}  to the present setting, so we omit the details.
 \end{proof}


\appendix
\section{Endpoint boundedness of free Hilbert transforms}


Our arguments rely on the $L_p$-boundedness ($1<p<\8$) of the free Hilbert transforms $H_\e$ in Lemma \ref{MR17} and their variants $H_\e^{(j)}$ in Corollary~\ref{letHd}.  We will discuss their bounds on homogeneous polynomials when $p=\infty$, since they cannot be bounded in full generality at the end point. It is also natural to ask if one
can get an $L_\infty$-BMO boundedness for the BMO spaces studied in \cite{JM12}.
 
  \subsection{Bounds on homogeneous polynomials}
 
We work in the setting of amalgamated free products of von Neumann algebras as in section~\ref{Multipliers on amalgamated  free products}.   Let $H_\e^{(j)}$ be the maps introduced  in Corollary~\ref{letHd} with $\e$ a family of signs.

 The case  $j=1$ of the following theorem  follows from \cite[Proposition 2.8]{JPX07}.
   
  \begin{thm} 
   Let $d\ge1$ and $1\le j\le d$. Then for any $x\in\m W_d$ we have
  $$
 \|H_\e^{(j)}(x) \|_\infty\lesssim  \min\big\{\log(j+2),\,\log (d-j+2)\big\}\|x\|_\infty.
 $$
  \end{thm}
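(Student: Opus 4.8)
The statement concerns a single homogeneous layer $\m W_d$, so the natural strategy is to reduce the estimate for $H_\e^{(j)}$ acting on $\m W_d$ to the known $j=1$ case, which is \cite[Proposition 2.8]{JPX07} applied after suitably rebracketing the free product. The point is that $H_\e^{(j)}$ multiplies each elementary tensor $a_1\tens\cdots\tens a_d\in\m W_{\un i}$ by the sign $\e_{j,i_j}$, i.e.\ it only sees the $j$-th block. I would first use associativity of the amalgamated free product to group the first $j-1$ letters, the $j$-th letter, and the last $d-j$ letters: writing $\A=\A_{<}\,*_\B\,(\,*_{i\in I}\A_i\,)\,*_\B\,\A_{>}$ in a way that makes an element of $\m W_d$ look, up to the centering projections, like a homogeneous word of length $3$ (or length $2$ if $j=1$ or $j=d$) over a coarser free product, with the middle ``letter'' living in the algebra $*_{i\in I}\A_i$. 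Under this regrouping $H_\e^{(j)}$ becomes exactly a first-block (or last-block, depending on which side we count from) sign-multiplier $H_{\e'}^{(1)}$ on the coarser free product, whose $L_\infty$-bound on homogeneous words of fixed degree is $\lesssim\log(\text{degree}+2)$ by the $j=1$ case. Since the coarse degree is at most $3$, this alone only gives an $O(1)$ bound; the logarithmic growth in $j$ must come from a more careful bookkeeping of how many blocks sit on the side we are collapsing.

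The cleaner route, which I expect actually yields the stated bound, is iterative. Think of $H_\e^{(j)}$ as a telescoping composition: the projection onto words whose first $j$ blocks are fixed, tensored with the one-block sign on the $j$-th slot. More precisely, I would exploit that, on $\m W_d$, the operator $H_\e^{(j)}$ factors through the ``remove the first $j-1$ letters'' operation. Concretely, after regrouping the first $j-1$ letters into a single block over a coarse free product, $H_\e^{(j)}$ acts as $H^{(2)}$ (the second-block sign multiplier) on words of coarse length $d-j+2$; dually, regrouping the last $d-j$ letters, it acts as $H^{(2)}$ on words of coarse length $j+1$ counted from the other end. So it suffices to prove the single estimate: the second-block sign multiplier $H^{(2)}$ on homogeneous words of length $\ell$ has $L_\infty$-norm $\lesssim\log(\ell+2)$, applied once with $\ell=d-j+2$ and once with $\ell=j+1$, then take the minimum. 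To get the $\log\ell$ bound for $H^{(2)}$ on $\m W_\ell$, I would use the decomposition $H^{(2)}=\sum_{k}(\text{dyadic block } k)$ indexed by the position/size of the first block, together with the $j=1$ result $\|H^{(1)}\|_{L_\infty\to L_\infty}\lesssim 1$ on each fixed-first-letter slice and the logarithmic cost of summing over $\log\ell$ many dyadic scales — mimicking exactly how \cite[Proposition 2.8]{JPX07} produces its $\log$ factor for the plain free Hilbert transform.

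\textbf{Main obstacle.} The delicate point is the very last reduction: one has to be sure that collapsing the first $j-1$ (or last $d-j$) letters into a coarse block is compatible with the $L_\infty$-norm and with the sign structure — i.e.\ that the coarse algebra $*_{i_1\neq\cdots\neq i_{j-1}}(\A_{i_1}*_\B\cdots*_\B\A_{i_{j-1}})$, with its induced trace and conditional expectation onto $\B$, really realizes $\m W_d$ isometrically as homogeneous words of the asserted coarse length, and that $H_\e^{(j)}$ transports to a bona fide sign multiplier there (this uses that the $\e_{j,i}$ live in $\m Z(\B)$ and that the centering is associative, both of which are built into the construction in Section~\ref{Preliminaries}). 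Once that identification is in place, quoting the $j=1$ case and the dyadic-in-scale summation is routine. The $\min$ in the conclusion is then automatic: run the argument collapsing whichever side is shorter, costing $\log$ of (one plus) the length of the \emph{other} side.
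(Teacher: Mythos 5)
There is a genuine gap, and it sits at the very first step of both of your routes. ``Associativity of the free product'' lets you regroup the \emph{family of algebras} $(\A_i)_{i\in I}$ according to a partition of the index set $I$ (this is exactly how the paper uses it in the proof of Theorem~\ref{redform}); it does \emph{not} let you group \emph{consecutive letters of a reduced word} by position. The first $j-1$ letters of a word in $\m W_d$ are drawn from the very same algebras $\A_i$ as the $j$-th letter and the last $d-j$ letters (only consecutive indices must differ), so there is no von Neumann subalgebra $\A_<\subset\A$ containing all length-$(j-1)$ words and free from $*_{i\in I}\A_i$ over $\B$; the coarse decomposition $\A_<*_\B(*_i\A_i)*_\B\A_>$ you want to quote the $j=1$ case on does not exist, and $H_\e^{(j)}$ cannot be transported to a first- or second-block multiplier on it. Two internal warning signs: (a) if the ``coarse degree $\le 3$'' reduction were legitimate it would give an $O(1)$ bound uniform in $j$ and $d$ -- \emph{stronger} than the theorem, not weaker, contrary to your remark that it ``only gives $O(1)$''; (b) your fallback estimate, that the second-block multiplier on coarse words of length $\ell$ costs $\log(\ell+2)$ via ``summing over $\log\ell$ dyadic scales,'' is asserted rather than proved -- there is no dyadic structure anywhere in the problem, and \cite[Prop.~2.8]{JPX07} does not produce a logarithm on homogeneous words (the $j=1$ case of the theorem is an $O(1)$ bound, $\min\{\log 3,\log(d+1)\}$). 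Even the salvageable version of your idea -- treating the first $j-1$ letters as an element of the Hilbert module $\m W_{j-1}$ and invoking the free Khintchine inequality at $p=\infty$ -- would cost a constant growing polynomially in $d$, far worse than $\log$.

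The paper's proof is entirely different and never regroups letters. One tests $H_\e^{(j)}(x)$ against $y\in L_2(\A)$, using $\|a\|_\infty=\sup_{\|y\|_2\le1}\|ay\|_2$, and decomposes the product by the number of cancelled letters: $xy=\sum_{k=0}^{2d}x\bot^k y$, where $x\bot^k y=\E_z\big[z^{d-k}U_{\bar z}(xU_z(y))\big]$ is extracted by averaging against the gauge action $U_z w=z^nw$ ($w\in\m W_n$). The commutation rule $H_\e^{(j)}(x)\bot^k y=H_\e^{(j)}(x\bot^k y)$ for $k\le 2(d-j)$ and $H_\e^{(j)}(x)\bot^k y=x\bot^k H_\e^{(d+1-j)}(y)$ for $k>2(d-j)$ moves the transform (an isometry of $L_2$) outside each of the two partial sums, and the estimate
$$\Big\|\sum_{k=0}^{n}x\bot^{k} y\Big\|_2+\Big\|\sum_{k=n}^{2d}x\bot^{k} y\Big\|_2\lesssim \min\big\{\log(n+2),\log(2d-n+2)\big\}\,\|x\|_\infty\|y\|_2$$
comes from the $L_1(\T)$-norm of the Dirichlet kernel $\sum_{k=0}^n z^k$. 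That is where both the logarithm and the minimum actually originate; neither is visible from the regrouping picture you propose.
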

 
\begin{proof}  For $z\in \mathbb T$, let $U_z$ be the unitary on $L_2({\mathcal A})$ sending   $w\in{\mathcal W}_n$ to $z^nw$ for $n\ge0$. Given $x\in {\mathcal W}_d,y \in L_2({\mathcal A})$ and $0\leq k\leq 2d$, let 
 $$x\bot^{k} y=\E_z [z^{d-k}U_{\overline{z}}(xU_z(y))].$$
 It  is easy to verify that
  $$xy=\sum_{k=0}^{2d} x\bot^{k}y \;\;\text {and }\;\; \|x\bot^{k} y\|_2\leq \|x\|_\infty\|y\|_2.$$
Moreover, we easily show 
 $$H_\e^{(j)}(x)\bot^{k}y
  = \left\{\begin{array}{lcl}
  H_\e^{(j)}(x\bot^{k}y) & \textrm{if}& k\leq 2(d-j),\\
  x\bot^{k} H_\e^{(d+1-j)}(y)& \textrm{if}& k> 2(d-j).\end{array}\right. $$
Thus
  $$H_\e^{(j)}(x) y=\sum_{0\leq k\leq 2(d-j)}H_\e^{(j)}(x\bot^{k}y)+\sum_{ 2(d-j)< k\leq 2d}x\bot^{k} H_\e^{(d+1-j)}(y).$$
  On the other hand, using the elementary estimate
 $$\big\|\sum_{k=0}^n z^k\big\|_{L_1(\T)}\simeq \log(n+2),$$
 we get
  $$\big\|\sum_{k=0}^{n}x\bot^{k} y\big\|_2 + \big\|\sum_{k=n}^{2d}x\bot^{k} y\big\|_2\lesssim \min\big\{\log (n+2),\,\log (2d-n+2)\big\}  \|x\|_\infty\|y\|_2. $$
Therefore, since  $H_\e^{(j)}$ is isometric on $L_2(\A)$, we deduce
 \begin{align*}
 \big\|H_\e^{(j)}(x) y\big\|_2
 &\leq \big\|\sum_{k=0}^{ 2(d-j)}x\bot^{k}y\big\|_2+\big\|\sum_{k=2(d-j)+1}^{ 2d}x\bot^{k}H_\e^{(d+1-j)}(y)\big\|_2\\
 &\lesssim\min\big\{\log(j+2),\,\log (d-j+2)\big\}\|x\|_\infty\|y\|_2,
\end{align*}
whence  the desired estimate on $\|H_\e^{(j)}(x)\|_\8$.
\end{proof}

 \subsection{Failure of the $L_\infty$-BMO boundedness}
  
Let us  restrict ourselves to the free group case.  Recall that  the Poisson semigroup $(S_t)_{t\geq 0}$ on $\F_\infty$ is the normal unital completely positive semigroup given by
  $$S_t(\lambda(g))=e^{-t|g|}\lambda(g),\quad g\in\F_\infty.$$
We also recall the definitions  of various ${\rm BMO}$-spaces according to \cite{JM12}. As usual we denote by $L_p^0(\wh \F_\infty)$ the subspace of centered elements  (i.e., elements with vanishing trace) in  $L_p(\wh\F_\infty)$. Define
 \begin{align*}
{\rm BMO}^c(S)&=\big\{x\in L^0_2(\wh\F_\infty): \| x\|_{{\rm BMO}_{S}} <\infty\big\}, \\
 {\rm bmo}^c(S)&=\big\{x\in L^0_2(\wh\F_\infty): \| x\|_{{\rm bmo}_{S}}<\infty\big\},
\end{align*}
where
 \begin{align*}
 \| x\|_{{\rm BMO}^c(S)}
 &= \sup_{t>0} \big\|S_t\big[|x- S_t(x)|^2\big]\big\|_\infty^{1/2},\\
 \| x\|_{{\rm bmo}^c(S)}
 &= \sup_{t>0} \big\|S_t(|x|^2)-|S_t(x)|^2)\big\|_\infty^{1/2}.
 \end{align*}
Similarly, we define the row versions ${\rm BMO}^r(S)$ and ${\rm bmo}^r(S)$ by passing to adjoints. One of the main results of \cite{JM12} states that the intersection space ${\rm BMO}^c(S)\cap {\rm BMO}^r(S)$ behaves well with complex interpolation, i.e., it replaces $L_\8$ as an endpoint space in the complex interpolation scale $\big\{L_p(\wh\F_\8)\big\}_{p>1}$.

\medskip

Let $H_\e$ denote the free Hilbert transform of the first letters associated to a sequence of signs, see Lemma~\ref{MR17}. It is easy to see that $\|H_\varepsilon(x)\|_{{\rm bmo}^c(S)}=\|x\|_{{\rm bmo}^c(S)}$ for $x\in \mathbb C[\F_\infty]$. We will explain  why one cannot hope the boundedness of $H_\e$ from $L_\infty(\wh\F_\8)=\wh\F_\8$ to  any of  ${\rm BMO}^r(S)$, ${\rm BMO}^c(S)$ or ${\rm bmo}^r(S)$.

 \begin{lemma}
Let  $a,b\in\F_\8$ be two free elements. Let $z$ be a finite sum $z=\sum_{k\ge1} c_k\lambda_{a^k}$ and $z_{n}=z\l(b^{n})$. Then
\begin{enumerate}[\rm i)]
\item $\displaystyle \lim_{n\to \infty} \|z_n\|_{{\rm bmo}^r(S)}=\|z\|_\infty$,
\item $\displaystyle (e^{-1}-e^{-2})\|z\|_\infty\leq \limsup_{n\to \infty} \|  x_n\|_{{\rm BMO}^\a(S)}\leq 2\|z\|_\infty,\; \a\in\{c, r\}.$ 
\end{enumerate}
 \end{lemma}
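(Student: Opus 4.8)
The plan is to exploit the fact that $a$ and $b$ are free, so that $z_n = z\lambda(b^n)$ is a word whose first block is controlled by $z$ and whose tail is $b^n$, and to compute the relevant $\mathrm{BMO}$ quantities by letting $n\to\infty$ so that the Poisson weights $e^{-t|g|}$ all collapse to the single scale where $t$ matches the long $b^n$ tail. First I would record the elementary norm identities: since $a,b$ are free, $\|z_n\|_\infty = \|z\|_\infty$ (right translation by a unitary is isometric on $L_\infty$), and for $g$ appearing in $z$, $\lambda(g)\lambda(b^n)$ has length $|g|+n$, so $S_t(z_n) = \sum_k c_k e^{-t(|a^k|+n)}\lambda_{a^k b^n} = e^{-tn}\sum_k c_k e^{-tk}\lambda_{a^kb^n}$ (taking $a$ to be a single generator so $|a^k|=k$; the general case only changes constants).

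For part i), the idea is that $\|z_n\|_{\mathrm{bmo}^r(S)}^2 = \sup_{t>0}\big\| S_t(z_nz_n^*) - S_t(z_n)S_t(z_n)^*\big\|_\infty$. Compute $z_nz_n^* = z\lambda(b^n)\lambda(b^{-n})z^* = zz^*$, a fixed element independent of $n$, so $S_t(z_nz_n^*)=S_t(zz^*)$; as $t\to 0$ this tends to $zz^*$ and $\|zz^*\|_\infty = \|z\|_\infty^2$. The correction term $S_t(z_n)S_t(z_n)^*$ has norm at most $\big(\sum_k|c_k|e^{-tk}e^{-tn}\big)^2 \le e^{-2tn}\|z\|_1^2$-type bound, which is killed uniformly once we take $t\ge \delta$ for fixed $\delta$; and for $t\le\delta$ small, $S_t(zz^*)\to zz^*$ while the correction is bounded by $e^{-2tn}(\sum|c_k|)^2\to$ something we must argue goes to $0$ — here is the point where the $n\to\infty$ limit is used: choosing $t = t_n\to 0$ slowly enough that $t_n n\to\infty$ makes $S_{t_n}(zz^*)\to zz^*$ while $e^{-t_n n}\to 0$ kills the correction. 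This gives $\liminf_n\|z_n\|_{\mathrm{bmo}^r(S)}\ge \|z\|_\infty$, and the reverse inequality $\|z_n\|_{\mathrm{bmo}^r(S)}\le\|z_n\|_\infty=\|z\|_\infty$ is the standard estimate $\|x\|_{\mathrm{bmo}(S)}\le\|x\|_\infty$.

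For part ii), the upper bound $\limsup_n\|z_n\|_{\mathrm{BMO}^\alpha(S)}\le 2\|z\|_\infty$ is again the general inequality $\|x\|_{\mathrm{BMO}(S)}\le 2\|x\|_\infty$ (valid for Poisson-type semigroups; expand $|x-S_t x|^2$ and use $\|S_t\|=1$). The lower bound is where the work lies: I would fix the test scale $t$ to be comparable to $1/n$, say evaluate $\|S_t[|z_n - S_t z_n|^2]\|_\infty$ at $t = 1/n$ (or along a subsequence). Then $S_{1/n}(z_n)$ has each coefficient multiplied by $e^{-(k+n)/n} = e^{-1}e^{-k/n}\to e^{-1}$ as $n\to\infty$, so $z_n - S_{1/n}z_n$ has coefficients $c_k(1 - e^{-1}e^{-k/n})\to c_k(1-e^{-1})$, i.e. $z_n - S_{1/n}z_n \approx (1-e^{-1}) z_n$ in the relevant sense as $n\to\infty$; applying $S_{1/n}$ once more and evaluating the operator norm, using that $\|S_t(yy^*)\|_\infty \ge \tau(\cdot)$-type lower bounds or more simply testing against the vacuum-like vector, produces a factor roughly $e^{-1}(1-e^{-1})^2\cdot\|z\|_\infty^2$; matching with the stated constant $(e^{-1}-e^{-2})^2$ suggests one should instead lower-bound $\|S_t[|z_n-S_tz_n|^2]\|_\infty \ge \|S_t(z_n - S_tz_n)(z_n-S_tz_n)^* \|$ evaluated suitably, giving $\|z\|_\infty \cdot e^{-1}\cdot(1-e^{-1})= (e^{-1}-e^{-2})\|z\|_\infty$ at the level of the square root. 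The main obstacle will be making the "$\approx$" rigorous: one must control the off-diagonal terms $e^{-k/n}$ and show the perturbation from the $k$-dependence of these exponentials is $o(1)$ as $n\to\infty$ uniformly over the (finitely many) $k$, and correctly identify which operator-norm lower bound for $\|S_t(ww^*)\|_\infty$ to use — most cleanly, evaluate at the state $\tau(\,b^{-n}\cdot b^{n})$ or apply $S_t$ explicitly and restrict to the single Fourier coefficient, exploiting freeness of $a$ and $b$ so that all the words $a^k b^n$ are distinct and of the same length $k+n$. The row/column symmetry for the $\mathrm{BMO}^\alpha$ case follows by the same computation since $z_n^* = \lambda(b^{-n})z^*$ has an analogous structure on the other side.
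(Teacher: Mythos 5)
Your proposal follows essentially the same route as the paper: part i) via $z_nz_n^*=zz^*$ combined with $\|S_t(z_n)\|_\infty\to 0$, and part ii) via the Kadison--Schwarz inequality, which reduces the lower bound to $\|(S_t-S_{2t})z_n\|_\infty$ at $t=1/n$, tending to $(e^{-1}-e^{-2})\|z\|_\infty$. The only point to tighten in ii) is the very last step: do not evaluate at a state (that only controls a single Fourier coefficient and would yield $|c_k|$ rather than $\|z\|_\infty$); instead multiply $(S_{1/n}-S_{2/n})(z_n)$ on the right by the unitary $\lambda(b^{-n})$, which is isometric on $L_\infty$, and pass to the limit of the resulting finite sum — exactly as your own remark on right translation already suggests.
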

 
 \begin{proof}
i) It is clear that 
 $$\|z_n\|_{{\rm bmo}^r(S)}\leq \|z_n\|_\infty=\|z\|_\infty.$$ 
For a fixed $t>0$, we have $\lim \|S_t(z_n)\|_\infty=0$. Thus, 
 $$\limsup_{n\to\8} \| z_n\|_{{\rm bmo}^r(S)}^2\geq \sup_t \big\|S_t(z_nz_n^*)\big\|_\infty=\|z\|_\infty^2.$$
   
ii) The upper bound is clear. For the lower, we use the Kadison inequality to ensure
      $$\big\|S_t|z_n^*-S_t(z_n^*)|^2\big\|_\8, \;\big\|S_t|z_n-S_t(z_n)|^2\big\|_\8\geq \big\| (S_t-S_{2t})z_n\big\|_\8^2.$$ 
But in $\wt\F_\infty$,
 $$\lim_{n\to\infty} \big((S_{\frac1n}-S_{\frac2n})z_n\big)\lambda_{b^{-n}}= (e^{-1}-e^{-2})z.$$
This finishes the proof.
 \end{proof}
  
As $H_\varepsilon(z_n)=H_\varepsilon(z)\lambda_{b^n}$, we get
\begin{corollary}
 The transform $H_\varepsilon$ is unbounded from $L_\infty(\wh \F_\8)$ to any of  ${\rm BMO}^r(S)$, ${\rm BMO}^c(S)$ and ${\rm bmo}^r(S)$.
\end{corollary}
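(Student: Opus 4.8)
The plan is to derive the corollary from the preceding lemma, the one additional ingredient being the fact --- recorded at the start of this appendix --- that $H_\varepsilon$ is \emph{not} bounded on $L_\infty(\wh\F_\infty)$ itself. Since $\|y\|_{{\rm bmo}^r(S)}\lesssim\|y\|_\infty$ and $\|y\|_{{\rm BMO}^\alpha(S)}\lesssim\|y\|_\infty$, boundedness of $H_\varepsilon$ from $L_\infty$ into any of the three target spaces would follow from $L_\infty$-boundedness; so it is enough to produce, for each $N$, a polynomial with $\|\cdot\|_\infty=1$ whose $H_\varepsilon$-image has $Y$-norm at least $N$, for $Y\in\{{\rm bmo}^r(S),{\rm BMO}^c(S),{\rm BMO}^r(S)\}$. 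Using that $H_\varepsilon$ is not $L_\infty$-bounded, fix a polynomial $z\in\com[\F_\infty]$ with $\|z\|_\infty=1$ and $\|H_\varepsilon(z)\|_\infty$ as large as we please; since $z$ involves only finitely many generators we may also assume that some generator $b$ occurs in no reduced word of $z$ (hence in no reduced word of $H_\varepsilon(z)$, which has the same support).

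The construction is then to translate $z$ far out along $b$: put $z_n=z\lambda(b^n)$. Right multiplication by $\lambda(b^n)$ is a unitary, so $\|z_n\|_\infty=\|z\|_\infty=1$ for all $n$; and since $b$ does not appear in $z$, multiplying on the right by $\lambda(b^n)$ leaves the first block of each reduced word unchanged, whence $H_\varepsilon(z_n)=H_\varepsilon(z)\lambda(b^n)$. Now apply the conclusion of the lemma with the single-generator polynomial in its statement replaced by the arbitrary polynomial $w:=H_\varepsilon(z)$: its proof goes through verbatim, the only facts used being that $\big(w\lambda(b^n)\big)\big(w\lambda(b^n)\big)^{*}=ww^{*}$ is independent of $n$, that $\|S_t(w\lambda(b^n))\|_\infty\to0$ as $n\to\infty$ for each fixed $t>0$ (because $|gb^n|=|g|+n\to\infty$), and --- for the ${\rm BMO}$ estimate --- the Kadison inequality applied to $(S_{1/n}-S_{2/n})(w\lambda(b^n))$, whose norm tends to $(e^{-1}-e^{-2})\|w\|_\infty$. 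This yields
$$\lim_{n\to\infty}\big\|H_\varepsilon(z_n)\big\|_{{\rm bmo}^r(S)}=\|H_\varepsilon(z)\|_\infty ,\qquad \limsup_{n\to\infty}\big\|H_\varepsilon(z_n)\big\|_{{\rm BMO}^\alpha(S)}\ \ge\ (e^{-1}-e^{-2})\,\|H_\varepsilon(z)\|_\infty\quad(\alpha\in\{c,r\}).$$

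Finally, for each $N$ one first chooses $z$ as above with $\|H_\varepsilon(z)\|_\infty\ge 3N$, and then $n=n(N)$ large enough that $\|H_\varepsilon(z_n)\|_Y\ge N$ for the relevant $Y$; the polynomials $x_N:=z_n$ satisfy $\|x_N\|_\infty=1$ and $\|H_\varepsilon(x_N)\|_Y\to\infty$, so $H_\varepsilon$ is unbounded from $L_\infty(\wh\F_\infty)$ into ${\rm bmo}^r(S)$, ${\rm BMO}^c(S)$ and ${\rm BMO}^r(S)$. The main obstacle is concentrated entirely in the input that $H_\varepsilon$ fails to be $L_\infty$-bounded; granting that, the lemma does the soft job of inflating a single bad polynomial into one that is also bad for the row (and column) BMO norms --- a phenomenon which, by contrast, cannot occur for ${\rm bmo}^c(S)$, on which $H_\varepsilon$ is an isometry.
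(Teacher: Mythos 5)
Your proof follows the same route as the paper's (which consists of the single remark that $H_\varepsilon(z_n)=H_\varepsilon(z)\l(b^n)$ followed by an appeal to the lemma), and you have correctly identified and supplied the two things that remark leaves implicit. First, the lemma as stated only concerns $z=\sum_{k\ge1}c_k\l(a^k)$, and on such elements $H_\varepsilon$ acts as multiplication by the \emph{single} sign attached to the free factor containing $a$ (the sign depends only on the index of the first block, not on the exponent $k$); so the lemma applied literally to its own $z$ cannot produce any blow-up, and one really does need your generalization to an arbitrary polynomial $w$ together with a generator $b$ absent from $w$. Your verification that the lemma's proof survives this generalization --- the three facts $w\l(b^n)\,(w\l(b^n))^*=ww^*$, $\|S_t(w\l(b^n))\|_\infty\to0$ for fixed $t$, and $\big((S_{1/n}-S_{2/n})(w\l(b^n))\big)\l(b^{-n})\to(e^{-1}-e^{-2})w$ --- is exactly what is needed and is correct.

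The one point you should treat with more care is the input on which everything now rests: the unboundedness of $H_\varepsilon$ on $L_\infty(\wh\F_\8)$ itself. This is asserted in the opening sentence of the appendix but is proved nowhere in the paper, and it is not a soft fact: the appendix's own theorem shows that $H_\varepsilon=H_\varepsilon^{(1)}$ is bounded on each homogeneous component $\m W_d$ with a constant \emph{independent of} $d$ (the bound $\log(1+2)$), so any witness to $L_\infty$-unboundedness must exploit interference between different degrees and cannot be produced degree by degree. You should therefore either cite an explicit external source establishing the failure of $L_\infty$-boundedness of the first-letter decomposition (e.g.\ the endpoint discussion in \cite{MR17}), or give an independent argument --- and in doing so make sure that the cited proof does not itself proceed by showing unboundedness into a BMO-type space, which would render the present argument circular. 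Granting that input, your deduction is complete and, if anything, more careful than the paper's.
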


\bigskip

{\bf Acknowledgements.}  The first author is partially supported by NSF award DMS-1700171. The second and third authors are partially supported by the French ANR project No. ANR-19-CE40-0002, and the  third author is also partially supported by  the Natural Science Foundation of China (No.12031004).

\bigskip


\end{document}